\numberwithin{equation}{section}
\def\s{$\mbox{\c{s}}$}
\def\NZQ{\varmathbb}
\def\N{{\NZQ N}}
\def\R{{\NZQ R}}
\def\opn#1#2{\def#1{\operatorname{#2}}} % to make operators
\opn\Ker{Ker} \opn\Coker{Coker}  \opn\Hom{Hom} \opn\Im{Im}
\opn\End{End} \opn\Aut{Aut} \opn\defect{def} \opn\ord{ord}
\opn\id{id} \opn\dim{dim} \opn\det{det} \opn\tr{tr} \opn\grad{grad} \opn\lcm{lcm}
\opn\min{min} \opn\max{max} 
\opn\Span{Span}   \opn\rang{rang}  \opn\id{id} \opn\Ass{Ass} \opn\Min{Min}
\opn\GL{GL} \opn\SL{SL} \opn\mod{mod} \opn\diag{diag}
\opn\min{min} \opn\sgn{sgn} \opn\ini{in_<}  \opn\Mon{Mon} \opn\LC{LC_<} \opn\Hom{Hom} \opn\Ext{Ext} \opn\gini{gin_{<_{rev}}} \opn\gin{gin_{<}}
\opn\LT{LT_<}
\opn\s{supp} \opn\Tor{Tor} \opn\link{link} \opn\depth{depth} \opn\pd{pd} \opn\reg{reg} %\opn\char{char}
\newcommand{\der}{\textup{d}}
\date{}
\title{On an inhomogeneous coagulation model with a differential sedimentation kernel}
\author[a,1]{Iulia Cristian}
\author[a,2]{Barbara Niethammer}
\author[a,3]{Juan J. L. Vel\'{a}zquez}
\affil[a]{Institute for Applied Mathematics, University of Bonn, Endenicher Allee 60, 53115 Bonn, Germany}
\affil[1]{\href{mailto:cristian@iam.uni-bonn.de}{cristian@iam.uni-bonn.de}}
\affil[2]{\href{mailto:niethammer@iam.uni-bonn.de}{niethammer@iam.uni-bonn.de}}
\affil[3]{\href{mailto:velazquez@iam.uni-bonn.de}{velazquez@iam.uni-bonn.de}}
\begin{document}
\maketitle
\begin{abstract}
We study an inhomogeneous coagulation equation that contains a transport term in the spatial variable modeling the sedimentation of clusters. We prove local existence of mass conserving solutions for a class of coagulation kernels for which in the space homogeneous case instantaneous gelation (i.e., instantaneous loss of mass) occurs. Our result holds true in particular for sum-type kernels of homogeneity greater than one, for which solutions do not exist at all in the spatially homogeneous case. Moreover, our result covers kernels that in addition vanish on the diagonal, which have been used to describe the onset of rain and the behavior of air bubbles in water.
%We study an inhomogeneous kinetic model that describes rain initiation times or the behavior of air bubbles in water. It involves a coagulation term with a so called differential sedimentation kernel with  homogeneity larger than one and which vanishes on the diagonal. In the spatial variable it contains  a transport term modeling the sedimentation of clusters. For the homogeneous version of the model we prove that there is instantaneous gelation, more precisely, that there is no mass conserving solution for any positive time.
%For the inhomogeneous model  on the other hand we prove the existence of mass conserving  solutions for short times.Our proof also includes the case of sum-type kernels of homogeneity larger than one, for which non-existence of solutions (independently whether mass-conservation happens) has been proven in the homogeneous case.

%This in particular includes the case of power law kernels with homogeneity larger than one for which in the homogeneous case such solutions do not exist.
%This is to our knowledge the first result involving existence for coagulation models with sum kernels of homogeneity greater than one.

\end{abstract}

\textbf{Keywords:} inhomogeneous coagulation equations, mass-conserving solutions, sum-type kernels.

\tableofcontents
%%%%%%%%%%%%%%%%%%%%%%%%%%%%%%%%%%%%%%%%%%%%%%%%%%%%%%%%%%%%%%%%%%%%
%%%%%%%%%%%%%%%%%%%%%%%%%%%%%%%%%%%%%%%%%%%%%%%%%%%%%%%%%%%%%%%%%%%%
\newtheorem{teo}{Theorem}[section]
\newtheorem{ex}[teo]{Example}
\newtheorem{prop}[teo]{Proposition}
\newtheorem{obss}[teo]{Observations}
\newtheorem{cor}[teo]{Corollary}
\newtheorem{lem}[teo]{Lemma}
\newtheorem{prob}[teo]{Problem}
\newtheorem{conj}[teo]{Conjecture}
\newtheorem{exs}[teo]{Examples}
\newtheorem{alg}[teo]{\bf Algorithm}
\newtheorem{assumption}[teo]{Assumption}
\theoremstyle{definition}
\newtheorem{defi}[teo]{Definition}

\theoremstyle{remark}
\newtheorem{rmk}[teo]{Remark}
\newtheorem{ass}[teo]{Assumption}

\parindent0mm
%%%%%%%%%%%%%%%%%%%%%%%%%%%%%%%%%%%%%%%%%%%%%%%%%%%%%%%%%%%%%%%%%%%

%%%%%%%% definirea functiei

%%%%%% printarea pe A4, stabilirea marginilor

%%%%%%%%%%%%%%

\section{Introduction}
\subsection{Background}

In \cite{physicspaper} the authors suggest a space-dependent coagulation equation to model the onset of rain. Here spherical particles of volume $v$ move in space vertically, for example due to gravitation, and merge when their trajectories cross. This leads to the following inhomogeneous coagulation equation for the density $f$ of  particles of size $v$ at the point $x$:
\begin{align}\label{rain and air bubbles}
    \partial_{t}f(x,v,t)+v^{\frac{2}{3}}\partial_{x}f(x,v,t)&=\frac{1}{2}\int_{(0,v)}K(v-v',v')f(x,v-v',t)f(x,v',t)\der v'\nonumber\\
    &\quad -\int_{(0,\infty)}K(v,v')f(x,v,t)f(x,v',t)\der v'
\end{align}
with a  so-called differential sedimentation  kernel of the form
\begin{align}\label{rain kernel}
    K(v,v')=|v^{\frac{2}{3}}-v'^{\frac{2}{3}}|(v^{\frac{1}{3}}+v'^{\frac{1}{3}})^{2}.
\end{align}
This choice of kernel is motivated by the following consideration (see \cite{physicspaper}):  the cross-section of interaction between two particles of radii $r$ and $r'$, and volume $v$ and $v'$, respectively, that merged upon touching is given by  $\pi(r+r')^{2}\approx (v^{\frac{1}{3}}+v'^{\frac{1}{3}})^{2}$.

Additionally, the velocity is approximately $v^{\frac{2}{3}}$, which represents the Stokes velocity of a rigid sphere with no slip boundary condition, and the collision rate between two particles is taken to be proportional to their relative velocities $|v^{\frac{2}{3}}-v'^{\frac{2}{3}}|$.

%\begin{center}
%\begin{tikzpicture}
 %\draw[fill=none](0,0) circle (1.0) node [black,yshift=-2cm] {Fig 1. Particles of volume $v$ and $v'$ with velocities $u$ and $u'$, respectively};
   % \draw[fill=black](0,0) circle (1 pt) node [left] {};
    %  \draw[->]        (0,0)   -- (0,2) node [above,left] {$\scriptstyle u\approx v^{\frac{2}{3}}$};
    %     \draw[fill=none](3,0) circle (0.5) node [black,yshift=-1.5cm] {};
   % \draw[fill=black](3,0) circle (1 pt) node [left] {};
  %  \draw[->]        (3,0)   -- (3,1) node [above,left] {$\scriptstyle u'\approx v'^{\frac{2}{3}}$};
%\end{tikzpicture}
%\end{center}
The model is used to describe the behavior of air bubbles in water which move due to buoyancy and it is also valid for water droplets. We refer to \cite{raininitiation, bubblesdropsparticles, precipitation} for more details. In \cite{physicspaper}, slip-flow corrections for water droplets are discussed and it is mentioned that this requires to change the power of the volume for the velocity. More precisely, the left-hand side of \eqref{rain and air bubbles} becomes $\partial_{t}f(x,v,t)+v^{\alpha}\partial_{x}f(x,v,t)$, for some $\alpha\in(0,1)$, and the kernel in \eqref{rain kernel} has the form $K(v,v')=|v^{\alpha}-v'^{\alpha}|(v^{\frac{1}{3}}+v'^{\frac{1}{3}})^{2}$.

The model in (\ref{rain and air bubbles}) with kernel \eqref{rain kernel} is referred to as the \textit{free merging regime} in \cite{physicspaper},  since it is assumed in its derivation that the particles merge when their trajectories cross. When studying equations like \eqref{rain and air bubbles}, it is customary to look for stationary solutions of non-zero flux (cf. \cite{tanaka}) of the form $f\approx v^{d}$, for some $d\in\mathbb{R}$. One of the possible approaches is to compute them using the so-called Zakharov transform (see \cite{zakharov}) and using it we find the solution $f(x,v,t)\approx v^{-\frac{13}{6}}$. However, this approach can be made rigorous only if the integral containing the coagulation kernel in \eqref{rain and air bubbles} is finite, which is not the case for the kernel \eqref{rain kernel}. In order to be able to rigorously find a solution for kernels with the same homogeneity, the so-called \textit{forced locality regime} in which only particles of similar sizes can merge is studied in \cite{physicspaper}. More precisely, for the \textit{forced locality regime} a cut-off in the coagulation kernel is introduced that makes the kernel vanish outside the region where $\frac{1}{q}<\frac{v'}{v}<q$, for some $q>1$. With this cut-off, the integral containing the coagulation kernel converges and thus the stationary solution $f\approx v^{-\frac{13}{6}}$ is a valid solution.

Our main goal in this paper is to show that mass conserving solutions exist, at least for a short time interval, for a class of inhomogeneous coagulation equations that includes example \eqref{rain and air bubbles} with \eqref{rain kernel}. At a first glance this might look surprising since the homogeneity of the kernel in \eqref{rain kernel} is greater than one. Indeed, it  is well-known that gelation (mass loss) occurs for the standard one-dimensional coagulation equation, 
\begin{align}\label{non-existence model}
    \partial_{t}f(v,t)=\frac{1}{2}\int_{(0,v)}K(v-v',v')f(v-v',t)f(v',t)\der v'-\int_{(0,\infty)}K(v,v')f(v,t)f(v',t)\der v',
\end{align}
when the coagulation kernel behaves like a power law of homogeneity $\gamma>1$ (see, for example \cite{gelationpaper,esclaumischperth,papergelationlaurencot}). In particular, for sum kernels of the form
\begin{align}\label{sum kernel}
K(v,v')=v^{\gamma}+v'^{\gamma},
\end{align}
with $\gamma>1$, gelation happens instantaneously. Actually, making use of this property, one can prove that solutions which belong to $L^1$ for the standard coagulation equation do not exist at all for kernels as in \eqref{sum kernel} (see \cite{ballcarr,Carr1992,Dongen,bookcoagulation}). 
In addition, it has been proven in \cite{instangelvanish} that the instantaneous gelation phenomenon holds for Radon measure solutions of the standard coagulation equation with sum kernels of homogeneity greater than one which vanish on the diagonal, i.e. $K(v,v')=0$, such as the kernel in \eqref{rain kernel}.

Our main goal is then  to prove that, in contrast to the homogeneous case, there exist, at least for short times, mass conserving solutions to the inhomogeneous model
\begin{align}\label{lala}
\partial_{t}f(x,v,t)+v^{\alpha}\partial_{x}f(x,v,t)&=\frac{1}{2}\int_{(0,v)}K(v-v',v')f(x,v-v',t)f(x,v',t)\der v'\nonumber\\
    &\qquad -\int_{(0,\infty)}K(v,v')f(x,v,t)f(x,v',t)\der v',
\end{align}
where $\alpha \in (0,1)$. Our proof holds for a rather general class of coagulation kernels (see Assumption \ref{assinhomogeneous}), in particular kernels of the form (\ref{rain kernel}) and (\ref{sum kernel}). 
%The inhomogeneous model (\ref{lala}) with a coagulation kernel of the form (\ref{rain kernel}) is relevant from the application perspective, as is it used to describe the sedimentation of particles (see \cite{physicspaper}).
%The interest of the model (\ref{lala}) with a coagulation kernel of the form (\ref{sum kernel}) lies in the fact that existence theory for coagulation models revolves around kernels of homogeneity $\gamma<1$ or product kernels. Moreover, as mentioned above, solutions in the homogeneous case for kernels of the form (\ref{sum kernel}) do not exist. 
Thus, the model (\ref{lala}) provides a coagulation model in which existence for kernels of the form  (\ref{sum kernel}) with $\gamma>1$ holds, at least for short times.

\subsection{Main result}

\subsubsection*{Short time existence of mass conserving solutions for the inhomogeneous model}

Our goal is to prove short-time existence of mass conserving solutions for the inhomogeneous model 
\begin{align}\label{rain and air bubbles general}
\partial_{t}f(x,v,t)+v^{\alpha}\partial_{x}f(x,v,t)&=\frac{1}{2}\int_{(0,v)}K(v-v',v')f(x,v-v',t)f(x,v',t)\der v'\nonumber\\
    &\qquad -\int_{(0,\infty)}K(v,v')f(x,v,t)f(x,v',t)\der v',
\end{align}
where
\begin{align}\label{condition on alpha}
    \alpha\in(0,1).
\end{align}
\begin{assumption}\label{assinhomogeneous}
We assume that $K\colon [0,\infty) \times [0,\infty) \to [0,\infty)$ is a symmetric and continuous function that satisfies \begin{align}\label{condition on gamma existence}
0\leq K(v,v')\leq K_{1}(v^{\gamma}+v'^{\gamma}), \textup{ with } \gamma\in[0,1+\alpha)
\end{align}
for some constant $K_{1}>0$ and
\begin{align}\label{growth condition on the kernel}
 K(v-v',v')\leq K(v,v'), \textup{ when } v'\in\big[0,\tfrac{v}{2}\big].
\end{align}
\end{assumption}

Condition (\ref{growth condition on the kernel}) is a rather standard assumption in the study of coagulation equations, see for example \cite{diffusion}, and most of the kernels used in applications satisfy this condition, in particular,  kernels of the form $K(v,v')=v^{\gamma}+v'^{\gamma}$ or (\ref{rain kernel}). The condition $\gamma<1+\alpha$ in (\ref{condition on gamma existence}) is such that the transport term will control the contribution coming from the coagulation term.
%Actually, kernels of the form (\ref{rain kernel general}) satisfy (\ref{growth condition on the kernel}) if $h$ satisfies (\ref{growth condition on the kernel}). This is since, for $v'\in[0,\frac{v}{2}]$, the function $g$ defined in (\ref{definition of g}) satisfies $g(\frac{v-v'}{v})\leq g(\frac{v}{v+v'})$. 

\begin{defi}\label{mild solution definition}[Mild solutions] Let $\alpha\in(0,1)$, $\gamma\in[0,1+\alpha)$ and $m>\frac{\gamma+1}{\alpha}$. Let $T>0$ and $K$ satisfy Assumption \ref{assinhomogeneous}. We say that a non-negative function $f\in\textup{C}([0,T]\times\mathbb{R}\times (0,\infty))$ such that
\begin{align}\label{time small makes integral bounded}
   \sup_{t\in[0,T]} \int_{(0,\infty)}(1+v^{\gamma})f(x,v,t)\der v &\leq \frac{C_T}{\max\Big\{1,|x|^{m-\frac{\gamma+1}{\alpha}}\Big\}}, \textup{ for }x\in\mathbb{R},
\end{align}
is a mild solution of equation (\ref{rain and air bubbles general}) if
\begin{align}\label{mild solution equation}
&f(x,v,t)-f(x-v^{\alpha}t,v,0)  S[f](x,v,0,t)=\\
&\frac{1}{2}\int_{0}^{t}\int_{(0,v)}S[f](x,v,s,t)
K(v-v',v')f(x-(t-s)v^{\alpha},v-v',s) f(x-(t-s)v^{\alpha},v',s)\der s,\nonumber 
\end{align}
for all $t\in[0,T]$, $v\in(0,\infty)$  and $x \in\mathbb{R}$, where
\begin{align}\label{definition s mild}
  S[f](x,v,s,t):=  \textup{e}^{-\int_{s}^{t}a[f](x-v^{\alpha}(t-\tau),v,\tau)\der\tau},
\end{align}
with
\begin{align}\label{general a f def}
a[f](x,v,t):=\int_{(0,\infty)}K(v,v')f(x,v',t)\der v'.
\end{align}
\end{defi}
%\begin{rmk}
%In general, since $\gamma$ can be greater than one, one should assume that the constant in (\ref{time small makes integral bounded}) depends on the time $T$. However, since we only prove existence for small $T$ and we do not discuss the situation when $T>1$, the constant in  (\ref{time small makes integral bounded}) does not depend on time.
%\end{rmk}
\begin{defi}\label{mass cons defi mild sol}
   We call $f\in\textup{C}([0,T]\times\mathbb{R}\times(0,\infty))$ a mass-conserving solution of equation (\ref{rain and air bubbles general}) if $f$ is as in Definition \ref{mild solution definition} and satisfies in addition
   \begin{align*}
              \int_{\mathbb{R}}\int_{(0,\infty)}vf(x,v,t)\der v\der x= \int_{\mathbb{R}}\int_{(0,\infty)}vf(x,v,0)\der v\der x
   \end{align*}
   for all $t\in[0,T]$.
\end{defi}
\begin{teo}[Local existence  of solutions]\label{main theorem} Let $\alpha\in(0,1)$, $\gamma\in[0,1+\alpha)$, $m\in\mathbb{N}$ even, and $p=\alpha m$ with $m>\max\{\frac{2\gamma+1}{\alpha}, \frac{2}{\alpha}+3\}$. Let $K$ satisfy Assumption \ref{assinhomogeneous} and  $T>0$ be sufficiently small. Let $f_{\textup{in}}\in\textup{C}^{1}(\mathbb{R}\times(0,\infty))$ such that
\begin{align*}
    f_{\textup{in}}(x,v)\leq \frac{C_0}{1+|x|^{m}+v^{p}},
\end{align*}
for some $C_0>0$ and all $x\in\mathbb{R}$, $v\in (0,\infty)$. Then there exists a mass-conserving solution $f$ of (\ref{mild solution equation})  as in Definition \ref{mass cons defi mild sol} that satisfies 
\begin{align}\label{see decay}
     f(x,v,t)\leq \frac{C}{1+|x|^{m}+v^{p}}, 
\end{align}for all $t\in[0,T]$, for some $C>0$.
\end{teo}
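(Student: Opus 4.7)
The plan is to construct the solution by a Banach fixed-point argument applied to the mild formulation (\ref{mild solution equation}). I work in the weighted space
\[
X_T=\Big\{f\in C([0,T]\times\mathbb R\times(0,\infty)):\ \|f\|_{X_T}:=\sup_{t,x,v}\bigl(1+|x|^m+v^p\bigr)\,f(x,v,t)<\infty\Big\},
\]
and define
\[
\mathcal T[f](x,v,t):=f_\textup{in}(x-v^\alpha t,v)\,S[f](x,v,0,t)+\tfrac12\int_0^t S[f](x,v,s,t)\!\int_{(0,v)}\!K(v-v',v')f(y_s,v-v',s)f(y_s,v',s)\,\der v'\,\der s,
\]
with $y_s:=x-(t-s)v^\alpha$. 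Note that any $f\in X_T$ with $\|f\|_{X_T}\le R$ automatically satisfies the moment bound (\ref{time small makes integral bounded}) with $C_T\sim R$, by splitting the $v$-integration at $v=|x|^{1/\alpha}$. I choose $R$ of order $C_0$ and show that, for $T$ sufficiently small, $\mathcal T$ maps the closed ball $\overline B_R\subset X_T$ into itself and is a strict contraction; Banach fixed point then yields the unique mild solution.

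The delicate step is the closed-ball invariance. For the free-transport term, $f_\textup{in}(x-v^\alpha t,v)\,S[f]\le C_0/(1+|x-v^\alpha t|^m+v^p)$; splitting into the region $v^\alpha t\le|x|/2$ (where $|x-v^\alpha t|\ge|x|/2$) and its complement (where $v^p=v^{\alpha m}\gtrsim(|x|/T)^m$ and hence $|x|^m/v^p\le(2T)^m$) gives a pointwise bound $C_0(1+T^m)\,2^m/(1+|x|^m+v^p)$. For the gain term, symmetry and (\ref{growth condition on the kernel}) restrict, up to a factor $2$, to $v'\in(0,v/2]$; on this range $K(v-v',v')\le K_1(v^\gamma+v'^\gamma)$, $f(y_s,v-v',s)\le R/(1+|y_s|^m+(v/2)^p)$, and the moment bound (\ref{time small makes integral bounded}) yields
\[
\int_0^{v/2}\!K(v-v',v')\,f(y_s,v',s)\,\der v'\ \le\ \frac{C_R(1+v^\gamma)}{\max\{1,|y_s|^{m-(\gamma+1)/\alpha}\}}.
\]
The crucial manoeuvre is the change of variables $y=y_s$, $\der s=\der y/v^\alpha$ along the characteristic, extracting a factor $v^{-\alpha}$. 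The resulting $y$-integral over $[x-tv^\alpha,x]$ is then controlled by splitting into zones according to the relative sizes of $|y|$, $|x|$ and $v^{p/m}$; using $p=\alpha m$, $\gamma<1+\alpha$, $m>(2\gamma+1)/\alpha$ and $m>2/\alpha+3$, the combined power of $v$ arising in each zone is dominated by $v^{-p}$, yielding a gain-term bound of the form $C_R\,T/(1+|x|^m+v^p)$. Shrinking $T$ gives invariance.

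Contraction is proved by an analogous bilinear estimate applied to $\mathcal T[f_1]-\mathcal T[f_2]$, together with the Lipschitz dependence of $S[f]$ on $f$ through $a[f]=\int K\!f\,\der v'$, and yields $\|\mathcal T[f_1]-\mathcal T[f_2]\|_{X_T}\le CT\,\|f_1-f_2\|_{X_T}$. Mass conservation of the fixed point $f$ follows a posteriori: the decay (\ref{see decay}) together with the conditions on $m$ and $p$ ensures (after a cut-off-and-pass-to-the-limit argument if needed) that the moments entering the bilinear form $\iiint v\,K(v,v')f(x,v)f(x,v')\,\der v\,\der v'\,\der x$ are integrable; differentiating (\ref{mild solution equation}) recovers equation (\ref{rain and air bubbles general}) in the classical sense along characteristics, and the usual Fubini-plus-symmetrization argument on the coagulation bilinear form then produces exact cancellation between gain and loss contributions.

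The main obstacle is the pointwise invariance estimate for the gain term, in which the $v^\gamma$ growth of the kernel has to be absorbed by the combination of the transport-induced $v^{-\alpha}$ gain (from integrating along the characteristic) and the ansatz weight $v^{-p}$, uniformly in $x$. The numerology $m>\max\{(2\gamma+1)/\alpha,\,2/\alpha+3\}$ together with the homogeneity restriction $\gamma<1+\alpha$ are precisely what make this balance work; once this single estimate is in hand, continuity in $t$, contraction, and mass conservation follow by standard arguments.
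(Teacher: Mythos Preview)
There is a genuine gap in the invariance estimate for the gain term. Your claim that ``the combined power of $v$ arising in each zone is dominated by $v^{-p}$'' fails precisely in the regime $\gamma>\alpha$, which is the whole point of the theorem (the rain kernel has $\gamma=\tfrac{4}{3}$, $\alpha=\tfrac{2}{3}$). Take $x=0$ and $v$ large. After your change of variables the gain term is bounded by
\[
\frac{C_R\,v^{\gamma}}{v^{\alpha}}\int_{0}^{Tv^{\alpha}}\frac{\der y}{(1+|y|^{m}+v^{p})\max\{1,|y|^{m-(\gamma+1)/\alpha}\}}
\;\gtrsim\;\frac{C_R\,v^{\gamma-\alpha}}{v^{p}}\int_{0}^{Tv^{\alpha}}\frac{\der y}{\max\{1,|y|^{m-(\gamma+1)/\alpha}\}},
\]
since $|y|\le Tv^{\alpha}$ forces $|y|^{m}\le v^{p}$ for $T\le1$. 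The $y$-integral is bounded above and below by positive constants once $Tv^{\alpha}\ge1$, so $(1+v^{p})\cdot\text{Gain}\gtrsim C_R\,v^{\gamma-\alpha}$, which is \emph{unbounded} and does not acquire a factor of $T$. Equivalently, at the scale $v\sim T^{-1/\alpha}$ one gets $(1+v^{p})\cdot\text{Gain}\gtrsim C_R\,T^{1-\gamma/\alpha}\to\infty$ as $T\downarrow0$. Hence $\mathcal T$ does not map $\overline B_R$ into itself in the static weight $1+|x|^{m}+v^{p}$, no matter how small $T$ is. Discarding $S[f]\le1$ is the step that throws away the mechanism on which existence actually rests.

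The paper's argument is structurally different and cannot be reduced to a fixed point in your space. It builds an \emph{iterative} sequence $\{f_n\}$ solving linear problems and proves $f_n\le G$ for a carefully engineered supersolution $G(x,v,t)=B_t\,C_0/(1+|X(x,v,t)|^{m}+V(x,v,t)^{p})$, where $(X,V)$ are backward characteristics of the approximate model $\partial_t G+v^{\alpha}\partial_x G+\tfrac{Lv^{\gamma}}{1+|x|^{m-d}}\xi_R(v)\partial_v G=0$ (with a further monotone modification near $v_{\max}$). The supersolution property (Proposition~\ref{proposition about supersolution}) uses the loss term to cancel the gain down to $\int_0^{v/2}K(v,v')\big[G(x,v{-}v',t)-G(x,v,t)\big]f_n(x,v',t)\,\der v'$, then the monotonicity of $G$ in $v$ and a first-order Taylor expansion convert this into the drift $-\tfrac{Lv^{\gamma}}{1+|x|^{m-d}}\partial_v G$, which is exactly what the characteristic construction absorbs. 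This gain--loss cancellation, together with the ``moving'' weight along $(X,V)$, is the missing idea; a static weight cannot see that large clusters are transported by $v^{\alpha}\partial_x$ into regions of low density before the $v^{\gamma}$-growth of the kernel becomes fatal. Once the uniform supersolution is in place, convergence of $\{f_n\}$ is obtained by a separate Cauchy estimate (Proposition~\ref{sequence has a limit}), and only then does the final bound $f\le C/(1+|x|^m+v^p)$ drop out \emph{a posteriori} from the estimate $G\le C/(1+|x|^m+v^p)$.
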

\begin{rmk}
Theorem \ref{main theorem} is valid for coagulation kernels $K$ as in (\ref{rain kernel}), as well as coagulation kernels of the form $K(v,v')=v^{\gamma}+v'^{\gamma}.$
\end{rmk}

\begin{rmk}
    It is worthwhile to mention that mass conservation will follow due to the fact that our solution will have sufficiently fast decay for large values of $|x|$ and $v$, see \eqref{see decay}. For more details, see the proof of Theorem \ref{main theorem} .
\end{rmk}

When $\gamma<1$, our result could be expected according to the general theory of existence for one-dimensional coagulation equations. This states that solutions exist for kernels that behave like power laws of homogeneity $\gamma<1$, see, for example,  \cite{stewart} for existence of solutions and \cite{articlefournier} for existence of self-similar profiles.

Some multi-dimensional coagulation models have been studied in the mathematical literature, see \cite{multicom1, cristian}.  Moreover, several classes of coagulation models for the distribution of particles with space dependence have also been considered. In particular, models in which in addition to coagulation there is space diffusion of the aggregating particles can be found in \cite{aizenbak,canizo,carrillo,diffusion}. Models that contain coagulation of particles as well as transport terms (that might include sedimentation terms) were studied in \cite{dubovskiichae,dubovskii,dubovskiiparttwo,dubovskiichaeparttwo,burobin,galkinbounded}. In all the models mentioned above the homogeneity of the coagulation kernel is either $\gamma<1$, case in which the solutions are globally defined and preserve the total mass, or product-type kernels are discussed, for which solutions preserve the mass up to a certain point in time.

%\textcolor{blue}{cite here \cite{norris}}

To our knowledge, the only exception that considers the case $\gamma>1$ for space-dependent models is \cite{galkin}. Indeed,  existence of solutions for the discrete version of the model in \eqref{rain and air bubbles} has been established for coagulation kernels of the form $K(i,j)=\sigma_{ij}|\nu_{i}-\nu_{j}|,$ $i,j\in\mathbb{N}$. Here, $\nu_{i}$ is a non-negative function of volume which represents the sedimentation velocity of the particles and $\sigma_{ij}$ can be estimated by a function of the form $C(i^{\gamma}+j^{\gamma})$ with $\gamma<1$. Assuming that $\nu_{i}$ scales like a power law $i^{\beta}$, we would obtain a coagulation kernel $K(i,j)$ that behaves like a homogeneous function with homogeneity $\beta+\gamma$ that might be larger than one. As a matter of fact, the results in \cite{galkin} hold for any choice of $\nu_{i}\geq 0$. In particular, the solutions constructed in \cite{galkin} have total mass of particles that changes continuously in time. Nevertheless it has not been proven in \cite{galkin} that the mass of solutions is conserved. By contrast, in the present paper we construct a theory of existence for mild solutions which conserve mass in finite time. The class of kernels considered in \cite{galkin} has a non-empty intersection with the class of kernels considered in this paper and this intersection includes the coagulation kernel (\ref{rain kernel}), which is relevant in the physical literature as explained above.  However, none of two classes of kernels is included in the other. In addition, the arguments in \cite{galkin} seem to rely significantly on the discrete character of the coagulation process and it is not clear how difficult would be to extend these results to the continuous case. On the other hand, the sedimentation $\nu_{i}$ can be arbitrary in \cite{galkin}. This opens the question of whether we can extend our result to powers of $\alpha$ which are larger than one for the model in \eqref{rain and air bubbles general}. However, a different approach of obtaining information from the characteristics created by the transport term in \eqref{rain and air bubbles general} will be needed in this case.

 Moreover, kernels of the form $K(v,v')=v^{\gamma}+v'^{\gamma},$ with $\gamma\in(1,1+\alpha)$, also satisfy the conditions (\ref{condition on gamma existence}), (\ref{growth condition on the kernel}). Solutions of  the standard one-dimensional coagulation equation do not exist for these types of kernels, see \cite{ballcarr,Carr1992,Dongen,bookcoagulation}. This is to our knowledge the first result of existence of mass-conserving solutions involving sum kernels of homogeneity $\gamma>1$, regardless of whether one considers one-dimensional or multi-dimensional coagulation models. 

It is worth mentioning that our results also include the cases $\gamma=0$ and $\gamma=1$, which are normally studied separately in the literature due to their rich features, such as being able to predict the long-time behavior of solutions, see \cite{menonpego, menonpegodynamical,tranvan,mouhot,bonacini}, or prove uniqueness of self-similar profiles, see \cite{menonpego, throm1, throm2}, where the constant kernel and perturbations of the constant kernel are discussed.

The strategy for proving existence of solutions is to consider an iterative scheme based on  a linear version of (\ref{rain and air bubbles general}). For this equation, we are able to find a suitable supersolution, which in turn will provide sufficiently good moment estimates. This will give us compactness of the iterated sequence and enable us to pass to the limit in the equation. The idea of finding appropriate supersolutions  was also used in \cite{niethammer}, in this case for finding self-similar solutions with fat tails. The idea of considering a linear version of the model in order to better study properties of its solutions is common in the study of coagulation equations, see for example \cite{throm1} where this idea is used to study uniqueness of solutions.

We present the proof of our main Theorem \ref{main theorem} in the following Section \ref{existence inhomogeneous model section} with some technical computations moved to the appendix.

     \section{Proof of the main theorem}\label{existence inhomogeneous model section}

 \subsection{Formal approximation and discussion on the assumptions}\label{appendix a}
 
 Our approach to prove existence of a solution  to  (\ref{rain and air bubbles general}) is based on constructing a suitable supersolution by approximating the coagulation term for large particles by a transport term.
 To motivate this we present in this subsection this formal approximation of \eqref{rain and air bubbles general}.
  Similar computations can be found in \cite[Section 4 and Appendix 1]{physicspaper}.
  
  Suppose now that $f$ is a solution of \eqref{rain and air bubbles general}.
Since we are interested in the behavior for large values of $v$, we can assume, due to the fast decay of $f(x,v,t)$, that the term $\int_{[\frac{v}{2},\infty)}K(v,v')f(x,v,t)$ $f(x,v',t)\der v'$ gives a small contribution. This is consistent with the known results in coagulation equations.  We can then approximate (\ref{rain and air bubbles general})  via 
\begin{align}
&\partial_{t}f(x,v,t)+v^{\alpha}\partial_{x}f(x,v,t) \nonumber\\
&=\int_{(0,\frac{v}{2})}K(v-v',v')f(x,v-v',t)f(x,v',t)\der v'\nonumber-\int_{(0,\infty)}K(v,v')f(x,v,t)f(x,v',t)\der v'
    \\
    &=\int_{(0,\frac{v}{2})}[K(v-v',v')f(x,v-v',t)-K(v,v')f(x,v,t)]f(x,v',t)\der v'\nonumber\\
    &\qquad -\int_{[\frac{v}{2},\infty)}K(v,v')f(x,v,t)f(x,v',t)\der v'\nonumber\\
    &\approx\int_{(0,\frac{v}{2})}[K(v-v',v')f(x,v-v',t)-K(v,v')f(x,v,t)]f(x,v',t)\der v'\label{first part approximation formal}.
    \end{align}

Since our strategy relies on finding a suitable supersolution, it suffices to find a lower bound for \eqref{first part approximation formal}. This is where the assumption (\ref{growth condition on the kernel}) is needed. We thus use that $K(v-v',v')\leq K(v,v')$ when $v'\in[0,\frac{v}{2}]$ and obtain that
\begin{align}
\partial_{t}&f(x,v,t)+v^{\alpha}\partial_{x}f(x,v,t) \nonumber\\
    &-\int_{(0,\frac{v}{2})}[K(v-v',v')f(x,v-v',t)-K(v,v')f(x,v,t)]f(x,v',t)\der v'\nonumber\\
    &\geq \partial_{t}f(x,v,t)+v^{\alpha}\partial_{x}f(x,v,t) -\int_{(0,\frac{v}{2})}K(v,v')[f(x,v-v',t)-f(x,v,t)]f(x,v',t)\der v'.\label{this is where the assumption is needed}
\end{align}
    Assuming now that the coagulation kernel $K$ behaves like $v'^{\gamma}+v^{\gamma}$ and since $v'\in[0,\frac{v}{2}]$, we further deduce that
    \begin{align}\label{approximation derivation}
\partial_{t}f(x,v,t)+v^{\alpha}\partial_{x}f(x,v,t)  &\approx\int_{(0,\frac{v}{2})}v^{\gamma}[f(x,v-v',t)-f(x,v,t)]f(x,v',t)\der v'\nonumber\\
&\approx - v^{\gamma}\int_{(0,\frac{v}{2})}\int_{v-v'}^{v}\partial_{w}f(x,w,t)\der w f(x,v',t)\der v'.
\end{align}
Assume that $\partial_{w}f(x,w,t)$ behaves similarly for $w\in[\frac{v}{2},v]$ and thus
\begin{align}\label{approximate model with derivative}
\partial_{t}f(x,v,t)+v^{\alpha}\partial_{x}f(x,v,t)   \approx-v^{\gamma}\partial_{v} f(x,v,t)\int_{(0,\frac{v}{2})}v'f(x,v',t)\der v'.
\end{align}
We denote by
$M_1(x,t):=\int_{(0,\infty)}v'f(x,v',t)\der v'$ the first moment in $v$ of $f$.
We consider only large values of $v$ so that we can safely assume that $\int_{(0,\frac{v}{2})}v'f(x,v',t)\der v'$ contains most of the mass. In this manner, we can further approximate \eqref{approximate model with derivative} by
\begin{align}\label{final step approximate model}
\partial_{t}f(x,v,t)+v^{\alpha}\partial_{x}f(x,v,t)\approx -v^{\gamma}\partial_v f(x,v,t) M_1(x,t).
\end{align}

Notice that in order for our approximation to hold, the assumption (\ref{growth condition on the kernel}) was needed in \eqref{this is where the assumption is needed}. Otherwise, an analogous approximation of the model could be obtained by replacing $v^{\gamma}\partial_v f(x,v,t)$ in \eqref{approximate model with derivative} by $\partial_v\big(v^{\gamma}f(x,v,t)\big)$. The approximation containing the term $\partial_v\big(v^{\gamma}f(x,v,t)\big)$ is the one actually used in \cite{physicspaper}. However, due to (\ref{growth condition on the kernel}), the approximation used in (\ref{final step approximate model}) suffices in order to prove our desired result.
Suppose now that $M_1(x,t)$ decays sufficiently fast for large values of $x$, that is 
assume that 
\begin{align}\label{moment x t decays sufficiently fast}
    M_1(x,t)\leq \frac{L}{1+|x|^{\overline{m}}},
\end{align}
for some sufficiently large $\overline{m}$ and some $L>0$. Combining (\ref{moment x t decays sufficiently fast}) with (\ref{final step approximate model}), we obtain that $f$ should behave formally like the solution of the equation
\begin{align}\label{approximate model}
\partial_{t}f(x,v,t)+v^{\alpha}\partial_{x}f(x,v,t)   +\frac{
    Lv^{\gamma}\partial_v f(x,v,t)}{1+|x|^{\overline{m}}}=0.
\end{align}
This motivates the analysis of the equation (\ref{eq char}) below when trying to find a supersolution for a linear version of (\ref{rain and air bubbles general}). In order to obtain a behavior of the form (\ref{moment x t decays sufficiently fast}) for $M_1(x,t)$, we have to work with functions $f$ such that \eqref{see decay} holds.

 \subsection{Upper and lower bounds for the solution of the approximated model}
To prove short time existence of a solution to  (\ref{rain and air bubbles general}) we will set up an iterative scheme and derive, using \eqref{approximate model}, a uniform supersolution for the solutions of this scheme. More precisely, for $n\in\mathbb{N}$, we define inductively a sequence of functions $\{f_{n}\}_{n\in\mathbb{N}}$  as follows:
\begin{align}\label{inductive sequence}
    \partial_{t}f_{n+1}(x,v,t)+v^{\alpha}\partial_{x}f_{n+1}(x,v,t)&=\frac{1}{2}\int_{(0,v)}K(v-v',v')f_{n+1}(x,v-v',t)f_{n}(x,v',t)\der v'\nonumber\\
    &\qquad-\int_{(0,\infty)}K(v,v')f_{n+1}(x,v,t)f_{n}(x,v',t)\der v',
\end{align}
with $\alpha\in(0,1)$, and
\begin{align}\label{initial function}
f_{n+1}(x,v,0)=f_{n}(x,v,0)= f_{\textup{in}}(x,v)\leq \frac{C_0}{1+|x|^{m}+v^{p}}, \qquad \textup{ for all } n\in\mathbb{N}.
\end{align}

We take $f_{0}$ to be a function such that
\begin{align}\label{definition f0}
  \partial_{t}f_{0}(x,v,t)+v^{\alpha}\partial_{x}f_{0}(x,v,t)=0
\end{align}
with
\begin{align}\label{definition f0 initial condition}
  f_{0}(x,v,0) =f_{\textup{in}}(x,v) \leq\frac{C_0}{1+|x|^{m}+v^{p}}.
\end{align}
Notice that, if we have equality in (\ref{definition f0 initial condition}), the solution of (\ref{definition f0}) is  $f_{0}(x,v,t)=\frac{C_0}{1+|x-v^{\alpha}t|^{m}+v^{p}}$.

\begin{rmk}
 In principle we have to prove that the  sequence in (\ref{inductive sequence}) is well-defined. A rigorous proof would work as follows. First we approximate $K$ with a kernel $K_{N}$ which is such that 
\begin{align}\label{truncated kernel}
K_{N}(v,v')=K(v,v')\chi_{N}(v+v'),
\end{align}
for $N>1$ and where $\chi_{N}:[0,\infty)\rightarrow[0,1]$ is a continuous functions such that $\chi_{N}(x)=1$, when $x\leq \frac{N}{2}$, and $\chi_{N}(x)=0,$ when $x\geq N$. Then we can establish with a standard fixed-point argument the existence and uniqueness of $f_{n,N}$. The key result is then that we obtain a uniform  fast decaying bound for the sequence of solutions
which is in particular independent of $N$. Then one can pass to the limit $N \to \infty$. Since this procedure is standard once one has the bounds on the solution, we omit the details here and work directly with $K$.
\end{rmk}

{\bf Notations and Assumptions:} 
Let $R>0$. In the following we will denote by 
\begin{equation}\label{chirdef}
\xi_{R}\in C([0,\infty))\,, \quad \xi_R\colon [0,\infty)\rightarrow [0,1] \; \mbox{ such that } \xi_{R}(v)=1
\mbox{ if } v\geq 2R \mbox{ and } \xi_{R}(v)=0 \mbox{ if } v\leq R.
\end{equation}
Furthermore we assume
\begin{equation}\label{parameters}
 \alpha \in (0,1)\,, \quad \gamma \in [0,1+\alpha)\,, \quad m \in \N\,,\quad m \mbox{ even}\,,  \quad p=\alpha m\,, \quad   
 m>\max\Big\{\frac{2\gamma+1}{\alpha}, \frac{2}{\alpha}+3\Big\}\,,
\end{equation}
and define $d$ via
  \begin{equation}\label{we need function to be even}
\left\{\begin{aligned}
d&=\bigg[\frac{2}{\alpha}\bigg]+1; &\textup{ if }\bigg[\frac{2}{\alpha}\bigg]& \textup{ odd; }\\
d&=\bigg[\frac{2}{\alpha}\bigg]+2;&\textup{ if }\bigg[\frac{2}{\alpha}\bigg]& \textup{ even, }
   \end{aligned}\right.
   \end{equation}
   where $[\cdot ]$ denotes the floor function. Note that \eqref{parameters} and \eqref{we need function to be even} in particular imply that $m>d+1$.

\bigskip
The main goal in this section will be to derive estimates for the solution of a transport equation that approximates the coagulation equation. 
 For $L>0$ let  $G_{L}$ be the solution of 
\begin{align}
\partial_{t}G_{L}(x,v,t)+v^{\alpha}\partial_{x}G_{L}(x,v,t)+\frac{{L}v^{\gamma}}{1+|x|^{m-d}}\xi_{R}(v)\partial_{v}G_{L}(x,v,t)&=0,\label{eq char}\\
G_{L}(x,v,0)&=\frac{C_0}{1+|x|^{m}+v^{p}}\,.\nonumber
\end{align}
We first study the properties of the  backward characteristics for  equation (\ref{eq char}). To this end, we define   $X$ and $V$ via 
    \begin{equation}\label{the original characteristics}
\left\{\begin{aligned}
\partial_{t}{X}(x,v,t)&=-{V}^{\alpha}, & \qquad {X}(x,v,0)&=x\,, \\
\partial_{t}{V}(x,v,t)&=-\frac{{L}{V}^{\gamma}\xi_{R}({V})}{1+|{X}|^{m-d}},& \qquad {V}(x,v,0)&=v\,,
   \end{aligned}\right.
   \end{equation}
 where $d$ was defined in (\ref{we need function to be even}) and ${L}$ is as in \eqref{eq char}. 

\begin{prop}[Properties of the characteristics]\label{properties of the characteristics}
Given $L>0$ and $\delta\in\big(0,\frac{1}{2}\big)$ there exists a sufficiently large $R>0$ such that 
 for all $t\geq 0$ the following estimates hold:
\begin{align}
(1-\delta) v &\leq V(x,v,t) \leq  (1+\delta) v ;\label{Vestimate}\\
(1-\delta) v^{\alpha}t &\leq x-X(x,v,t) \leq  (1+\delta)  v^{\alpha} t ;\label{Xestimate}\\
 \frac{\alpha}{18} v^{\alpha -1} t &\leq -\partial_{v} X(x,v,t) \leq 18
 \alpha v^{\alpha-1} t.
 \label{dvXestimate}
 \end{align}
 Moreover, if $x\not\in [(1-2\delta) v^{\alpha}t, (1+2\delta) v^{\alpha}t]$, then
 \begin{align}
 \frac{1}{4} & \leq \partial_{v} V(x,v,t) \leq \frac{9}{4}. \label{dvVestimate}
\end{align}
Otherwise, if $x\in[(1-2\delta) v^{\alpha}t, (1+2\delta) v^{\alpha}t]$, then 
 \begin{align}
 |\partial_{v} V(x,v,t)| &\leq 36 L \max\{1,v^{\gamma-1}t\}; \label{dvVestimate problematic region}\\
 \partial_{v} V(x,v,t)&\leq 2.\label{dvVestimate decreasing}
\end{align}
 \end{prop}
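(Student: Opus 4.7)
The plan is to treat \eqref{the original characteristics} as a perturbation of the free transport $\partial_t X = -V^\alpha$, $\partial_t V = 0$, exploiting that the perturbation makes $V$ monotone non-increasing and that the weight $(1+|X|^{m-d})^{-1}$ is integrable along characteristics. Classical ODE theory provides a unique $C^1$ solution on a maximal interval, and the a priori bounds below extend this to $[0,\infty)$. The upper bound in \eqref{Vestimate} is immediate from $\partial_t V \leq 0$. For the lower bound I use a bootstrap: if $v \leq R$ the cutoff $\xi_R$ vanishes along the characteristic and $V \equiv v$; if $v > R$, setting $t^{*} = \sup\{\tau \geq 0 : V(s) \geq (1-\delta)v \text{ for } s \in [0,\tau]\}$ and assuming $t^{*} < \infty$, the change of variables $u = x - X(s)$ with Jacobian $du = V(s)^\alpha\, ds$ gives
\begin{align*}
\int_0^{t^{*}} \frac{ds}{1+|X(s)|^{m-d}} \leq \frac{1}{((1-\delta)v)^\alpha}\int_{\mathbb{R}} \frac{du}{1+|x-u|^{m-d}} = \frac{C_{m,d}}{(1-\delta)^\alpha v^\alpha},
\end{align*}
where $C_{m,d}<\infty$ since $m-d>1$. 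Integrating the $V$-equation yields $v - V(t^{*}) \leq C L v^{\gamma-\alpha}/(1-\delta)^\alpha$, which is strictly less than $\delta v$ as soon as $v \geq R$ and $R$ is chosen large in terms of $L,\delta,\gamma,\alpha$ (using $\gamma < 1+\alpha$), a contradiction. Then \eqref{Xestimate} follows from $x - X(t) = \int_0^t V^\alpha\, ds$ together with $(1-\delta)^\alpha \geq 1-\delta$ for $\alpha,\delta \in (0,1)$.

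Next I would differentiate \eqref{the original characteristics} in $v$ to obtain the coupled linear system
\begin{align*}
\partial_t \partial_v X &= -\alpha V^{\alpha-1} \partial_v V, & \partial_v X(0) &= 0,\\
\partial_t \partial_v V &= -L[\phi'(V)\psi(X) \partial_v V + \phi(V)\psi'(X) \partial_v X], & \partial_v V(0) &= 1,
\end{align*}
with $\phi(V) = V^\gamma \xi_R(V)$ and $\psi(X) = (1+|X|^{m-d})^{-1}$. The same change of variables shows that both $\int_0^t \psi(X)\, ds$ and $\int_0^t |\psi'(X)|\, ds$ are $\leq C v^{-\alpha}$ uniformly in $x,t$ (the second uses $\int_{\mathbb{R}} |u|^{m-d-1}(1+|u|^{m-d})^{-2}\, du < \infty$, valid since $m-d+1>2$). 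In the non-problematic region, $|X(s)| \geq \delta v^\alpha t$ pointwise on $[0,t]$, so $\psi$ and $|\psi'|$ admit the much stronger pointwise decay $(v^\alpha t)^{-(m-d)}$; a Gronwall estimate on the coupled system, after $R$ is taken large, gives $|\partial_v V(t) - 1| \leq 3/4$, which is \eqref{dvVestimate}, and integrating $\partial_t \partial_v X$ using $V^{\alpha-1} \in [v^{\alpha-1}, (1-\delta)^{\alpha-1}v^{\alpha-1}]$ (recall $\alpha-1<0$) produces \eqref{dvXestimate}.

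I expect the main obstacle to be the problematic region $x \in [(1-2\delta)v^\alpha t, (1+2\delta)v^\alpha t]$, where $|X(t)|$ can be arbitrarily small so $\psi(X(t)) \sim 1$ and the pointwise decay of $\psi,\psi'$ fails. The key trick is to rewrite the forcing via $\psi'(X) = -V^{-\alpha}\partial_s \psi(X)$ and integrate by parts:
\begin{align*}
-L\int_0^t \phi(V)\psi'(X)\partial_v X\, ds = L\bigl[V^{\gamma-\alpha}\xi_R(V)\partial_v X\,\psi(X)\bigr]_0^t - L\int_0^t \partial_s\bigl[V^{\gamma-\alpha}\xi_R(V)\partial_v X\bigr]\psi(X)\, ds.
\end{align*}
The boundary at $s=0$ vanishes because $\partial_v X(0)=0$; the boundary at $s=t$ has absolute value at most $CLv^{\gamma-1}t$ (using $|\partial_v X(t)|\leq Cv^{\alpha-1}t$ and $\psi\leq 1$), which yields \eqref{dvVestimate problematic region}. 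Since $\partial_v X(t)\leq 0$ and $\psi\geq 0$, this boundary contribution is non-positive; combined with the damping $-L\int \phi'(V)\psi(X)\partial_v V\, ds$ (which pulls $\partial_v V$ toward zero once $\partial_v V>0$) and the $O(Lv^{\gamma-1-\alpha})$ remainder from the integration by parts, one concludes $\partial_v V(t) \leq 1 + O(Lv^{\gamma-1-\alpha}) \leq 2$ for $R$ large, giving \eqref{dvVestimate decreasing}. Finally, \eqref{dvXestimate} in the problematic region follows by noting that the time window where $\partial_v V$ differs noticeably from $1$ lies near $s=t$ and has length $O(v^{-\alpha})$, so the bulk of $-\partial_v X(t)$ still comes from the interval $[0,s_*]$ on which $\partial_v V\approx 1$. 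The delicate point throughout this step is closing the feedback between $\partial_v X$ and $\partial_v V$ by a short bootstrap, using crude bounds for $\partial_v X$ in a first pass and then refining.
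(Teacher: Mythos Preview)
Your argument for \eqref{Vestimate} and \eqref{Xestimate} is correct and essentially matches the paper's (the paper packages the same change-of-variables bound into the antiderivative $\psi(x)=\int_{-\infty}^x L(1+|\xi|^{m-d})^{-1}\,d\xi$, but the content is identical).

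The gap is in your treatment of \eqref{dvVestimate}. You claim that in the non-problematic region $|X(s)|\geq \delta v^\alpha t$ holds \emph{pointwise for all $s\in[0,t]$}, and base the Gronwall estimate on the resulting pointwise decay of $\psi(X(s))$ and $|\psi'(X(s))|$. This is only true for the half of the non-problematic region where $x>(1+2\delta)v^\alpha t$. When $x<(1-2\delta)v^\alpha t$ (for instance $x=0$ and any $t>0$), the backward characteristic starts at $X(0)=x$ and must cross the region $|X|\lesssim 1$ before reaching $X(t)\leq -\delta v^\alpha t$; along the way $\psi(X(s))\sim 1$ on a window of length $\sim v^{-\alpha}$, and your pointwise bound $(v^\alpha t)^{-(m-d)}$ fails completely. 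Without it the naive Gronwall delivers only $|\partial_v V-1|\lesssim Lv^{\gamma-1}t$, which is \eqref{dvVestimate problematic region}, not \eqref{dvVestimate}.

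The paper sidesteps this by observing that \eqref{the original characteristics} has a first integral: with $\psi$ as above and $\Phi(z,v)$ solving $\partial_z\Phi=\Phi^{\gamma-\alpha}\xi_R(\Phi)$, $\Phi(0,v)=v$, one has $V(x,v,t)=\Phi(\psi(X(x,v,t))-\psi(x),v)$ for all $t$. Differentiating in $v$ gives the \emph{pointwise} identity
\[
\partial_v V(t)=\partial_v\Phi\;+\;\partial_z\Phi\cdot\psi'(X(t))\cdot\partial_v X(t),
\]
which depends only on $X(t)$, not on the full trajectory. Since $|X(t)|\geq\delta v^\alpha t$ throughout the non-problematic region (at time $t$, which is now all that matters), \eqref{dvVestimate} follows directly. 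The same device produces a closed formula for $\partial_v X$ from the relation $\int_x^X \Phi^{-\alpha}\,d\xi=-t$, so \eqref{dvXestimate} holds for \emph{all} $(x,v,t)$ with no bootstrap and independently of \eqref{dvVestimate}; this decoupling is exactly what your coupled Gronwall is missing. Your integration-by-parts trick for the problematic region could perhaps be pushed to repair the case $x<(1-2\delta)v^\alpha t$ as well (the boundary term at $s=t$ then does see $\psi(X(t))$, which is small), but the first-integral reduction is both simpler and what the paper actually does.
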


 \begin{cor}
  As an immediate consequence of \eqref{Xestimate} we obtain the following estimates for all $t\geq 0$.
 
 If $x\leq 0$ then 
  \begin{equation}\label{xnegative}
 |x|+(1-\delta) v^{\alpha} t = |x-(1-\delta) v^{\alpha} t|  \leq |X(x,v,t)| \leq  |x|+(1+\delta) v^{\alpha}t = |x-(1+\delta) v^{\alpha}t| \,.
  \end{equation}
If $x>0$ then 
\begin{align}
 | x- (1-\delta)  v^{\alpha}t | &\leq |X(x,v,t)| \leq |x- (1+\delta) v^{\alpha}t| \qquad  \mbox{ if } t \geq \frac{x}{(1-\delta)v^{\alpha}}\,,\label{xpos1}\\
  |x- (1+\delta) v^{\alpha}t| & \leq |X(x,v,t)| \leq |x- (1-\delta) v^{\alpha}t|  \qquad  \mbox{ if } t \leq \frac{ x}{(1+\delta)v^{\alpha}}, \label{xpos2}
\end{align}
and
\begin{align}
 |X(x,v,t)| \leq 3 v^{\alpha} t\qquad   \mbox{ if } t \in \Big( \frac{x}{(1+\delta)v^{\alpha}}, \frac{ x}{(1-\delta)v^{\alpha}}\Big)\,.\label{xpos3}
\end{align}
%Furthermore, if we denote by $\tau(x,v) \in \R$ the vanishing time of the characteristic starting at $(x,v)$, that is the unique time such that $X(x,v,\tau(x,v))=0$, we have
%\begin{align}\label{tauest}
% \frac{x}{(1+\delta)v^{\alpha}} \leq \tau (x,v) \leq   \frac{x}{(1-\delta)v^{\alpha}}.
%\end{align}
%In addition, using (\ref{the original characteristics}) and (\ref{Vestimate}), we obtain that
%\begin{align}\label{Xtau}
  %\big( (1-\delta) v\big)^{\alpha}\big|\tau(x,v)-t\big| \leq |X(x,v,t)| \leq \big( (1+\delta) v\big)^{\alpha} \big| \tau(x,v)-t\big|\,. 
%\end{align}

 \end{cor}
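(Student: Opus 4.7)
The plan is to observe that the base estimate \eqref{Xestimate} can be rewritten as a direct two-sided bound
\[
x-(1+\delta)v^{\alpha}t \;\leq\; X(x,v,t) \;\leq\; x-(1-\delta)v^{\alpha}t,
\]
and then perform a case analysis on the sign of the endpoints of this interval. The conclusions about $|X(x,v,t)|$ follow purely from how the absolute-value function interacts with these inequalities on the positive, negative, or mixed-sign subsets of $\mathbb{R}$. So the proof is a short, purely algebraic case split; no further information about the characteristic system \eqref{the original characteristics} is needed beyond \eqref{Xestimate}.

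First I would treat the easy monotone cases. If $x\leq 0$, then both endpoints $x-(1\pm\delta)v^{\alpha}t$ are nonpositive, hence $X(x,v,t)\leq 0$; applying $|\cdot|=-(\cdot)$ reverses the chain of inequalities and, using $-x=|x|$, produces \eqref{xnegative}. If $x>0$ and $t\geq x/((1-\delta)v^{\alpha})$, then $(1-\delta)v^{\alpha}t\geq x$, so again both endpoints are nonpositive and the same sign-flip argument yields \eqref{xpos1} (note that the order of the bounds on $|X|$ swaps relative to the bounds on $X$). Symmetrically, if $t\leq x/((1+\delta)v^{\alpha})$ then $(1+\delta)v^{\alpha}t\leq x$, both endpoints are nonnegative, $X\geq 0$, and the inequalities are preserved as in \eqref{xpos2}.

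The only marginally subtler case is the intermediate window $t\in\bigl(x/((1+\delta)v^{\alpha}),x/((1-\delta)v^{\alpha})\bigr)$, i.e. $(1-\delta)v^{\alpha}t<x<(1+\delta)v^{\alpha}t$, where the endpoints of the interval containing $X$ straddle zero and monotonicity of $|\cdot|$ is lost. Here I would simply bound
\[
|X(x,v,t)|\;\leq\;\max\bigl\{|x-(1-\delta)v^{\alpha}t|,\,|x-(1+\delta)v^{\alpha}t|\bigr\},
\]
and use the inequalities defining the window together with $\delta<\tfrac12$ to estimate each term by $(1+\delta)v^{\alpha}t<\tfrac{3}{2}v^{\alpha}t$, yielding \eqref{xpos3}.

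I do not anticipate any real obstacle: the result is a direct corollary of \eqref{Xestimate} and careful bookkeeping of signs. The only thing to be careful about is to remember that taking absolute values reverses the direction of a chain of inequalities precisely on the negative half-line, which explains why the order of the upper and lower bounds is swapped between the cases $x\leq 0$ (or $x>0$ with large $t$) and $x>0$ with small $t$.
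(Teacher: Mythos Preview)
Your proposal is correct and is precisely the natural way to extract the corollary from \eqref{Xestimate}; the paper itself gives no proof beyond declaring it ``an immediate consequence'', and your case analysis on the sign of the interval endpoints is exactly the intended elementary argument. Your bound in the intermediate window is in fact sharper than needed (you obtain $\tfrac{3}{2}v^{\alpha}t$, comfortably within the stated $3v^{\alpha}t$).
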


 \bigskip
\begin{proof}[Proof of Proposition \ref{properties of the characteristics}]
\textbf{Proof of \eqref{Vestimate}:}
 First, we see from \eqref{the original characteristics}   that $X(x,v,t) \leq x$ and $V(x,v,t) \leq v$ for all $t\geq 0$.
Next, we define
\begin{align}\label{definition of h}
    \psi(x):=\int_{-\infty}^{x}\frac{{L}\,\der \xi}{1+|\xi|^{m-d}}
\end{align}
and $\Phi\colon (0,\infty)^2 \to \mathbb{R}$, $(z,v) \to \Phi(z,v)$ via 
\begin{align}\label{definition characteristics solution}
    \Phi(\psi(X(x,v,t))-\psi(x),v):=V(x,v,t).
\end{align}
Then it follows from \eqref{the original characteristics} that 
\begin{equation}\label{Phiequation}
\left\{\begin{aligned}
\partial_z \Phi (\cdot, v)&= \Phi^{\gamma-\alpha} \xi_R(\Phi)\,, \qquad z>0\\
\Phi(0,v)&=v\,.
   \end{aligned}\right.
   \end{equation}
By integrating the ODE in \eqref{Phiequation} we deduce that 
\begin{align}
    \Phi(z,v)&=v& \qquad \mbox{ for all } z\geq 0\,,  v\in[0,R]\,,\label{Phi-1}\\
    \Phi(z,v)&=(v^{1-(\gamma-\alpha)}+(1-(\gamma-\alpha))z)^{\frac{1}{1-(\gamma-\alpha)}}
    & \qquad \mbox{ for all } z\geq 0\,, v\geq 2R\,, \label{Phi-2}\\
    \Phi(z,v)&\leq(v^{1-(\gamma-\alpha)}+(1-(\gamma-\alpha))z)^{\frac{1}{1-(\gamma-\alpha)}}&\qquad \mbox{ for all } z\geq 0
    \,, \, v\geq R\,, \label{Phi-3}\\
            \Phi(z,v)&\geq v & \qquad  \mbox{ for all } z\geq 0\,,\, v\geq R\,. \label{Phi-4}
\end{align}
Notice also that $0\leq \psi(x)\leq C L$, for some constant $C>0$ which is independent of $x$. It thus suffices to consider values of $z$ in the interval $[0,CL]$. Taking in particular $R$ sufficiently large, it follows that $(1-\delta) v\leq \Phi(z,v) \leq (1+\delta)  v$ for all $z \in [0,CL]$ and $v\geq 0$. Due to the definition of $\Phi$ in \eqref{definition characteristics solution}, the estimate \eqref{Vestimate} follows.

\bigskip

\textbf{Proof of \eqref{Xestimate}:} Estimate \eqref{Xestimate} follows then from \eqref{Vestimate} and the relation 
\begin{align}\label{bound x 1}
X(x,v,t)-x=-\int_{0}^{t} V^{\alpha}(x,v,\tau)\der \tau
\end{align}
together with the fact that $(1+\delta)^{\alpha}\leq 1+\delta$ and $(1-\delta)^{\alpha}\geq 1-\delta$.

\bigskip

\textbf{Proof of \eqref{dvXestimate}:} In order to estimate the derivatives with respect to $v$ of the characteristics we 
first prove that 
\begin{align}\label{der in v of phi}
 \frac{1}{2} \leq \partial_{v}\Phi(z,v)\leq 2.
\end{align}
Estimate \eqref{der in v of phi} is immediate if $v \leq R$. If $v \geq R$ we have
 \begin{align*}
  \frac{\der }{\der z}\bigg(\frac{\der \Phi}{\der v}\bigg)=\bigg[(\gamma-\alpha)\Phi^{\gamma-\alpha-1}\xi_{R}(\Phi)+\Phi^{\gamma-\alpha}\xi'_{R}(\Phi)\bigg]\frac{\der \Phi}{\der v}\,, \qquad 
 \frac{\der \Phi}{\der v}(0)=1
  \end{align*}
such that  
\begin{align}\label{derivative in problematic region}
\frac{\der \Phi}{\der v}=\exp \Big( {\int_{0}^{z}\big[(\gamma-\alpha)\Phi^{\gamma-\alpha-1}\xi_{R}(\Phi)+\Phi^{\gamma-\alpha}\xi'_{R}(\Phi)\big]
\der s}\Big) \leq \exp \Big( C R^{\gamma - \alpha -1}\Big)
  \end{align}
and the upper bound in \eqref{der in v of phi} follows since $\gamma <1+\alpha$ 
 and if $R$ is sufficiently large. Analogously, we obtain that
\begin{align*}
    \frac{\der \Phi}{\der v}\geq \exp \Big(- C R^{\gamma - \alpha -1}\Big)
\end{align*}
and \eqref{der in v of phi} follows. In order to proceed, we notice that 
\begin{align}\label{tintegral}
\int_{x}^{X}\frac{1}{\Phi(\psi(\xi)-\psi(x),v)^{\alpha}}\der \xi=-t.
\end{align}

Differentiating in $v$ we obtain
\begin{align}\label{first derivative of x}
&\frac{\partial_{v}X(x,v,t) }{\Phi(\psi(X(x,v,t))-\psi(x),v)^{\alpha}}=  \alpha\int_{x}^{X} \frac{\partial_{v}\Phi(\psi(\xi)-\psi(x),v)}{\Phi(\psi(\xi)-\psi(x),v)^{\alpha+1}}\der \xi.
\end{align}
Since $\delta\in(0,\frac{1}{2})$, \eqref{Vestimate} and \eqref{Xestimate} imply that $\frac{v}{2}\leq V(x,v,t)\leq \frac{3v}{2}$ and that $\frac{v^{\alpha}t}{2}\leq x-X(x,v,t)\leq \frac{3v^{\alpha}t}{2}$. From this we deduce, using \eqref{Xestimate}, the estimates for $\Phi$, and the fact that $\alpha<1$, that
\begin{align}\label{der in v of x}
-\partial_{v}X(x,v,t)=&\alpha\int_{X}^{x} \frac{\partial_{v}\Phi(\psi(\xi)-\psi(x),v)}{\Phi(\psi(\xi)-\psi(x),v)^{\alpha+1}}\der \xi\times  [\Phi(\psi(X(x,v,t))-\psi(x),v)]^{\alpha}\nonumber\\
    &\leq 4\alpha 3^{\alpha} \int_{X}^{x} \frac{1}{v^{\alpha+1}}\der \xi  v^{\alpha} \leq 12\alpha v^{-1}[x-X(x,v,t)]\leq 18\alpha v^{\alpha-1}t.
\end{align}

Analogously we obtain that
\begin{align*}
    -\partial_v X \geq &\frac{\alpha}{2}\int_{X}^{x} \frac{1}{\Phi(\psi(\xi)-\psi(x),v)^{\alpha+1}}\der \xi\times  [\Phi(\psi(X(x,v,t))-\psi(x),v)]^{\alpha}\nonumber\\ 
       &\geq \frac{\alpha}{3^{\alpha+1}} \int_{X}^{x} \frac{1}{v^{\alpha+1}}\der \xi  v^{\alpha} \leq \frac{\alpha}{9} v^{-1}[x-X(x,v,t)]\geq \frac{\alpha}{18} v^{\alpha -1} t,
\end{align*}
which concludes the proof of \eqref{dvXestimate}.

\bigskip

\textbf{Proof of \eqref{dvVestimate}:} We now prove that \eqref{dvVestimate} holds if $x\not\in[(1-2\delta) v^{\alpha}t,  (1+2\delta) v^{\alpha}t]$. From (\ref{definition characteristics solution}) we deduce that 
\begin{align}\label{dervv}
\partial_{v}V(x,v,t)=\partial_{v}\Phi(\psi(X)-\psi(x),v)+\partial_{z}\Phi(\psi(X)-\psi(x),v) \psi'(X)\partial_{v}X.
\end{align}
Due to (\ref{der in v of phi})   it suffices to show that
\begin{align}\label{needed for the appendix}
 |\partial_{z}\Phi(\psi(X)-\psi(x),v) \psi'(X)\partial_{v}X|\leq \frac{1}{4}
 \end{align}
 in order to conclude our proof. From (\ref{definition characteristics solution}), (\ref{Phiequation}) and (\ref{Vestimate}), we have that 
\begin{align*}
    0\leq \partial_{z}\Phi(z,v)\leq 2 v^{\gamma-\alpha}.
\end{align*}
Indeed, by \eqref{definition of h} and \eqref{Vestimate} it holds that
\begin{align*}
    \partial_{z}\Phi(z,v)\leq \Phi(z,v)^{\gamma-\alpha}\leq \max\{2^{\alpha-\gamma},\bigg(\frac{3}{2}\bigg)^{\gamma-\alpha}\}v^{\gamma-\alpha}\leq 2 v^{\gamma-\alpha}.
\end{align*}
By \eqref{dvXestimate}, it thus follows that
\begin{align}\label{negligible term derivative of v}
      |\partial_{z}\Phi(\psi(X)-\psi(x),v) \psi'(X)\partial_{v}X|\leq \frac{36\alpha Lv^{\gamma-\alpha}}{1+|X(t)|^{m-d}}v^{\alpha-1}t=\frac{36\alpha L v^{\gamma-1}t}{1+|X(t)|^{m-d}}.
\end{align}
 We only analyze here the case when $v\geq R$, since by \eqref{Phiequation} we have that $\partial_{z}\Phi(z,v)=0$ when $v\leq R$ and thus there is nothing to prove in this case.
\begin{enumerate}
    \item[1)] Assume $x\leq (1-2\delta) v^{\alpha}t$.
\end{enumerate}
 Then by \eqref{xpos1} we have  $|X(x,v,t)|\geq | x- (1-\delta)  v^{\alpha}t|= (1-\delta) v^{\alpha}t -x \geq \delta  v^{\alpha}t$ and we obtain
 \begin{align*}
     |\partial_{z}\Phi(\psi(X)-\psi(x),v) \psi'(X)\partial_{v}X|\leq C L \frac{ v^{\gamma-1}t}{1+(\delta v^{\alpha}t)^{m-d}}\,.
     %\leq C L v^{\gamma-\alpha-1}(v^{\alpha}t)^{1+d-m}\leq Cv^{\gamma-\alpha-1}.
     \end{align*}
Now, if  $v^{\gamma-1}t\geq \frac{1}{4 L C}$, then $v^{\alpha}t\geq  \frac{1}{4LC}$ since $\alpha>\gamma-1$ and we obtain 
\begin{align*}
     |\partial_{z}\Phi(\psi(X)-\psi(x),v) \psi'(X)\partial_{v}X|&\leq C L \frac{ v^{\gamma-1}t}{1+(\delta v^{\alpha}t)^{m-d}}\,
     \leq \frac{C L}{\delta} v^{\gamma-\alpha-1}(\delta v^{\alpha}t)^{1+d-m}\\
     &\leq C(L,\delta) v^{\gamma-\alpha-1} \leq \frac 1 4
     \end{align*}
     if $R>0$ is sufficiently large.
If $v^{\gamma-1}t\leq  \frac{1}{4LC}$, (\ref{negligible term derivative of v}) becomes
      \begin{align}\label{small mention}
     |\partial_{z}\Phi(\psi(X)-\psi(x),v) \psi'(X)\partial_{v}X|\leq \frac{CLv^{\gamma-1}t}{1+(\delta v^{\alpha}t)^{m-d}}\leq\frac{1}{4}.
     \end{align}
     \begin{enumerate}
    \item[2)] Assume $x\geq (1+2\delta) v^{\alpha}t$.
 Then $|X(x,v,t)| \geq |x- (1+\delta) v^{\alpha}t | = x-(1+\delta) v^{\alpha}t \geq \delta v^{\alpha}t $ and we can conclude as before. 

\end{enumerate}

\textbf{Proof of \eqref{dvVestimate problematic region} and \eqref{dvVestimate decreasing}:} If $x\in[(1-2\delta) v^{\alpha}t,(1+2\delta) v^{\alpha}t]$, then from \eqref{negligible term derivative of v} it follows that
 \begin{align*}
      |\partial_{z}\Phi(\psi(X)-\psi(x),v) \psi'(X)\partial_{v}X|\leq \frac{36Lv^{\gamma-1}t}{1+|X(t)|^{m-d}}\leq 36 L v^{\gamma-1}t
\end{align*}
and \eqref{dvVestimate problematic region} follows.

In order to prove that \eqref{dvVestimate decreasing} holds, we notice that
\begin{align*}
  \partial_{z}\Phi(\psi(X)-\psi(x),v) \psi'(X)\partial_{v}X\leq 0.  
\end{align*}
Combining this with \eqref{dervv} and \eqref{der in v of phi}, the conclusion follows.
\end{proof}

%\begin{prop}[Properties for the solution of the approximated model]\label{upper lower bound proposition}
% For $L>0$ let  $G_{\textup{char}}$ be the solution of \eqref{eq char}.
%There exist $C_i$, $i=1,\dots,4$, with the following property: Given $L>0$ there exist $R>0$ and a sufficiently small $T>0$ such that 
%\begin{align}
%\frac{C_{1}}{1+|x-C_{3}v^{\alpha}t|^{m}+v^{p}}&\leq G_{\textup{char}}(x,v,t)\leq \frac{C_{2}}{1+|x-C_{4}v^{\alpha}t|^{m}+v^{p}}.\label{upper lower bound for supersolution}
%\end{align}
%Moreover, if we have in addition that $x\not\in[\frac 1 2 v^{\alpha}t,(2 v^{\alpha}t]$,  then in holds that
%\begin{align}
%C_{1}\partial_{v}\bigg(\frac{1}{1+|x-C_{3}v^{\alpha}t|^{m}+v^{p}}\bigg)&\leq \partial_{v}G_{\textup{char}}(x,v,t)\leq
%C_{2}\partial_{v}\bigg(\frac{1}{1+|x-C_{4}v^{\alpha}t|^{m}+v^{p}}\bigg).\label{upper lower bound for supersolution derivative}
%\end{align}
%\end{prop}

%\begin{proof}[Proof of Proposition \ref{upper lower bound proposition}]

With the help of the characteristics, the solution  $G_{L}$ of (\ref{eq char}) can be written as 
\begin{align}\label{form solution characteristics}
G_{L}(x,v,t)={\frac{C_0}{1+|{X}(x,v,t)|^{m}+{V}(x,v,t)^{p}}}.
\end{align}
%and with \eqref{Vestimate} and \eqref{Xtau} we obtain that there exist constants $C_1,C_2>0$ such that 
%\begin{align}\label{Gcharestimate}
% \frac{C_2}{1+ \big( (1+\delta) v\big)^p |\tau(x,v)-t|^m+v^p} \leq G_{\textup{char}}(x,v,t) \leq 
% \frac{C_1}{1+ \big( (1-\delta) v\big)^p|\tau(x,v)-t|^m + v^p}\,.
%\end{align}

Moreover, it holds that
\begin{align}\label{properties derivative g 1}
\partial_{v}G_{L}(x,v,t)=-{ C_0} \frac{[m|X|^{m-2}X\partial_{v}{X}+p{V}^{p-1}\partial_{v}{V}]}{(1+|X|^{m}+{V}^{p})^{2}} \, \big (x,v,t\big).
\end{align}

This function $G_{L}$ will be the main building block for constructing a uniform supersolution to the sequence $\{f_n\}_{n \in \N}$ in Subsection \ref{subsection supersolution}. We would like this supersolution to be decreasing in $v$ for fixed $x$. Unfortunately, for $x>0$ the function $G_{L}$ is not decreasing in $v$. The next proposition characterizes a local maximum of $G_{L}$.

\begin{prop}
$a)$ Given $L>0$ and $\delta\in\big(0,\frac{1}{2}\big)$ there exists a sufficiently large $R>0$ such that for all $t\in[0,1]$ the following holds. For every $x\in\mathbb{R}$ and $t\in[0,1]$ there exists 
at least one point $v_{\max}(x,t)$ with the properties that
\begin{align}\label{first property v max}
v_{\textup{max}}(x,t)^{\alpha}\notin\bigg[\frac{x}{(1+2\delta)t},\frac{x}{(1-2\delta)t}\bigg]
\end{align}
and
\begin{align}\label{definition v max}
 \partial_v G_{L}(x,v_{\max}(x,t),t)=0\,.
\end{align}
Moreover, there exists a constant $K_{max}>0$, which is independent of $x$, $t$, $\delta$, $R$, and $L$, such that the following holds:
\begin{align}\label{vmaxestimate}
 \frac{1}{K_{max}} x t^{\frac{1}{m-1}} \leq v_{\max}(x,t)^{\alpha}
 \leq K_{max} x t^{\frac{1}{m-1}}.
\end{align}
$b)$ Given $L>0$ and $\delta\in\big(0,\frac{1}{2}\big)$ there exists a sufficiently large $R>0$ such that for all $t\in[0,T]$, with $T$ sufficiently small, that is independent of $L,\delta,$ and $R$, there exists a unique point  $v_{\textup{max}}$ with the properties \eqref{first property v max} and \eqref{definition v max}.
\end{prop}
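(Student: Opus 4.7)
By \eqref{properties derivative g 1}, the condition \eqref{definition v max} is equivalent to $H(v)=0$, where
\[
H(v):=m|X|^{m-2} X\,\partial_v X + p V^{p-1}\,\partial_v V,\qquad X=X(x,v,t),\ V=V(x,v,t).
\]
Since $m$ is even, $|X|^{m-2}X$ has the sign of $X$, and $\partial_v X<0$ by \eqref{dvXestimate}, so the first summand has the sign of $-X$; the second has the sign of $\partial_v V$ because $V>0$. By \eqref{Xestimate}, in the Case-A region $\mathcal R:=\{v:v^\alpha<x/((1+2\delta) t)\}$ one has $X\geq x-(1+\delta)v^\alpha t>0$, and by \eqref{dvVestimate} one has $\partial_v V\geq 1/4$, so the two summands have opposite signs and a zero is possible. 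In the twin region $\{v^\alpha>x/((1-2\delta) t)\}$, \eqref{Xestimate} gives $X<0$ while $\partial_v V\geq 1/4$ still holds by \eqref{dvVestimate}, so both summands are positive and $H>0$. Hence any $v_{\max}$ satisfying \eqref{first property v max}--\eqref{definition v max} must lie in $\mathcal R$, which already forces $x>0$; for $x\leq 0$ the region $\mathcal R$ is empty, $G_L$ is strictly decreasing in $v$ and the statement is vacuous, so I restrict to $x>0$.

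\textbf{Existence and estimate \eqref{vmaxestimate} (part a).} In $\mathcal R$, as $v\to 0^+$ the first term of $H$ diverges like $-m\alpha\,x^{m-1} v^{\alpha-1} t$ while the second vanishes like $p v^{p-1}$ since $p>1$, so $H(v)\to -\infty$. For the unperturbed case $L=0$ one has $V=v$ and $X=x-v^\alpha t$, and $H_0(v)=0$ admits the unique positive solution
\[
v_0(x,t)^\alpha \,=\, \frac{x\,t^{1/(m-1)}}{1+t^{m/(m-1)}},
\]
which for $t\in[0,1]$ lies in $[xt^{1/(m-1)}/2,\ xt^{1/(m-1)}]$ and, as $2\delta\,t^{m/(m-1)}<1$, strictly inside $\mathcal R$. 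For general $L>0$ I substitute $v^\alpha=c\,v_0^\alpha$ and, using the multiplicative bounds \eqref{Vestimate}--\eqref{dvVestimate}, show that the ratio $m|X|^{m-1}|\partial_v X|/(p V^{p-1}\partial_v V)$ is, up to multiplicative constants depending only on $m,\alpha,p$, of order $1/c^{m-1}$. Hence universal constants $c_1<1<c_2$ (independent of $x,t,\delta,R,L$) exist so that $H<0$ at $v^\alpha=c_1 v_0^\alpha$ and $H>0$ at $v^\alpha=c_2 v_0^\alpha$, both points lying in $\mathcal R$. The intermediate value theorem then produces a zero $v_{\max}\in\mathcal R$ with $c_1\leq v_{\max}^\alpha/v_0^\alpha\leq c_2$, which yields \eqref{first property v max} and \eqref{vmaxestimate} with $K_{\max}:=2\max(c_2,1/c_1)$.

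\textbf{Uniqueness for $T$ small (part b) and main obstacle.} Differentiating $H$ in $v$ on $\mathcal R$ gives
\[
H'(v) = m(m-1)X^{m-2}(\partial_v X)^2 + m X^{m-1}\partial_v^2 X + p(p-1)V^{p-2}(\partial_v V)^2 + p V^{p-1}\partial_v^2 V,
\]
whose first and third terms are strictly positive. For $L=0$ one has $\partial_v^2 V\equiv 0$ and $\partial_v^2 X=\alpha(1-\alpha)v^{\alpha-2}t\geq 0$, so $H'>0$ unconditionally and the zero is unique. For $L>0$, both second-derivative corrections carry extra powers of $t$ arising from the time integration in \eqref{the original characteristics}, so on the range $v^\alpha\sim xt^{1/(m-1)}$ where any zero lies (by the previous step) they are dominated by the two positive terms provided $T$ is small enough; hence $H'>0$ on the relevant interval of $\mathcal R$ and $v_{\max}$ is unique. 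The main technical difficulty is precisely this quantitative control of $\partial_v^2 X$ and $\partial_v^2 V$, which are not provided by Proposition~\ref{properties of the characteristics}: one must differentiate the ODE system \eqref{the original characteristics} a second time and repeat the Gronwall-type estimates that led to \eqref{dvXestimate}--\eqref{dvVestimate}, again exploiting the smallness of $R^{\gamma-\alpha-1}$ for $R$ large, as in \eqref{derivative in problematic region}.
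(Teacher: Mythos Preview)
Your proposal is essentially correct and closely parallels the paper's argument, with two minor differences worth noting.

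For part a), the paper proceeds exactly through your function $H(v)=m|X|^{m-2}X\partial_vX+pV^{p-1}\partial_vV$: it dismisses the region $v^\alpha\geq x/((1-2\delta)t)$ by the same sign argument you give (Case~2), and in the region $v^\alpha\leq x/((1+2\delta)t)$ (Case~1) it derives \eqref{vmaxestimate} directly by inserting the bounds of Proposition~\ref{properties of the characteristics} into the equality $m|X|^{m-1}|\partial_vX|=pV^{p-1}\partial_vV$ at a critical point, rather than via your reference solution $v_0$ and the scaling $c\mapsto 1/c^{m-1}$ of the ratio. The two computations are equivalent; your IVT argument actually makes existence more explicit than the paper does (the paper only shows that any zero satisfying \eqref{first property v max} obeys \eqref{vmaxestimate}). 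One small caveat: your probe point $(c_2v_0^\alpha)^{1/\alpha}$ need not stay in $\mathcal R$ for $t$ near~$1$, so the IVT step requires a touch more care there; the cleanest fix is to argue as the paper implicitly does, using only the equality at a zero for the estimate and a separate sign check at the boundary of $\mathcal R$ for existence.

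For part b), the paper proves the slightly stronger statement $\partial_v^2G_L<0$ throughout the range \eqref{vmaxestimate} (Appendix~\ref{appendix c}), which in your notation is $QH'>2H^2$; your condition $H'>0$ is weaker but already sufficient for uniqueness of the zero. The technical core is identical in both approaches: one must bound $\partial_v^2X$ and $\partial_v^2V$ by differentiating \eqref{the original characteristics} once more, and the paper carries this out exactly along the lines you sketch, exploiting $v\geq R$, $\gamma<1+\alpha$, and the extra factor of $t$ gained from the time integral.
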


\begin{proof}
 Let $v^{\alpha} \notin \Big [ \frac{x}{(1+2\delta)t}, \frac{x}{(1-2\delta)t}\Big]$ such that (\ref{definition v max}) holds. Notice that from (\ref{properties derivative g 1}), it follows that $m|X|^{m-2}X\partial_{v}{X}+p{V}^{p-1}\partial_{v}{V}=0$. If $R$ is sufficiently large, then the estimates from Proposition \ref{properties of the characteristics} hold. We have the following cases.
 
 \bigskip
 \textbf{Case 1.} $x-(1+2\delta)v^{\alpha}t\geq 0$. Then, by (\ref{Xestimate}), it holds that $0\leq x-(1+\delta) v^{\alpha}t \leq X (x,v,t) \leq x-(1-\delta) v^{\alpha}t$ and by \eqref{dvXestimate}, it follows that $\partial_{v}X\leq 0$. Thus, if \eqref{definition v max} holds, we have that $m|X|^{m-1}|\partial_{v}X|=pV^{p-1}\partial_{v}V$. By Proposition \ref{properties of the characteristics} and since $\delta\in(0,\frac{1}{2})$, we have that there exists a constant $C>0$, that is independent of $\delta, L$ and $R$, such that 
 \begin{align*}
     \frac{1}{C}|x-(1+\delta)v^{\alpha}t|^{m-1}v^{\alpha-1}t\leq m|X|^{m-1}|\partial_{v}X|=pV^{p-1}\partial_{v}V\leq Cv^{p-1}.
 \end{align*}
This implies that $ \frac{1}{C}|x-(1+\delta)v^{\alpha}t|^{m-1}t\leq v^{\alpha(m-1)}.$ Since $m-1$ is odd, $\delta<1$, and $x-(1+2\delta)v^{\alpha}t\geq 0$, we further have that $xt^{\frac{1}{m-1}}\leq Cv^{\alpha}+2v^{\alpha}t^{\frac{m}{m-1}}$. Since $t\leq 1$, we then obtain that  $xt^{\frac{1}{m-1}}\leq Cv^{\alpha}$ and the lower bound in (\ref{vmaxestimate}) follows.
In order to obtain the upper bound in (\ref{vmaxestimate}), we use similar computations together with the fact that $t\geq 0$ and that
  \begin{align*}
C|x-(1-\delta)v^{\alpha}t|^{m-1}v^{\alpha-1}t\geq m|X|^{m-1}|\partial_{v}X|=pV^{p-1}\partial_{v}V\geq \frac{1}{C}v^{p-1}.
 \end{align*}

 \bigskip
 \textbf{Case 2.} $x-(1-2\delta)v^{\alpha}t\leq 0$. Then, by (\ref{Xestimate}), it holds that $ x-(1+\delta) v^{\alpha}t \leq X (x,v,t) \leq x-(1-\delta) v^{\alpha}t\leq 0$ and thus $m|X|^{m-2}X\partial_{v}{X}+p{V}^{p-1}\partial_{v}{V}=m|X|^{m-1}|\partial_{v}{X}|+p{V}^{p-1}\partial_{v}{V}>0$ in this region. Using this and \eqref{properties derivative g 1}, it follows that $\partial_{v}G_{L}(x,v,t)<0$.

In Appendix \ref{appendix c}, Proposition \ref{0 prop second derivative}, we will prove that for a sufficiently small $T>0$, which is independent of $L,\delta,$ and $R$, we have that
\begin{align}\label{second derivative is monotone}
\partial_{v}^{2}G_{L}(x,v,t)< 0,
\end{align}
for all $v$ that satisfy the estimate in \eqref{vmaxestimate} and all $t\in[0,T]$ which implies the uniqueness of such a point. This concludes our proof.
\end{proof}

The following lemma will also be needed in the construction of a supersolution in Section 
\ref{subsection supersolution}.

\begin{prop}\label{derivatives in x variable}
 Given $L>0$ and $\delta\in\big(0,\frac{1}{2}\big)$ there exists a sufficiently large $R>0$ such that for all $t\in[0,T]$, with $T$ sufficiently small, which is independent of $L,\delta,$ and $R$, it holds that
\begin{align}
\partial_{x}G_{L}(x,v_{\textup{max}}(x,t),t)\leq 0,\label{upper lower bound for supersolution derivatives in x}
\end{align}
where $G_{L}$ is the solution of (\ref{eq char}) and $v_{\textup{max}}(x,t)$ was defined in (\ref{definition v max}).
\end{prop}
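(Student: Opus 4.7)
The plan is to short-circuit the computation by using the PDE \eqref{eq char} itself. Evaluating \eqref{eq char} at the point $(x, v_{\textup{max}}(x,t), t)$ kills the $\partial_v$-term by \eqref{definition v max}, giving
\begin{equation*}
\partial_x G_L(x, v_{\textup{max}}, t) = -\frac{1}{v_{\textup{max}}^{\alpha}}\, \partial_t G_L(x, v_{\textup{max}}, t),
\end{equation*}
which is legitimate since $v_{\textup{max}}(x,t)>0$ by the lower bound in \eqref{vmaxestimate}. Thus the claim $\partial_x G_L \leq 0$ is equivalent to $\partial_t G_L(x, v_{\textup{max}}, t) \geq 0$.

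Next I would differentiate the explicit representation \eqref{form solution characteristics} in $t$ and substitute the characteristic ODEs \eqref{the original characteristics} to obtain
\begin{equation*}
\partial_t G_L(x,v,t) = \frac{C_0}{(1 + |X|^m + V^p)^2}\bigg[m|X|^{m-2}X\, V^{\alpha} + \frac{pL\, V^{p+\gamma-1}\xi_R(V)}{1+|X|^{m-d}}\bigg],
\end{equation*}
where $X=X(x,v,t)$ and $V=V(x,v,t)$. The second summand is manifestly non-negative, so the entire task reduces to establishing $X(x, v_{\textup{max}}, t) \geq 0$.

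To sign $X$ at $v_{\textup{max}}$ I would re-use the case split already used in the previous proposition. The exclusion \eqref{first property v max} confines $v_{\textup{max}}$ to one of two regimes. In Case 2, $x - (1-2\delta) v^{\alpha} t \leq 0$, the bound \eqref{Xestimate} forces $X \leq 0$, and the signs $\partial_v X \leq 0$ (from \eqref{dvXestimate}) together with $\partial_v V \geq \tfrac{1}{4}$ (from \eqref{dvVestimate}, applicable since we are outside the problematic interval) make $\partial_v G_L(x,v,t) < 0$ pointwise; hence \eqref{definition v max} cannot hold in this regime. Therefore $v_{\textup{max}}$ must sit in Case 1, $x - (1+2\delta) v^{\alpha} t \geq 0$, and \eqref{Xestimate} then delivers $X \geq x - (1+\delta) v_{\textup{max}}^{\alpha} t \geq \delta\, v_{\textup{max}}^{\alpha} t \geq 0$.

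The main potential obstacle, namely signing the $|X|^{m-2}X\, V^{\alpha}$ term, is thus completely absorbed into the previous proposition's localization of $v_{\textup{max}}$, and the conclusion $\partial_x G_L(x, v_{\textup{max}}, t) \leq 0$ follows by combining the three displays. The only delicate point is the degenerate range $x \leq 0$, where the lower bound in \eqref{vmaxestimate} is vacuous and no interior critical point of $G_L(x,\cdot,t)$ exists (the Case-2 argument applies globally), so the assertion holds by default.
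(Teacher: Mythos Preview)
Your proof is correct and takes a genuinely different route from the paper. The paper computes $\partial_x G_L$ directly from the representation \eqref{form solution characteristics}, which forces it to establish $\partial_x X(x,v_{\max},t)\geq 0$ and $\partial_x V(x,v_{\max},t)\geq 0$; both require differentiating the implicit characteristic relation \eqref{tintegral} and the integrated ODE for $V$ with respect to the parameter $x$. You instead exploit the PDE \eqref{eq char} at $v=v_{\max}$ to trade $\partial_x G_L$ for $-v_{\max}^{-\alpha}\partial_t G_L$, and then read off $\partial_t G_L$ from the \emph{explicit} characteristic ODEs \eqref{the original characteristics}, which give $\partial_t X=-V^{\alpha}$ and $\partial_t V\leq 0$ without any implicit differentiation. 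Both arguments ultimately hinge on the same geometric fact, $X(x,v_{\max},t)\geq 0$; the paper obtains it from the upper bound in \eqref{vmaxestimate} together with smallness of $t$, while you recycle the Case~1/Case~2 dichotomy from the preceding proposition to place $v_{\max}$ in the region $x\geq (1+2\delta)v_{\max}^{\alpha}t$. Your approach is shorter and avoids the somewhat delicate computation of $\partial_x X$; the paper's approach has the minor advantage of yielding the auxiliary facts $\partial_x X\geq 0$ and $\partial_x V\geq 0$ explicitly, but these are not used elsewhere.
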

\begin{proof}
If $R$ is sufficiently large, then the estimates from Proposition \ref{properties of the characteristics} hold. We first notice that due  to \eqref{Xestimate} and \eqref{vmaxestimate} we have
\[
 X(x,v_{\max}(x,t),t) \geq x - 2 v_{\max}(x,t)^{\alpha} t \geq x - 2K_{max}x t^{1+\frac{1}{m-1}} >0
\]
if $t\in [0,T]$ and $T$ is sufficiently small. Then we compute
\begin{align*}
    \partial_{x}G_{L}(x,v,t)=-C_{0}\frac{[m|X|^{m-2}X\partial_{x}{X}+p{V}^{p-1}\partial_{x}{V}]}{(1+|X|^{m}+{V}^{p})^{2}}(x,v,t)\,,
\end{align*}
and thus it suffices to prove that 
\begin{align}\label{what we need to prove}
    \Big(m X^{m-1}\partial_{x}{X}+p{V}^{p-1}\partial_{x}{V} \Big)(x,v_{\textup{max}}(x,t),t)\geq 0\,.
\end{align}
 
Since $V \geq 0$ we  have to prove that $\partial_{x}{X}(x,v_{\textup{max}}(x,t),t)\geq 0$ and $\partial_{x}{V}(x,v_{\textup{max}}(x,t),t)\geq 0.$ We start by analyzing $\partial_{x}{X}$.

We consider the case when $v\geq 2R$, since the other cases work similarly.  Differentiating  \eqref{tintegral} with respect to $x$, keeping in mind that $x>0$, we obtain
\begin{align}
\frac{\partial_{x}X(x,v,t)}{\Phi(\psi(X(x,v,t))-\psi(x),v)^{\alpha}}  +  \alpha\int_{x}^{X} \frac{\partial_{z}\Phi \partial_{x}\psi(x)\der \xi}{\Phi(\psi(\xi)-\psi(x),v)^{\alpha+1}}-\frac{1}{\Phi(0,v)^{\alpha}}=0.
\end{align}
We have that $\partial_z \Phi \geq 0$ due to \eqref{Phiequation}. Since $X(x,v,t)\leq x$ and by \eqref{definition characteristics solution} and \eqref{Vestimate} we obtain
\[
 \partial_x X(x,v,t) \geq \frac{\Phi(\psi(X(x,v,t))-\psi(x),v)^{\alpha}}{v^{\alpha}} \geq \frac{1}{C}>0\,.
\]

Thus, in order for (\ref{what we need to prove}) to hold, what is left to prove is that
%\begin{align}\label{derivative in x of v is positive}
 $   \partial_{x}{V}(x,v_{\textup{max}}(x,t),t)\geq 0.$
%\end{align}
Integrating (\ref{the original characteristics}) over time and differentiating with respect to $x$, we obtain    \begin{align}\label{derivative x of v positive}
     {V}(x,v,t)^{-\gamma}\partial_{x} {V}(x,v,t)&=\int_{0}^{t}\frac{{L}(m-d)|X|^{m-d-2}X\partial_{x}{X}\der\xi}{(1+{X}(\xi)^{m-d})^{2}}.
  \end{align}
  Since we have ${X}(x,v_{\max}(x,t),t)\geq 0$ and $\partial_{x}{X}(x,v,t)\geq 0$ we obtain $\partial_{x}{V}(x,v_{\textup{max}}(x,t),t)\geq 0.$ 
\end{proof}

\subsection{Construction of a supersolution for a linear reformulation of the model}\label{subsection supersolution}
In this subsection we will prove that the solution $G_{L}$ of \eqref{eq char}, after suitable modifications, is a supersolution for  problem (\ref{inductive sequence}).
\begin{defi}\label{definition h for all l}
Let $\delta\in\big(0, \frac{1}{2}\big)$, $L>0$ and $R$, which depends on $L$ and $\delta$, as in Proposition \ref{properties of the characteristics}. Let $G_{L}$ be the solution of \eqref{eq char} with given $\delta, L,$ and $ R$. We define the function $H_{L}$ via
    \begin{align*}
    H_{L}(x,v,t)=\left\{
\begin{array}{ll}
G_{L}(x,v,t), & \mbox{ if } \partial_{v}G_{L}(x,v,t)\leq 0 \; \mbox{ or } \; x \in [(1-2\delta) v^{\alpha}t, (1+2\delta) v^{\alpha}t] ;\\
G_{L}( x,v_{\textup{max}}(x,t),t), &  \textup{otherwise}, \\
\end{array}
\right.
\end{align*}
where $v_{\textup{max}}(x,t)$ was defined in (\ref{definition v max}). 
\end{defi}
 By the choice of $v_{\textup{max}}$ in (\ref{definition v max}), if $x\notin [(1-2\delta) v^{\alpha}t, (1+2\delta) v^{\alpha}t]$ and $v\leq v_{\textup{max}}(x,t)$, then we have that $H_{L}(x,v,t)=G_{L}(x,v_{\textup{max}}(x,t),t)$. Moreover, $H_{L}$ is decreasing in $v$ for fixed $x$ outside possibly a critical region where 
$x \in [(1-2\delta) v^{\alpha}t, (1+2\delta) v^{\alpha}t]$.

We first collect some properties of the function $H_{L}$, which are independent of $\delta, L,$ and $ R$. 
\begin{lem}\label{L.Hproperties}
Let $\delta\in\big(0,\frac{1}{2}\big)$, $L>0$,  and $R$ as in Proposition \ref{properties of the characteristics}. Let $T>0$ be sufficiently small, independent of $\delta,$ $L$, and $R$, and  $   H_{L}$ as in Definition \ref{definition h for all l}. Then there exists a constant $K_{2}>0$, which is independent of $\delta,$ $L$, and $R$, such that the following holds for $t \in [0,T]$.

If $x>0$ then 
\begin{align}\label{new H estimate}
    H_{L}(x,v-v',t)\leq K_{2} H_{L}(x,v,t) \textup{ for all } v'\in\bigg(0,\frac{v}{2}\bigg)
\end{align}

If $x>0$ and for all $v$ such that $v^{\alpha} \notin \big[ \frac{x}{(1+2\delta)t}, \frac{x}{(1-2\delta)t}\big]$ then there exists a sufficiently large constant $C_{l}>0$ such that
\begin{align}
-\partial_{v}H_{L}(x,v',t)  \leq -K_{2} \partial_v H_{L}(x,v,t) \qquad \mbox{ for all } v \geq \max\{R,C_{l}v_{\max}(x,t)\}\,, v'\in \Big( \tfrac{v}{2},v\Big).\label{H3}
 \end{align}

If $x\leq 0$ then 
\begin{align}\label{H4}
 -\partial_v H_{L}(x,v',t) \leq - K_{2} \partial_v H_{L}(x,v,t) \qquad \mbox{ for all } v \geq R \,, v'\in \big( \tfrac v 2 ,v\big)\,. 
\end{align}
\end{lem}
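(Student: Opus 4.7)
The plan is to unpack $H_L$ through its definition in terms of $G_L$ and the characteristic representation \eqref{form solution characteristics}, then rely systematically on the pointwise bounds of Proposition \ref{properties of the characteristics} together with the localization \eqref{vmaxestimate} of $v_{\max}$. The strategy is to reduce each claim to an estimate on $G_L$ in regions where $H_L = G_L$ for both arguments under comparison, handling the complementary regions by the constancy of $H_L$ on the plateau below $v_{\max}$.

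For \eqref{new H estimate}, since $v' \in (0,v/2)$ implies $v-v' \asymp v$, in the region where $H_L = G_L$ at both arguments the ratio $H_L(x,v-v',t)/H_L(x,v,t)$ reduces to $(1+|X(x,v,t)|^m + V(x,v,t)^p)/(1+|X(x,v-v',t)|^m + V(x,v-v',t)^p)$. Bound \eqref{Vestimate} gives $V(x,v,t) \asymp V(x,v-v',t) \asymp v$ with constants in $[1/2, 3/2]$ since $\delta<1/2$, while \eqref{xnegative}--\eqref{xpos3} yield $|X(x,v,t)|$ and $|X(x,v-v',t)|$ of the same order up to universal constants; the ratio is therefore bounded by some $K_2$ independent of $\delta, L, R$. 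In the complementary region where $H_L(x,v-v',t) = G_L(x,v_{\max}(x,t),t)$, one uses \eqref{vmaxestimate} to locate $v_{\max}$ and compares $G_L(x,v_{\max},t)$ with $G_L(x,v,t)$ by the same $X, V$ bounds.

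For \eqref{H3} and \eqref{H4}, with $v' \in (v/2,v)$, the first step is to show $H_L = G_L$ at both arguments. In case \eqref{H4}, $x \leq 0$ forces $X \leq 0$ and \eqref{dvXestimate} forces $\partial_v X \leq 0$, so $|X|^{m-2}X\partial_v X = -|X|^{m-1}|\partial_v X| \leq 0$; combined with $\partial_v V \geq 1/4$ from \eqref{dvVestimate}, the bracket in \eqref{properties derivative g 1} is non-negative, hence $\partial_v G_L \leq 0$ and $H_L = G_L$. In case \eqref{H3}, the first hypothesis keeps $v$ outside the critical strip (so \eqref{dvVestimate} applies), while $v \geq C_l v_{\max}(x,t)$ with $C_l$ large, combined with \eqref{vmaxestimate}, forces $p V^{p-1}\partial_v V \asymp v^{p-1}$ to dominate $m|X|^{m-1}|\partial_v X| \lesssim |X|^{m-1}v^{\alpha-1}t$, once again yielding $\partial_v G_L \leq 0$. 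Once reduced to $G_L$, the derivative comparison follows by substituting into \eqref{properties derivative g 1} and observing that $V, |X|, |\partial_v X|, \partial_v V$ at $v'$ and at $v$ agree up to universal multiplicative factors, by \eqref{Vestimate}, \eqref{xnegative}--\eqref{xpos3}, \eqref{dvXestimate}, and \eqref{dvVestimate} respectively.

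The chief obstacle is case \eqref{H3}. When $x>0$ the two terms in the bracket of \eqref{properties derivative g 1} almost cancel near $v_{\max}(x,t)$, so one must establish a definite sign with uniform margin simultaneously at $v$ and at $v'$. This is exactly what $C_l$ encodes: taking $C_l \geq 2$ ensures $v' \geq v/2 \geq (C_l/2) v_{\max}$, so $v'$ is also past $v_{\max}$ by a comparable amount and the dominance quantified through \eqref{vmaxestimate} transfers from $v$ to $v'$. Uniformity of $K_2$ in $\delta, L, R$ is preserved because the constants in Proposition \ref{properties of the characteristics} and \eqref{vmaxestimate} are themselves uniform once $R$ is chosen sufficiently large in terms of $L,\delta$.
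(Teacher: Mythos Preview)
Your overall strategy---reduce to $G_L$ via \eqref{form solution characteristics} and invoke Proposition~\ref{properties of the characteristics}---matches the paper's, but the execution has a real gap. You assert that \eqref{xnegative}--\eqref{xpos3} make $|X(x,v,t)|$ and $|X(x,w,t)|$ ``of the same order'' for $w\in[v/2,v]$, and base both \eqref{new H estimate} and \eqref{H3} on this. That claim is false: if $w^{\alpha}t$ lies in the interval $[x/(1+\delta),\,x/(1-\delta)]$ (a subset of the critical strip), then the bracket $[x-(1+\delta)w^{\alpha}t,\,x-(1-\delta)w^{\alpha}t]$ containing $X(x,w,t)$ straddles~$0$, so $|X(x,w,t)|$ can be arbitrarily small while $|X(x,v,t)|\geq \delta v^{\alpha}t$ stays bounded below. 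No uniform constant controls the ratio.

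The paper's remedy is not to compare $|X|$ at two arguments but to compare the full quantity $1+|X(x,w,t)|^{m}+V(x,w,t)^{p}$ to the \emph{reference} $1+|x|^{m}+v^{p}$, uniformly over $w\in[v/2,v]$; see \eqref{main inequality for v-v' h} and \eqref{main inequality for the derivative of hh}. The key observation you are missing is that whenever $|X(x,w,t)|$ degenerates (i.e.\ $w$ is near or in the critical strip), one has $x\lesssim w^{\alpha}t\leq w^{\alpha}$, hence $|x|^{m}\lesssim w^{p}\asymp v^{p}$, so the $V^{p}$ term alone already matches the reference. The paper runs a case analysis (Cases 1.a--1.c, 2) precisely to isolate this mechanism. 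The same issue infects your argument for \eqref{H3}: you invoke \eqref{dvVestimate} at $v'$, but $v'\in(v/2,v)$ can lie in the critical strip even when $v$ does not, and there only \eqref{dvVestimate decreasing} is available; the paper's Case~1.b handles exactly this, again via the reference quantity rather than direct comparison.
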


\begin{proof}
{\bf Proof of \eqref{new H estimate}:}  We will prove that (\ref{new H estimate}) holds by proving separately that there exists $K_{2}>0$, which is independent of $\delta,L,$ and $R$, such that:
 
 If $x>0$ and for all $v$ such that $v^{\alpha} \notin \big[ \frac{x}{(1+2\delta)t}, \frac{x}{(1-2\delta)t}\big]$ then 
  \begin{align}
H_{L}(x,v-v',t) & \leq K_{2} H_{L}(x,v,t) \qquad \mbox{ for all } v'\in \Big (0,\tfrac{v}{2}\Big) \mbox{ and } v,v-v'\geq v_{\textup{max}}(x,t)\,, \label{H1}\\
H_{L}(x,v_{\max}(x,t),t) & \leq K_{2} H_{L}(x,v,t)\qquad \mbox{ for all } v \in (v_{\max}(x,t), 2 v_{\max}(x,t)),\label{H2}
\end{align}
 If $x>0$ and for all $v$ such that $v^{\alpha} \in \big[ \frac{x}{(1+2\delta)t}, \frac{x}{(1-2\delta)t}\big]$ then
 \begin{align}\label{H estimate problematic region}
     H_{L}(x,v-v',t)\leq K_{2} H_{L}(x,v,t) \textup{ for all }  v'\in\bigg(0,\frac{v}{2}\bigg).
 \end{align}

Before beginning our proof, we make the following observation. Because the proof of each region when $v^{\alpha} \notin \big[ \frac{x}{(1+2\delta)t}, \frac{x}{(1-2\delta)t}\big]$ differs, we have to distinguish between different cases. 

If $v^{\alpha}\geq  \frac{x}{(1-2\delta)t}$ and  $w\in[\frac{v}{2},v]$, we have the subcases
\begin{itemize}
    \item[1. a)] $x>0$,  $v^{\alpha}\geq  \frac{x}{(1-2\delta)t}$, and $w^{\alpha}\geq \frac{x}{(1-2\delta)t}$. Notice that in this region $x-(1-\delta)w^{\alpha}t\leq 0$;
    \item[1. b)] $x>0$, $v^{\alpha}\geq  \frac{x}{(1-2\delta)t}$, and $w^{\alpha}\in\bigg[\frac{x}{(1+2\delta)t}, \frac{x}{(1-2\delta)t}\bigg]$;
    \item[1. c)] $x>0$, $v^{\alpha}\geq  \frac{x}{(1-2\delta)t}$, and $w^{\alpha}\leq  \frac{x}{(1+2\delta)t}$.  Notice that in this region $x-(1+\delta)w^{\alpha}t\geq  0$.
\end{itemize}
The remaining case is
\begin{itemize}
    \item[2.] $x>0$, $v^{\alpha}\leq \frac{x}{(1+2\delta)t}$ and then $w^{\alpha}\leq \frac{x}{(1+2\delta)t}$.
\end{itemize}

\bigskip
{\bf Proof of \eqref{H1}:} 
We will prove that there exists a constant $C>0$, which is independent of $\delta,L,$ and $R$, such that
\begin{align}\label{main inequality for v-v' h}
\frac{    1}{C(1+|x|^{m}+v^{p})}  \leq   H_{L}(x,w,t)\leq 
  \frac{ C}{1+|x|^{m}+v^{p}},
\end{align}
for all $w\in[\frac{v}{2},v]$. 

\bigskip
\textbf{Case $1.$ $a)$} Notice that because of (\ref{Xestimate}), we are in the region where $X(x,w,t)\leq 0$. Because of (\ref{xpos1}), it follows that $|x-(1-\delta)w^{\alpha}t|\leq |X(x,w,t)|\leq |x-(1+\delta)w^{\alpha}t|$. Thus, due to (\ref{form solution characteristics}), it suffices to show in this case that 
\begin{align}\label{Jestimate}
 |x-(1+\delta)w^{\alpha}t|^m+w^p \leq C\big( |x|^m + v^p\big) \leq C\bigg(  |x-(1-\delta)w^{\alpha}t|^m+w^p \bigg).
\end{align}
To prove \eqref{Jestimate} we notice that is suffices to show that $|x-(1+\delta)w^{\alpha}t|^m+w^p \leq C\big( |x|^m + w^p\big) \leq C\big(  |x-(1-\delta)w^{\alpha}t|^m+w^p \big)$ since $w \in [\frac{v}{2},v]$. Since $x-(1-\delta)w^{\alpha}t\leq 0$ in this case, it holds that
$|x-(1-\delta)w^{\alpha}t|\leq w^{\alpha}$. In this case, we have that
$|x|-(1-\delta)w^{\alpha}t\leq  |x-(1-\delta)w^{\alpha}t|\leq  w^{\alpha}$.
This implies that $|x|\leq 2 w^{\alpha}$. Notice that since $x\geq 0$ and $x-(1-\delta)w^{\alpha}t\leq 0$, it immediately follows that $0\leq x \leq w^{\alpha}$. However, we do a more general proof as a similar estimate will be needed in order to prove (\ref{H4}) or when $x-(1+\delta)w^{\alpha}\geq 0$ later on.

Since $(|x|+(1+\delta)w^{\alpha}t)^{m}\leq C_{m}(|x|^{m}+t^{m}w^{p})$ and  $t$ is sufficiently small, we obtain
\begin{align}\label{ineq}
|x-(1+\delta)w^{\alpha} t|^m + w^p &\leq (|x|+(1+\delta)w^{\alpha}t)^{m}+w^{p}\leq C_{m}(|x|^{m}+t^{m}w^{p})+w^{p}\nonumber\\
&\leq C(|x|^{m}+w^{p}).
\end{align}
Additionally, since $|x|\leq 2 w^{\alpha}$, it holds that
\begin{align}\label{ineq two}
|x-(1-\delta)w^{\alpha}t|^m + w^p \geq w^{p}=\frac{w^{p}}{2}+\frac{w^{p}}{2}\geq  \frac{1}{C}(|x|^{m}+w^{p})\,.
\end{align}

\bigskip
\textbf{Case $1.$ $b)$}
From (\ref{Xestimate}) and (\ref{xpos3}), it follows that $X(x,v,t)\leq 0$ and $|X(x,w,t)|\leq C w^{\alpha}t$. Since in this case $v^{\alpha}\geq  \frac{x}{(1-2\delta)t}$, from \eqref{Jestimate} we know that there exists a constant $C>0$, which is independent of $\delta,L,$ and $R$, such that
\begin{align*}
    \frac{ 1}{C(1+|x|^{m}+v^{p})}  \leq   H_{L}(x,v,t).
\end{align*}
Thus, in order for (\ref{H1}) to hold in this case, we need to prove that there exists a constant $C>0$, which is independent of $\delta,L,$ and $R$, such that  $H_{L}(x,w,t)\leq \frac{ C}{1+|x|^{m}+v^{p}} $, for $w\in[\frac{v}{2},v]$. More precisely, due to (\ref{form solution characteristics}), it suffices to prove that
\begin{align}\label{case 1 b}
    1+|X(x,w,t)|^{m}+V(x,w,t)^{p}\geq \frac{1}{C}(1+|x|^{m}+v^{p}).
\end{align}

Since $t\leq 1$, we have that $x\leq (1+2\delta)w^{\alpha}t\leq 2w^{\alpha}$ in this case. Due to \eqref{Vestimate}, it holds that
\begin{align*}
    |X(x,w,t)|^{m}+V(x,w,t)^{p}\geq V(x,w,t)^{p}\geq Cw^{p}=\frac{ Cw^{p}}{2}+\frac{ Cw^{p}}{2}\geq C(|x|^{m}+w^{p}).
\end{align*}

\bigskip
\textbf{Case $1.$ $c)$} From (\ref{Xestimate}), it follows that $X(x,v,t)\leq 0$ and $X(x,w,t)\geq 0$. Since in this case $v^{\alpha}\geq  \frac{x}{(1-2\delta)t}$, from \eqref{Jestimate} we know that there exists a constant $C>0$, independent of $\delta,L,$ and $R$, such that $\frac{ 1}{C(1+|x|^{m}+v^{p})}  \leq   H_{L}(x,v,t)$ and we need to prove that $  H_{L}(x,w,t)\leq \frac{ C}{1+|x|^{m}+v^{p}} $, for $w\in[\frac{v}{2},v]$. More precisely, due to (\ref{xpos2}) and (\ref{form solution characteristics}), it suffices to prove as  before that    $1+|X(x,w,t)|^{m}+V(x,w,t)^{p}\geq C(1+|x|^{m}+v^{p})$. Actually, we prove here a more general estimate that will be used in Case $2.$, namely
\begin{align}\label{positive part one}
 |x-(1-\delta)w^{\alpha}t|^m+w^p \leq C\big( |x|^m + v^p\big) \leq C\bigg(  |x-(1+\delta)w^{\alpha}t|^m+w^p \bigg)
\end{align}

In order to prove \eqref{positive part one}, we distinguish between two cases:
\begin{enumerate}
     \item [i)]
 $|x-(1+\delta)w^{\alpha}t|\geq w^{\alpha}$. In this case, we have that
$w^{\alpha}\leq |x-(1+\delta)w^{\alpha}t|\leq |x|+(1+\delta)w^{\alpha}t$, which  implies that $\frac{1}{2}w^{\alpha}\leq |x|$. 
Thus $|x|-(1+\delta)w^{\alpha}t\geq |x|-\frac{w^{\alpha}}{4}\geq \frac{|x|}{2}>0$ and $(|x|-(1+\delta)w^{\alpha}t)^{m}\geq \frac{|x|^{m}}{2^{m}}$. Since $|x|-(1+\delta)w^{\alpha}t>0$ and remembering that $m$ is even, we also have that $|x-(1+\delta)w^{\alpha}t|^{m}\geq (|x|-(1+\delta)w^{\alpha}t)^{m}$ and thus $|x-(1+\delta)w^{\alpha}t|^{m}\geq 2^{-m}|x|^{m}$.
Additionally , since $t$ is sufficiently small and  $\frac{1}{2}w^{\alpha}\leq |x|$, it holds that $|x-(1-\delta)w^{\alpha}t|\leq |x|+w^{\alpha}t\leq 2|x|$.
\item[ii)]  $|x-(1+\delta)w^{\alpha}t|\leq w^{\alpha}$. This case can be treated as in the proof of \eqref{Jestimate}.
\end{enumerate}

\bigskip
\textbf{Case $2.$}  From (\ref{Xestimate}), it follows that $X(x,v,t)\geq 0$ and $X(x,w,t)\geq 0$. Thus, from (\ref{xpos2}) and (\ref{positive part one}), the conclusion follows.

\bigskip
{\bf Proof of \eqref{H2}:} 
Notice that due to (\ref{vmaxestimate}) and since $v\in(v_{\textup{max}}(x,t),2v_{\textup{max}}(x,t))$, we have that $x-(1+2\delta)v^{\alpha}t\geq 0$ if we choose $t$ to be sufficiently small. Moreover, since $\delta<\frac{1}{2}$, $t$ can be chosen independently of $\delta$. Since $v>v_{\textup{max}}(x,t)$, it holds by (\ref{positive part one}) that $\frac{    1}{C(1+|x|^{m}+v^{p})}  \leq   H_{L}(x,v,t)$. Thus, since  $v\in(v_{\textup{max}}(x,t),2v_{\textup{max}}(x,t))$, we further have that  $\frac{    1}{C(1+|x|^{m}+v_{\textup{max}}(x,t)^{p})}  \leq   H_{L}(x,v,t)$. From (\ref{form solution characteristics}) and (\ref{xpos2}), it suffices to prove that there exists a constant $C>1$, independent of $\delta,L,$ and $R$, such that
\begin{align*}
   |x|^{m}+v_{\textup{max}}(x,t)^{p}\leq C\bigg( |x-(1+\delta)v_{\textup{max}}(x,t)^{\alpha}t|^{m}+v_{\textup{max}}(x,t)^{p}\bigg).
\end{align*}
The inequality holds since due to (\ref{vmaxestimate}) we can choose $t$ sufficiently small such that $|x-(1+\delta)v_{\textup{max}}(x,t)^{\alpha}t|=x-(1+\delta)v_{\textup{max}}(x,t)^{\alpha}t\geq \frac{x}{2}$.

\bigskip
{\bf Proof of \eqref{H estimate problematic region}:}
Due to  (\ref{form solution characteristics}) and since $v-v'\in(\frac{v}{2},v)$ it holds on one side that
\begin{align*}
 H_{L}(x,v-v',t) \leq 
 \frac{C}{1  + (v-v')^p}\leq \frac{C}{1+v^{p}},
\end{align*}
for some $C>0$, independent of $\delta,L,$ and $R$. On the other side, similarly with (\ref{xpos3}) and from (\ref{form solution characteristics}), we have that
\begin{align*}
H_{L}(x,v,t)\geq  \frac{C}{1+ v^{p}t^m+v^p}  \geq \frac{C}{1+v^p},
\end{align*}
for some $C>0$, independent of $\delta,L,$ and $R$. Combining the two inequalities, we can conclude that  \eqref{H estimate problematic region} holds.

\bigskip
{\bf Proof of \eqref{H3}:}

\bigskip

   \textbf{Case $1.$ $a)$} We will prove that in this case it holds that
\begin{align}\label{main inequality for the derivative of hh}
 \frac{1}{C}\frac{    v^{p-1}}{(1+|x|^{m}+v^{p})^{2}}  \leq  -\partial_{v} H_{L}(x,w,t)\leq 
 C \frac{ v^{p-1}}{(1+|x|^{m}+v^{p})^{2}},
\end{align}
for some $C>0$, independent of $\delta,L,$ and $R$, and for all $w\in[\frac{v}{2},v]$. We remember we are in the case when $x-(1+\delta)w^{\alpha}t\leq x-(1-\delta)w^{\alpha}t\leq 0$ and thus due to (\ref{Xestimate}) it holds that $|X|^{m-2}(x,w,t)X(x,w,t)\partial_{v}{X}(x,w,t)(x,w,t)=|X|^{m-1}|\partial_{v}X(x,w,t)|$. Due to (\ref{Vestimate})-(\ref{dvVestimate}) we thus have in this region that 
\begin{align*}
   -\frac{1}{C}(x-(1-\delta)w^{\alpha}t)^{m-1}w^{\alpha-1}t+\frac{1}{C}(1-\delta)w^{p-1}&\leq |X|^{m-2}X\partial_{v}{X}(x,w,t)+{V}^{p-1}\partial_{v}{V}(x,w,t)\\
   &\leq -C(x-(1+\delta)w^{\alpha}t)^{m-1}w^{\alpha-1}t+C(1+\delta)w^{p-1}. 
\end{align*} 
Moreover, from (\ref{Jestimate}), we have that
\begin{align}
 |x-(1+\delta)w^{\alpha}t|^m+w^p \leq C\big( |x|^m + v^p\big) \leq C\bigg(  |x-(1-\delta)w^{\alpha}t|^m+w^p \bigg).
\end{align}
In order to prove (\ref{main inequality for the derivative of hh}), due to \eqref{properties derivative g 1}, it then suffices to show in this case that 
\begin{align}\label{Jderestimate}
 |x-(1+\delta)w^{\alpha}t|^{m-1} w^{\alpha-1}t + w^{p-1} \leq C v^{p-1}\leq C\bigg(|x-(1-\delta)w^{\alpha}t|^{m-1} w^{\alpha-1}t + w^{p-1}\bigg),
\end{align}
for all $w \in [\frac{v}{2},v]$. We thus prove \eqref{Jderestimate}. We have  
\begin{align}\label{logic to get rid of w alpha minus one}
    w^{\alpha-1}t|x-(1+\delta)w^{\alpha}t|^{m-1}+w^{p-1}=w^{\alpha-1}\bigg(|x-(1+\delta)w^{\alpha}t|^{m-1} t+w^{\alpha(m-1)}\bigg).
\end{align}
Since $w\in[\frac{v}{2},v]$, it suffices to prove  
\begin{align}\label{tildeJestimate}
\tilde J:= |x-(1+\delta)w^{\alpha}t|^{m-1}t + w^{\alpha(m-1)}\leq C  w^{\alpha(m-1)}
\end{align}
and
\begin{align*}
  |x-(1-\delta)w^{\alpha}t|^{m-1}t + w^{\alpha(m-1)}\geq  \frac{1}{C}  w^{\alpha(m-1)}.
\end{align*}

We are in the case $v\geq C_{l}v_{\textup{max}}(x,t)$, for some  sufficiently large $C_{l}>0$ and $x>0$. We know that $v_{\textup{max}}(x,t)^{\alpha}\geq \frac{1}{K_{max}} xt^{\frac{1}{m-1}}$, for $K_{max}$ as in \eqref{vmaxestimate}, and thus $w^{\alpha}\geq \frac{1}{C}xt^{\frac{1}{m-1}}\geq 0$ since $w\geq \frac{v}{2}$. 

We have that $ |x-(1-\delta)w^{\alpha}t|^{m-1}t + w^{\alpha(m-1)} \geq w^{\alpha(m-1)}$.
On the other hand, since $0\leq xt^{\frac{1}{m-1}}\leq C  w^{\alpha}$ and $t\leq 1$, it follows that
\begin{align*}
     \tilde{J}(x,w,t)\leq C_{m}(|x|^{m-1}t+w^{\alpha(m-1)}t^{m})+w^{\alpha(m-1)}\leq Cw^{\alpha(m-1)}.
\end{align*}
\begin{rmk}\label{rmk all cases}
    Since $v^{\alpha}\geq \frac{x}{(1-2\delta)t}$, we know from \eqref{main inequality for the derivative of hh} that $\frac{1}{C}\frac{    v^{p-1}}{(1+|x|^{m}+v^{p})^{2}}  \leq  -\partial_{v} H_{L}(x,v,t).$ Thus, for Case $1.$ $b)$ and Case $1.$ $c)$ we need to prove that $ -\partial_{v} H_{L}(x,w,t)\leq \frac{   C v^{p-1}}{(1+|x|^{m}+v^{p})^{2}}$, for some $C>0$, which is independent of $\delta,L,$ and $R$.
\end{rmk}

\bigskip

\textbf{Case $1.$ $b)$} As mentioned above, we need to prove that $ -\partial_{v} H_{L}(x,w,t)\leq \frac{   C v^{p-1}}{(1+|x|^{m}+v^{p})^{2}}$ when $w^{\alpha}\in\bigg[\frac{x}{(1+2\delta)t}, \frac{x}{(1-2\delta)t}\bigg]$, $w\in \big[\frac{v}{2},v\big]$. More precisely, due to (\ref{properties derivative g 1}), it suffices to prove that
\begin{align*}
    1+|X(x,w,t)|^{m}+V(x,w,t)^{p}\geq C(1+|x|^{m}+v^{p})
\end{align*}
and that
\begin{align*}
    X(x,w,t)^{m-1}\partial_{v}X(x,w,t)+V(x,w,t)^{p-1}\partial_{v}V(x,w,t)\leq C v^{p-1},
\end{align*}
for all $w\in[\frac{v}{2},v]$ such that $w^{\alpha}\in\big[  \frac{x}{(1+2\delta)t},\frac{x}{(1-2\delta)t}\big]$.

 We know from (\ref{case 1 b}) that the first inequality holds and thus we focus on proving the second inequality. Due to \eqref{Vestimate} and \eqref{dvVestimate decreasing}, it follows that
\begin{align*}
   X(x,w,t)^{m-1}\partial_{v}X(x,w,t)+V(x,w,t)^{p-1}\partial_{v}V(x,w,t) \leq  X(x,w,t)^{m-1}\partial_{v}X(x,w,t)+Cw^{p-1},
\end{align*}
where $C>0$ is independent of $\delta,L,$ and $ R$. We now analyze the term $X(x,w,t)^{m-1}\partial_{v}X(x,w,t)$. From \eqref{xpos3} and \eqref{dvXestimate}, it holds that
\begin{align*}
     X(x,w,t)^{m-1}\partial_{v}X(x,w,t)\leq  |X(x,w,t)|^{m-1}|\partial_{v}X(x,w,t)|\leq Cw^{\alpha(m-1)}w^{\alpha-1}t^{m}\leq Cw^{p-1},
\end{align*}
for some $C>0$, which is independent of $\delta,L,$ and $ R$, and we can conclude by using that $w\in\big[\frac{v}{2},v\big]$.

\bigskip 

 \textbf{Case $1.$ $c)$}   As before, by Remark \ref{rmk all cases}, we only need to prove that    $1+|X(x,w,t)|^{m}+V(x,w,t)^{p}\geq C(1+|x|^{m}+v^{p})$ and that
    $X(x,w,t)^{m-1}\partial_{v}X(x,w,t)+V(x,w,t)^{p-1}\partial_{v}V(x,w,t)\leq C v^{p-1}$ when $w^{\alpha}\leq \frac{x}{(1+2\delta)t}$, $w\in \big[\frac{v}{2},v\big]$. 

By \eqref{Xestimate} and \eqref{dvXestimate} it holds that $X(x,w,t)\geq 0$ and $\partial_{v}X(x,w,t)\leq 0$ in this region. Moreover, due to \eqref{Vestimate} and \eqref{dvVestimate}, it follows that
\begin{align*}
    X(x,w,t)^{m-1}\partial_{v}X(x,w,t)+V(x,w,t)^{p-1}\partial_{v}V(x,w,t)\leq V(x,w,t)^{p-1}\partial_{v}V(x,w,t)\leq C w^{p-1} .
\end{align*}

The fact that $1+|X(x,w,t)|^{m}+V(x,w,t)^{p}\geq C(1+|x|^{m}+v^{p})$ in this region follows from \eqref{positive part one}.

\bigskip

\textbf{Case $2.$} We will prove that \eqref{main inequality for the derivative of hh} holds in this case too. Due to (\ref{Xestimate}), we have that $X(x,v,t),$ $X(x,w,t)\geq 0$ in this region. Thus, from (\ref{xpos2}), it holds that $|x-(1+\delta)w^{\alpha}t|\leq |X(x,w,t)|\leq |x-(1-\delta)w^{\alpha}t|.$ We know from (\ref{positive part one}) that 
$|x-(1-\delta)w^{\alpha}t|^m+w^p \leq C\big( |x|^m + v^p\big) \leq C\bigg(  |x-(1+\delta)w^{\alpha}t|^m+w^p \bigg)$, for some constant $C>0$, which is independent of $\delta,L,$ and $R$. Moreover, since $X(x,w,t)\geq 0$ and $\partial_{w}X(x,w,t)\leq 0$, we have that $X(x,w,t)\partial_{w}X(x,w,t)=-|X(x,w,t)| |\partial_{w}X(x,w,t)|.$ Thus, due to  \eqref{properties derivative g 1}, it suffices to show in this case that there exists $C>0$, independent of $\delta,L,$ and $R$ such that
\begin{align}\label{positive part two}
 -|x-(1+\delta)w^{\alpha}t|^{m-1} w^{\alpha-1}t + w^{p-1} \leq C v^{p-1}\leq C\bigg(-|x-(1-\delta)w^{\alpha}t|^{m-1} w^{\alpha-1}t + w^{p-1}\bigg),
\end{align}
for all $w \in [\frac{v}{2},v]$.

We remember we are in the case when $x-(1+\delta)w^{\alpha}t\geq 0$. It is clear that $-|x-(1+\delta)w^{\alpha}t|^{m-1} w^{\alpha-1}t + w^{p-1} \leq C v^{p-1}$. For the other inequality, due to \eqref{logic to get rid of w alpha minus one} it suffices to prove the statement for $-|x-(1-\delta)w^{\alpha}t|^{m-1} t + w^{\alpha(m-1)} $. Since $x-(1-\delta)w^{\alpha}t\geq 0$ and using that $a^{m-1}+b^{m-1}\leq (a+b)^{m-1}$, for $a,b\geq 0$, we have that $x^{m-1}-((1-\delta)w^{\alpha}t)^{m-1}\geq (x-(1-\delta)w^{\alpha}t)^{m-1}$. Thus, it holds that
\begin{align}\label{ineqqqqqq}
    -|x-(1-\delta)w^{\alpha}t|^{m-1} t + w^{\alpha(m-1)} \geq -|x|^{m-1}t+(1-\delta)^{m-1}w^{\alpha(m-1)}t^{m}+w^{\alpha(m-1)}.
\end{align}
Since $w\geq \frac{v}{2}\geq \frac{C_{l}}{2}v_{\textup{max}}(x,t)$ in this case we have that $xt^{\frac{1}{m-1}}\leq \frac{2^{\alpha}K_{max}w^{\alpha}}{C_{l}^{\alpha}}$, for a sufficiently large constant $C_{l}>0$. (\ref{ineqqqqqq}) thus becomes
\begin{align*}
    -|x-(1-\delta)w^{\alpha}t|^{m-1} t + w^{\alpha(m-1)} \geq -\frac{2^{p-\alpha}K_{max}^{m-1}w^{\alpha(m-1)}}{C_{l}^{p-\alpha}}+w^{\alpha(m-1)}\geq \frac{w^{\alpha(m-1)}}{2},
\end{align*}
for $C_{l}$ sufficiently large, thus concluding our proof.

\bigskip

\bigskip
{\bf Proof of \eqref{H4}:}
In order to prove \eqref{H4}, it is useful to notice that if $x\leq 0$, then \eqref{xnegative} holds. We will prove that there exists $C>0$, which is independent of $\delta,L,$ and $R$, such that
\begin{align}\label{h estimate x negative}
  \frac{1}{C}\frac{ v^{\alpha-1}|x|^{m-1}t+v^{p-1}}{(1+|x|^{m}+v^{p})^{2}} \leq - \partial_{v} H_{L}(x,w,t)\leq C\frac{ v^{\alpha-1}|x|^{m-1}t+v^{p-1}}{(1+|x|^{m}+v^{p})^{2}},
\end{align}
for all $w\in[\frac{v}{2},v]$. Using similar computations as the ones for \eqref{positive part one}, we can prove that
\begin{align}\label{x neg below}
 (|x|+(1+\delta)w^{\alpha}t)^m+w^p \leq C\big( |x|^m + v^p\big) \leq C\bigg(  (|x|+(1-\delta)w^{\alpha}t)^m+w^p \bigg).
\end{align}
 Since $X(x,w,t)\leq 0$ and $\partial_{w}X(x,w,t)\leq 0$ we have that $X(x,w,t)\partial_{w}X(x,w,t)=|X(x,w,t)|$ $|\partial_{w}X(x,w,t)|$. Due to \eqref{xnegative} and \eqref{properties derivative g 1}, what is left to prove in order for (\ref{h estimate x negative}) to hold is that
\begin{align}\label{x neg above}
 (|x|+(1+\delta)w^{\alpha}t)^{m-1} w^{\alpha-1}t + w^{p-1}& \leq C  (v^{\alpha-1}|x|^{m-1}t+v^{p-1})\nonumber\\
 &\leq C\bigg((|x|+(1-\delta)w^{\alpha}t)^{m-1} w^{\alpha-1}t + w^{p-1}\bigg),
\end{align}
for some $C>0$, independent of $\delta,L,$ and $R$, for all $w \in [\frac{v}{2},v]$.

Thus, we only need to prove \eqref{x neg above}. For $x\leq 0$ and $w\in [\frac{v}{2},v]$, we have that $  (|x|+(1-\delta)w^{\alpha}t)^{m-1}t+w^{\alpha(m-1)}\geq |x|^{m-1}t + w^{\alpha(m-1)}$.
Furthermore, since  $t\leq 1$, it follows that 
\begin{align*}
    (|x|+(1+\delta)w^{\alpha}t)^{m-1}t+w^{\alpha(m-1)}&\leq C_{m}|x|^{m-1}t+C_{m}w^{\alpha(m-1)}t^{m}+w^{\alpha(m-1)}\\
    &\leq C(|x|^{m-1}t+w^{\alpha(m-1)}).
\end{align*}

\end{proof}
 We also prove some moment bounds for the function $H_{L}$, which are independent of $\delta, L,$ and $R$. 
\begin{lem}[Moment estimates]\label{estimate for mass for h}
Let $T>0$ be sufficiently small, independent of $\delta,L$, and $R$. Then there exists $K_{3}>0$, which is independent of $\delta\in\big(0,\frac{1}{2}\big)$, $L$, and $R$, such that for all $t \in [0,T]$ we have 
\begin{align}\label{estimate for moment for h}
M_{1,L}(x,t):=\int_{(0,\infty)}v
H_{L} (x,v,t)\der v &\leq \frac{K_{3}C_{0}}{1+|x|^{m-d}} \qquad \textup{ for }x\in\mathbb{R}\,, t \geq 0.
\end{align}
In general, if $p>\max\{2\gamma+1,2\}$, we have that
\begin{align}\label{estimate for all moment for h}
M_{n,L}(x,t):=\int_{(0,\infty)}v^{n}H_{L}(x,v,t)\der v &\leq \frac{K_{3}C_{0}}{1+|x|^{m-\frac{n+1}{\alpha}}} \qquad  \textup{ for }x\in\mathbb{R}\,, t \geq 0\,,
\end{align}
for $n\in[0,\max\{2\gamma,1\}]$. 
\end{lem}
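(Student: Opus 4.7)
My plan is to exploit the piecewise description of $H_L$ in Definition \ref{definition h for all l} together with the explicit representation (\ref{form solution characteristics}) and the characteristic estimates of Proposition \ref{properties of the characteristics}, splitting the $v$-integral into regions on which the integrand admits a transparent upper bound. The key observation is that, for $t$ sufficiently small, the flat-top threshold $v_{\max}(x,t)$ satisfies $v_{\max}^{\alpha} \sim x\, t^{1/(m-1)}$ and sits well below the critical band $v^{\alpha}\in[x/((1+2\delta)t),x/((1-2\delta)t)]$, so that the two regions can be treated separately.

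For $x\leq 0$ I first note that $G_L$ is monotonically decreasing in $v$: the signs of $X$, $\partial_v X$, and $\partial_v V$ forced by (\ref{dvXestimate})--(\ref{dvVestimate}) together with (\ref{xnegative}) make both $|X|^{m-2}X\partial_v X$ and $V^{p-1}\partial_v V$ non-negative, so no flat-top region appears and $H_L\equiv G_L$. Combining (\ref{Vestimate}) and (\ref{xnegative}) then yields $H_L(x,v,t)\leq CC_0/(1+|x|^m+v^p)$; splitting the integral at the crossover $v_\ast=|x|^{1/\alpha}$ where $|x|^m\sim v^p$, both resulting pieces are controlled by $CC_0/(1+|x|^{m-(n+1)/\alpha})$ as soon as $n+1<p$, which is exactly what the combined hypotheses $p>\max\{2\gamma+1,2\}$ and $n\leq\max\{2\gamma,1\}$ provide.

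For $x>0$ I would split according to $v=v_{\max}(x,t)$ and separately carve out the critical band. On the flat-top piece $\{v\leq v_{\max}\}$ (which lies outside the band), $H_L$ equals the constant $G_L(x,v_{\max},t)$. The combination of (\ref{Xestimate}) and (\ref{vmaxestimate}) gives $X(x,v_{\max},t)\geq x-2K_{\max}xt^{1+1/(m-1)}\geq x/2$ once $T$ is small, so $G_L(x,v_{\max},t)\leq C/(1+x^m)$; multiplying by $v_{\max}^{n+1}\leq C(xt^{1/(m-1)})^{(n+1)/\alpha}$, again from (\ref{vmaxestimate}), produces the decay $\lesssim 1/(1+x^{m-(n+1)/\alpha})$. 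On the tail $\{v\geq v_{\max}\}$ one has $H_L=G_L\leq C_0/(1+v^p)$ by (\ref{Vestimate}), and the matching lower bound on $v_{\max}$ converts $\int_{v_{\max}}^\infty v^{n-p}\,dv$ into the same decay. Finally, the critical band has length $\sim\delta (x/t)^{1/\alpha}$ while $v^p\sim (x/t)^m$ on it, so a direct product estimate again gives $\lesssim 1/x^{m-(n+1)/\alpha}$.

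Summing the three contributions yields (\ref{estimate for all moment for h}); specializing to $n=1$ and using $d\geq 2/\alpha$ immediately gives the weaker bound (\ref{estimate for moment for h}). The main obstacle I anticipate is the flat-top piece for $x>0$: the naive pointwise identity $H_L\equiv G_L(x,v_{\max},t)$ is a constant in $v$ and by itself carries no decay, so the correct bound emerges only through the delicate cancellation between $v_{\max}^{n+1}$ and $G_L(x,v_{\max},t)$ afforded by (\ref{vmaxestimate}) and by the lower bound $X(x,v_{\max},t)\geq x/2$, both of which require $T$ to be suitably small but independent of $\delta,L,R$. Once this matching is in place, the remaining pieces reduce to the one-variable integrals handled above.
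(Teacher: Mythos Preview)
Your treatment of the flat-top contribution $\{v\leq v_{\max}\}$, of the critical band, and of the case $x\leq 0$ is essentially the paper's argument and is correct. The gap is in your handling of the intermediate tail region for $x>0$.

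You bound the entire tail $\{v\geq v_{\max}\}$ by $H_L\leq C_0/(1+v^p)$ and then invoke $\int_{v_{\max}}^{\infty} v^{n-p}\,\der v\sim v_{\max}^{\,n+1-p}$. But $v_{\max}^{\alpha}\sim x\,t^{1/(m-1)}$ tends to $0$ as $t\to 0$, so $v_{\max}^{\,n+1-p}\to\infty$ and your bound is not uniform on $[0,T]$. Equivalently, for a given large $x$ choose $t\sim x^{-(m-1)}$; then $v_{\max}\sim 1$ and your tail estimate is only $O(1)$, whereas the lemma demands $O(x^{-(m-(n+1)/\alpha)})$. The failure is that the crude bound $G_L\leq C/(1+v^p)$ discards the factor $|X(x,v,t)|^m$ in the denominator precisely on the range $v_{\max}\leq v\leq \big(\tfrac{x}{(1+2\delta)t}\big)^{1/\alpha}$, where it is large: by \eqref{xpos2} one has $|X|\geq x-(1+\delta)v^{\alpha}t>0$ there.

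The paper repairs this by splitting the tail once more at $v^{\alpha}=\tfrac{x}{(1+2\delta)t}$. Above that threshold the bound $C/(1+v^p)$ does work, because now the lower limit scales like $(x/t)^{1/\alpha}$ and $\int_{(x/((1+2\delta)t))^{1/\alpha}}^{\infty} v^{n-p}\,\der v\lesssim (t/x)^{m-(n+1)/\alpha}$ carries a \emph{positive} power of $t$. Below the threshold one keeps the full denominator $1+|x-(1+\delta)v^{\alpha}t|^m+v^p$, substitutes $v=(x/t)^{1/\alpha}\xi$, and uses the elementary inequality
\[
|1-(1+\delta)\xi^{\alpha}|^{m}+t^{-m}\xi^{p}\;\geq\; \tfrac{1}{3^{m+1}}\bigl(1+t^{-m}\xi^{p}\bigr),
\]
valid for $t\leq 1$ and $\delta<\tfrac12$, to reduce to a convergent $\xi$-integral independent of $x$ and $t$. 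This is the step your proposal is missing; once you insert it, the three pieces combine exactly as you outlined.
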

\begin{proof}
We first recall that due to \eqref{Vestimate} we have $G_{L}(x,v,t) \leq \frac{C}{1+|X(x,v,t)|^m+v^p}$. 
We split the integral for $M_{n,L}$ as follows:
\[
 M_{n,L}(x,t)=\int_{0}^{v_{\max}(x,t)} v^n H_{L}(x,v,t)\,\der v + \int_{v_{\max}(x,t)}^{\infty} v^n H_{L}(x,v,t)\,\der v =: M_{n,1}(x,t) + M_{n,2}(x,t)\,,
\]
where $v_{\max}(x,t)=0$ if $x\leq 0$. From (\ref{Vestimate}) and since $\delta\in(0,\frac{1}{2})$, we have that $\frac{v}{2}\leq V(x,v,t)\leq \frac{3v}{2}$.
Due to \eqref{Xestimate}  and since $\delta\in(0,\frac{1}{2})$, it holds that $X(x,v,t)\geq x-(1+\delta)v^{\alpha}t\geq x-2v^{\alpha}t$. Then, by \eqref{vmaxestimate}, we further deduce that
\begin{align}\label{x estimate independent of delta}
 X(x,v_{\textup{max}}(x,t),t)\geq x-2v_{\textup{max}}(x,t)^{\alpha}t\geq x-2K_{max}xt^{\frac{m}{m-1}}\geq \frac{x}{2}\geq 0,
\end{align}
 for all $t\leq T$ if $T$ is sufficiently small. Thus, by Definition \ref{definition h for all l} and \eqref{form solution characteristics}, we obtain that 
\begin{align*}
 M_{n,1}(x,t) = &\int_0^{v_{\max}} v^n H_{L}(x,v,t)\der v = \int_0^{v_{\max}} v^n G_{L}(x,v_{\textup{max}}(x,t),t)\der v\\
 &\leq  
 \frac{ C v_{\max}^{n+1}}{1 + |x|^m + v_{\max}^p}  \leq \frac{C }{1+|x|^{m-\frac{n+1}{\alpha}}},
\end{align*}
for some constant $C>0$ which is independent of $\delta,L,$ and $R.$

To estimate $M_{n,2}$ we note that for all $x \in \R$ we have that
\begin{align*}
 M_{n,2}(x,t)\leq C  \int_0^{\infty} \frac{v^n}{1+v^{p}}\,\der v \leq C
\end{align*}
since $p-n>1$ by assumption. 

To obtain a decay for large $|x|$ we consider first  $x>1$ and use \eqref{xpos1}-\eqref{xpos2} to obtain  
\begin{align*}
 M_{n,2}(x,t)& \leq C \bigg( \int_0^{\big(\frac{x}{(1+2\delta)t}\big)^{\frac{1}{\alpha}}}\frac{v^n}{1+ (x-(1+\delta) v^{\alpha}t)^m+v^p}\der v  + \int_{\big(\frac{x}{(1+2\delta)t}\big)^{\frac{1}{\alpha}}}^{\infty} v^{n-p}\,\der v\bigg).
\end{align*}
Then, since $\delta<\frac{1}{2}$, we have that
\begin{align*}
 \int_{\big(\frac{x}{(1+2\delta)t}\big)^{\frac{1}{\alpha}}}^{\infty} v^{n-p}\,\der v
 \leq \frac{1}{p-n-1}\Big( \frac{(1+2\delta)t}{x}\Big)^{m-\frac{n+1}{\alpha}}\leq \frac{2^{m-\frac{n+1}{\alpha}}}{p-n-1}\Big( \frac{t}{x}\Big)^{m-\frac{n+1}{\alpha}}.
\end{align*}
For the other term, by using the change of variables $v=\big( \frac{x}{t}\big)^{\frac{1}{\alpha}} \xi$, we find
   \begin{align*}
  \int_0^{\big(\frac{x}{(1+2\delta)t}\big)^{\frac{1}{\alpha}}}\frac{v^n}{1+ (x- (1+\delta) v^{\alpha}t)^m+v^p}\der v
&\leq \big(\frac{x}{t}\big)^{\frac{1}{\alpha}}\int_{(0,\infty)}\frac{\big(\frac{x}{t}\big)^{\frac{n}{\alpha}}\xi^{n}}{1+x^{m}|1-(1+\delta)\xi^{\alpha}|^{m}+(\frac{x}{t})^{\frac{p}{\alpha}}\xi^{p}}\der  \xi\\
    &\leq \frac{1}{x^{m-\frac{1}{\alpha}}}\frac{1}{t^{\frac{1}{\alpha}}}\int_{(0,\infty)}\frac{\big(\frac{x}{t}\big)^{\frac{n}{\alpha}}\xi^{n}}{|1-(1+\delta)\xi^{\alpha}|^{m}+(\frac{1}{t})^{m}\xi^{p}}\der  \xi.
    \end{align*}
      Since $t\leq 1$ and  $m>1$ is even we have that the following holds
    \begin{align}\label{inequality for moment estimates independent of delta}
    3^{m+1}[|1-(1+\delta)\xi^{\alpha}|^{m}+\frac{1}{t^{m}}\xi^{p}]\geq 1+\frac{1}{t^{m}}\xi^{p},
    \end{align} 
    for $\xi\geq 0$. Indeed, if $|1-(1+\delta)\xi^{\alpha}|\geq \frac{1}{2}$, then \eqref{inequality for moment estimates independent of delta} follows. Otherwise, if  $|1-(1+\delta)\xi^{\alpha}|\leq \frac{1}{2}$, then $\frac{1}{2}\leq(1+\delta)\xi^{\alpha}\leq \frac{3}{2}$ and since $\delta<\frac{1}{2}$ it holds that $\xi^{\alpha}\geq \frac{1}{3}$. Thus, if we use in addition that $t\leq 1$, we have that
    \begin{align*}
        |1-(1+\delta)\xi^{\alpha}|^{m}+\frac{1}{t^{m}}\xi^{p}\geq \frac{1}{2t^{m}}\xi^{p}+\frac{1}{2t^{m}}\xi^{p}\geq \frac{1}{3^{m+1}}+\frac{1}{2t^{m}}\xi^{p}
    \end{align*}
    and thus (\ref{inequality for moment estimates independent of delta}) holds. It then follows that
        \begin{align*}
 \frac{1}{x^{m-\frac{1}{\alpha}}t^{\frac{1}{\alpha}}}\int_{(0,\infty)}\frac{\big(\frac{x}{t}\big)^{\frac{n}{\alpha}}\xi^{n}}{|1-\xi^{\alpha}|^{m}+(\frac{1}{t})^{m}\xi^{p}}\der  \xi&\leq \frac{C}{x^{m-\frac{1}{\alpha}}t^{\frac{1}{\alpha}}}\int_{(0,\infty)}\frac{\big(\frac{x}{t}\big)^{\frac{n}{\alpha}}\xi^{n}}{1+(\frac{1}{t})^{m}\xi^{p}}\der  \xi\\
   & \leq \frac{C}{x^{m-\frac{n+1}{\alpha}}}\int_{(0,\infty)}\frac{\eta^{n}}{1+\eta^{p}}\der  \eta\leq \frac{C}{x^{m-\frac{n+1}{\alpha}}},
    \end{align*}
for some constant $C>0$ which is independent of $\delta\in\big(0,\frac{1}{2}),L,$ and $R$.

In the case $x\leq 0$ we can  use \eqref{xnegative} to obtain the estimate similarly as above without the  need to split the integral.
Estimate 
    (\ref{estimate for moment for h}) follows from (\ref{estimate for all moment for h}) by choosing $n=1$ and the fact that $d$ in (\ref{we need function to be even}) satisfies $d>\frac{2}{\alpha}$.
\end{proof}
\begin{rmk}
 Let $T>0$ be sufficiently small. With similar computations as the ones used in Lemma \ref{estimate for mass for h}, we can prove that 
 \begin{align}\label{bound needed for the cauchy sequence}
    \int_{(0,\infty)}\frac{v^{n}}{1+|x|^{m}+v^{p}}\der v &\leq \frac{K_{3}}{1+|x|^{m-\frac{n+1}{\alpha}}},
 \end{align}
 for all $x\in\mathbb{R},$ $t\in[0,T]$, and for $n\in[0,\max\{2\gamma,1\}]$.
\end{rmk}

  We now define the function for which we will prove is a supersolution for the problem (\ref{inductive sequence}).
\begin{defi}\label{definition supersolution}
Let $\delta\in\big(0, \frac{1}{4}\big]$ and   $L=4K_{1}K_{2}K_{3}C_{0}$, where $C_{0}$ is as in \eqref{form solution characteristics}, $K_{1}$ is as in \eqref{condition on gamma existence}, $K_{2}$ is as in Lemma \ref{L.Hproperties}, and $K_{3}$ is as in Lemma \ref{estimate for mass for h}. Denote by $H(x,v,t):=H_{L}(x,v,t)$, where $H_{L}$ is as in Definition \ref{definition h for all l}. Moreover, let $B_t$ to be
\begin{align}\label{Btdef}
     B_t:=e^{\lambda t + \lambda t^{\frac{\alpha+1-\gamma}{\alpha}}} 
    \end{align}
for some $\lambda>0$. 
    Then we define the function $G$ via
    \begin{align}\label{definition of supersol supersol}
    G(x,v,t)=B_{t}H(x,v,t).
\end{align}
\end{defi}

As mentioned before, with this construction $G$ is decreasing in $v$ for fixed $x$ outside possibly a critical region where 
$x \in [(1-2\delta) v^{\alpha}t, (1+2\delta) v^{\alpha}t]$. In the following we will have to deal with this region separately. 

Our key result is the following.

\begin{prop}\label{proposition about supersolution}
Let $T>0$ be sufficiently small and $\delta\in\big(0,\frac{1}{4}\big]$. There exists a sufficiently large $\lambda>0$, which depends only on $C_{0}$ and the parameters $m,\gamma,\alpha$ such that if 
  $f_n \leq G$, 
 where the sequence $\{f_{n}\}_{n\in\mathbb{N}}$ was defined in (\ref{inductive sequence}), then  $f_{n+1}\leq G$, for all $n \in \N$ and all $t\in[0,T]$.
\end{prop}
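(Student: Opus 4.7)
Since $f_{n+1}$ solves the linear problem (\ref{inductive sequence}) with $f_n$ frozen as a coefficient and as a nonnegative source, the mild/Duhamel formulation along the characteristics $s\mapsto x-v^{\alpha}(t-s)$ and a standard comparison argument show that $f_{n+1}\leq G$ on $[0,T]$ provided two conditions are met: $G(x,v,0)\geq f_{\textup{in}}(x,v)$, which is immediate from $B_0=1$ and $G_L(x,v,0)=C_0/(1+|x|^m+v^p)\geq f_{\textup{in}}(x,v)$; and the pointwise supersolution inequality
\begin{equation}\label{E:super}
\partial_t G + v^{\alpha}\partial_x G + G\, a[f_n](x,v,t) \geq \tfrac{1}{2}\int_{(0,v)} K(v-v',v')\, G(x,v-v',t)\, f_n(x,v',t)\,\der v'.
\end{equation}
The rest of the argument is devoted to establishing (\ref{E:super}) by case analysis on the three branches of Definition \ref{definition h for all l}.

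\textbf{Computing the two sides.} Writing $G=B_t H$, the derivative of $B_t$ yields the baseline contribution $B_t'H\geq \lambda G$. The remainder $B_t(\partial_t H+v^{\alpha}\partial_x H)$ is favorable in the two main regimes: where $H=G_L$ with $\partial_v G_L\leq 0$, equation (\ref{eq char}) produces the crucial transport term $B_t\frac{L v^{\gamma}\xi_R(v)}{1+|x|^{m-d}}|\partial_v G_L|\geq 0$; where $H(x,v,t)=G_L(x,v_{\textup{max}}(x,t),t)$, the chain rule together with $\partial_v G_L|_{v_{\textup{max}}}=0$ gives $\partial_t H+v^{\alpha}\partial_x H=(v^{\alpha}-v_{\textup{max}}^{\alpha})\,\partial_x G_L(x,v_{\textup{max}},t)\geq 0$ thanks to $v\leq v_{\textup{max}}$ in this regime and Proposition \ref{derivatives in x variable}; only in the critical slab $x\in[(1-2\delta)v^{\alpha}t,(1+2\delta)v^{\alpha}t]$ may this contribution have the wrong sign. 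For the right-hand side of (\ref{E:super}) I split the integration at $v'=v/2$. On $(0,v/2)$, combining (\ref{growth condition on the kernel}), the mean-value identity $G(x,v,t)-G(x,v-v',t)=v'\,\partial_v G(x,\tilde v,t)$ with $\tilde v\in(v/2,v)$, and the derivative comparison (\ref{H3})/(\ref{H4}) ($|\partial_v G(x,\tilde v,t)|\leq K_2\,|\partial_v G(x,v,t)|$) bounds the integrand $\tfrac{1}{2}K(v-v',v')G(x,v-v',t)-K(v,v')G(x,v,t)$ by $\tfrac{K_2}{2}K(v,v')\,v'\,|\partial_v G(x,v,t)|$ after discarding a manifestly nonpositive piece. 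Using $K(v,v')\leq 2K_1 v^{\gamma}$ together with Lemma \ref{estimate for mass for h} ($M_1^{f_n}\leq B_t K_3 C_0/(1+|x|^{m-d})$) and recalling $L=4K_1K_2K_3C_0$, one obtains
\begin{equation*}
\int_0^{v/2}\Big[\tfrac{1}{2}K(v-v',v')\,G(x,v-v',t)-K(v,v')\,G(x,v,t)\Big]f_n(x,v',t)\,\der v' \leq \frac{L B_t v^{\gamma}\,|\partial_v G(x,v,t)|}{4(1+|x|^{m-d})},
\end{equation*}
which matches the formal asymptotic of Subsection \ref{appendix a}. The complementary integral on $(v/2,v)$ (after the substitution $u=v-v'$) and the loss tail $G\int_v^{\infty}K f_n$ are estimated analogously using (\ref{new H estimate}), (\ref{H estimate problematic region}), the higher moment $M_{\gamma,L}$ (legitimate because $m>(2\gamma+1)/\alpha$), and $d\geq(\gamma+1)/\alpha$ to absorb every $|x|$-weight into the uniform denominator $1+|x|^{m-d}$; they contribute either transport-type bounds of the same form or lower-order terms of the form $CG$.

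\textbf{Combining and the main obstacle.} For $v\geq 2R$ in the first two regimes, the transport contribution $B_t\frac{L v^{\gamma}}{1+|x|^{m-d}}|\partial_v G|$ dominates the principal RHS bound with a factor-of-four safety margin, valid as long as $B_t\leq 4$, which forces $T$ sufficiently small. For $v\leq 2R$, the factor $v^{\gamma}\leq(2R)^{\gamma}$ is bounded and every remaining error term reduces to $CG$, absorbed by $\lambda G$ once $\lambda$ is chosen large enough (depending only on $C_0$, the kernel constants $K_1,K_2,K_3$, and the parameters $m,\alpha,\gamma$). The principal technical obstacle is the critical $x$-slab: the transport term has the wrong sign, the clean derivative comparison (\ref{H3})/(\ref{H4}) is unavailable, and one must fall back on the weaker estimate (\ref{H estimate problematic region}), the thinness of the slab (controlled by $\delta\leq 1/4$ and $T$ small), and the $\lambda G$ margin to close the estimate. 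Once all cases are handled, (\ref{E:super}) holds pointwise and $f_{n+1}\leq G$ on $[0,T]$ follows.
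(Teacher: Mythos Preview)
Your overall strategy coincides with the paper's: establish the pointwise supersolution inequality by case analysis on the regimes of Definition \ref{definition h for all l}, using the transport equation (\ref{eq char}) for $G_L$, Proposition \ref{derivatives in x variable} on the $v_{\max}$-plateau, and the moment bounds of Lemma \ref{estimate for mass for h} to match the coagulation gain against $\frac{Lv^\gamma}{1+|x|^{m-d}}|\partial_v G|$. Outside the critical slab your sketch is sound and follows the paper closely.

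The gap is in the critical slab $x\in[(1-2\delta)v^\alpha t,(1+2\delta)v^\alpha t]$ when $\gamma>1$. You record only $B_t'H\geq\lambda G$ and propose to close with ``the $\lambda G$ margin''. This does not work: the crude bound from (\ref{H estimate problematic region}) alone yields a contribution of order $v^\gamma G\int_0^{v/2}f_n\,\der v'$, and since in the slab $|x|\sim v^\alpha t$ can be arbitrarily small, the moment factor does not kill $v^\gamma$. The paper instead splits the gain integral at $v'=\eta v$ for a fixed small $\eta$: on $[\eta v,v/2]$ one exploits $f_n(v')\leq C(1+v'^p)^{-1}$ to cancel $v^\gamma$; on $[0,\eta v]$ one uses that $X(x,v',t)\geq x/4\gtrsim v^\alpha t$ (since $v'^\alpha\leq\eta^\alpha v^\alpha$) to force decay in the denominator of $G(x,v',t)$, together with the derivative bound (\ref{dvVestimate problematic region}). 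The same mechanism is needed to bound the wrong-sign transport contribution $\frac{Lv^\gamma}{1+|x|^{m-d}}|\partial_v H|$ itself. The output is an error of the form $CL\,t^{-(\gamma-1)/\alpha}G$, which diverges as $t\to0^+$ and cannot be absorbed by any fixed multiple $\lambda G$. This is precisely why $B_t=e^{\lambda t+\lambda t^{(\alpha+1-\gamma)/\alpha}}$ carries the second exponent: its derivative contributes $\lambda\tfrac{\alpha+1-\gamma}{\alpha}\,t^{-(\gamma-1)/\alpha}G$, a singular term that matches and dominates the slab error. Your sketch never invokes this term, and ``thinness of the slab'' is not the mechanism that closes the estimate.
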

\begin{rmk}\label{choice of time}
Since the constants will play an important role in our proof,   it is worthwhile to mention for clarity which are the constants that the parameters in Proposition \ref{proposition about supersolution} depend on. Let $C_{0}$ be as in \eqref{form solution characteristics}, $K_{1}$ as in \eqref{condition on gamma existence}, $K_{2}$ as in Lemma \ref{L.Hproperties}, and $K_{3}$ be as in Lemma \ref{estimate for mass for h}. Notice that these constants do not depend on $\delta$, $L$ or $R$ from Proposition \ref{properties of the characteristics}. In order to prove that $G$ is a supersolution, we take   $L=4K_{1}K_{2}K_{3}C_{0}$. 
     We then take $\lambda$ to be sufficiently large depending on  $C_{0}, K_{1}, K_{2}$, and $K_{3}$. We then take $T$ to be such that $\max\{T,T^{\frac{\alpha+1-\gamma}{\alpha}}\}\leq \frac{\ln(2)}{2\lambda},$ which implies that $B_{t}\leq 2$, for all $t\in[0,T]$, where $B_{t}$ was defined in \eqref{Btdef}.

\end{rmk}

Before we begin with the proof of Proposition \ref{proposition about supersolution}, it is worthwhile to notice that we have some moment bounds for the function $G$ in Definition \ref{definition supersolution} as a direct consequence of Lemma \ref{estimate for mass for h}. 
\begin{lem}\label{estimate for mass}
Let $T>0$ be sufficiently small with $K_{3}$ as in Lemma \ref{estimate for mass for h}. Then for all $t \in [0,T]$ we have 
\begin{align}\label{estimate for moment}
M_{1}(x,t):=\int_{(0,\infty)}v
G (x,v,t)\der v &\leq \frac{K_{3}C_{0}B_t}{1+|x|^{m-d}} \qquad \textup{ for }x\in\mathbb{R},
\end{align}
where $B_t$ was defined in \eqref{Btdef}.
In general, if $p>\max\{2\gamma+1,2\}$, we have that
\begin{align}\label{estimate for all moment}
M_{n}(x,t):=\int_{(0,\infty)}v^{n}G(x,v,t)\der v &\leq \frac{K_{3}C_{0}B_t}{1+|x|^{m-\frac{n+1}{\alpha}}} \qquad  \textup{ for }x\in\mathbb{R}\,, t \geq 0\,,
\end{align}
for $n\in[0,\max\{2\gamma,1\}]$. 
\end{lem}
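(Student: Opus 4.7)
\textbf{Proof plan for Lemma \ref{estimate for mass}.} The statement is essentially an immediate corollary of Lemma \ref{estimate for mass for h} combined with the definition of $G$, so the plan is very short. By Definition \ref{definition supersolution} we have
\[
G(x,v,t) = B_t\, H(x,v,t) = B_t\, H_L(x,v,t),
\]
where $L = 4K_1 K_2 K_3 C_0$ is a specific admissible choice (in particular, the corresponding $R>0$ from Proposition \ref{properties of the characteristics} has been fixed). Hence, for any $n\geq 0$,
\[
M_n(x,t) \;=\; \int_{(0,\infty)} v^n G(x,v,t)\,\der v \;=\; B_t \int_{(0,\infty)} v^n H_L(x,v,t)\,\der v \;=\; B_t\, M_{n,L}(x,t).
\]

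The plan is then to invoke Lemma \ref{estimate for mass for h} directly. That lemma asserts that the constant $K_3$ in the bound for $M_{n,L}$ is independent of the parameters $\delta$, $L$, and $R$, so in particular it is valid for our specific choice $L=4K_1K_2K_3 C_0$. Applying \eqref{estimate for all moment for h} with the prescribed exponent $n\in[0,\max\{2\gamma,1\}]$ (under the standing assumption $p>\max\{2\gamma+1,2\}$, which follows from \eqref{parameters}) gives
\[
M_{n,L}(x,t) \leq \frac{K_3 C_0}{1+|x|^{m-\frac{n+1}{\alpha}}}, \qquad x\in\R,\; t\in[0,T],
\]
for $T>0$ sufficiently small and independent of $\delta, L, R$. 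Multiplying through by $B_t$ yields \eqref{estimate for all moment}, and setting $n=1$ together with the fact that $d>\frac{2}{\alpha}$ (so that $m-d \leq m-\frac{2}{\alpha}$) from \eqref{we need function to be even} produces \eqref{estimate for moment}.

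Since the work is entirely a bookkeeping step, there is no real obstacle: the only thing to check is that the choice of $L$ in Definition \ref{definition supersolution} is compatible with the hypotheses of Lemma \ref{estimate for mass for h} (it is, because that lemma holds for arbitrary $L>0$ and the corresponding large $R$), and that the smallness requirement on $T$ in Lemma \ref{estimate for mass for h} is compatible with the smallness requirement in Remark \ref{choice of time} (both are independent of $\delta,L,R$, so one simply takes $T$ to be the minimum of the two).
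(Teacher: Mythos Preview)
Your proposal is correct and matches the paper's approach: the paper states immediately before Lemma \ref{estimate for mass} that the bounds are ``a direct consequence of Lemma \ref{estimate for mass for h}'' and gives no further proof. Your bookkeeping via $G=B_t H_L$ and $M_n=B_t M_{n,L}$ is exactly the intended argument.
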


We now focus on proving Proposition \ref{proposition about supersolution}.

\bigskip
\begin{proof}[Proof of Proposition \ref{proposition about supersolution}]

We now prove that $G$ is a supersolution of the problem (\ref{inductive sequence}), that is we show
\begin{align}\label{main ingredient supersolution}
& \partial_{t}G(x,v,t)+v^{\alpha}\partial_{x}G(x,v,t)-\int_{0}^{\frac{v}{2}}K(v-v',v')G(x,v-v',t)f_{n}(x,v',t)\der v'\nonumber\\
    &  +\int_{0}^{\infty}K(v,v')G(x,v,t)f_{n}(x,v',t)\der v'\geq 0.
    \end{align}
We prove (\ref{main ingredient supersolution}) by showing first that
\begin{align}\label{second ingredient supersolution}
\partial_{t}G(x,v,t)+v^{\alpha}\partial_{x}G(x,v,t)+\frac{{L}v^{\gamma}}{1+|x|^{m-d}}\xi_{R}(v)\partial_{v}G(x,v,t)\geq 0, 
    \end{align}
for any ${L}>0$ and where $d$ was defined in (\ref{we need function to be even}), and then that
    \begin{align}\label{third ingredient supersolution}
& \partial_{t}G(x,v,t)+v^{\alpha}\partial_{x}G(x,v,t)-\int_{0}^{\frac{v}{2}}K(v-v',v')G(x,v-v',t)f_{n}(x,v',t)\der v'\nonumber\\
    &  +\int_{0}^{\infty}K(v,v')G(x,v,t)f_{n}(x,v',t)\der v'\nonumber\\
   & \geq \partial_{t}G(x,v,t)+v^{\alpha}\partial_{x}G(x,v,t)+\frac{{L}v^{\gamma}}{1+|x|^{m-d}}\xi_{R}(v)\partial_{v}G(x,v,t),
    \end{align}
for $L>0$ as in Definition \ref{definition supersolution}.

\bigskip

We now prove (\ref{second ingredient supersolution}).

Assume $x\not\in[(1-2\delta) v^{\alpha}t,(1+2\delta) v^{\alpha}t]$ and $\partial_{v}G_{L}\leq 0$ or $x\in[(1-2\delta) v^{\alpha}t,(1+2\delta) v^{\alpha}t]$. 
Then $G=B_tG_{L}$ and thus, using (\ref{eq char}), it holds with $c_{\alpha}:=\frac{\alpha+1-\gamma}{\alpha}$ that
\begin{align*}
   & \partial_{t}G(x,v,t)+v^{\alpha}\partial_{x}G(x,v,t)+\frac{{L}v^{\gamma}}{1+|x|^{m-d}}\xi_{R}(v)\partial_{v}G(x,v,t)\\
    &=   B_t\bigg(\partial_{t}G_{L}(x,v,t)+v^{\alpha}\partial_{x}G_{L}(x,v,t)+\frac{{L}v^{\gamma}}{1+|x|^{m-d}}\xi_{R}(v)\partial_{v}G_{L}(x,v,t)\bigg)\\
    &\qquad +\lambda(1+c_{\alpha} t^{\frac{\alpha+1-\gamma}{\alpha}-1})B_t G_{L}\geq \lambda\textup{e}^{\lambda t}G_{L}(x,v,t)\geq 0.
\end{align*}
Notice that we did not use the contribution of the term $t^{\frac{\alpha+1-\gamma}{\alpha}}$ in the computations. This term will be needed  later in the proof. 

Assume now $x\not\in[(1-2\delta) v^{\alpha}t, (1+2\delta) v^{\alpha}t]$ and $\partial_{v}G_{L} >0$ such that we have $G(x,v,t)=B_{t}G_{L}(x,v_{\max}(x,t),t)$. Then
\begin{align}\label{general function supersolution}
   & \partial_{t}G(x,v,t)+v^{\alpha}\partial_{x}G(x,v,t)+\frac{{L}v^{\gamma}}{1+|x|^{m-d}}\xi_{R}(v)\partial_{v}G(x,v,t)\nonumber\\
    &= B_t     \bigg(\partial_{t}G_{L}(x,v_{\textup{max}}(x,t),t)+v^{\alpha}\partial_{x}G_{L}(x,v_{\textup{max}}(x,t),t)\bigg)\\
    &\qquad +\lambda(1+c_{\alpha} t^{\frac{\alpha+1-\gamma}{\alpha}-1})B_t G_{L}(x,v_{\textup{max}}(x,t),t)\nonumber\\
    &\geq B_t \bigg(\partial_{t}G_{L}(x,v_{\textup{max}}(x,t),t)+v^{\alpha}\partial_{x}G_{L}(x,v_{\textup{max}}(x,t),t)\bigg).
\end{align}
By the choice of $v_{\textup{max}}(x,t)$ we have that $v\leq v_{\textup{max}}(x,t)$ in the region where $\partial_{v}G_{L}(x,v,t)>0$. Moreover, from Proposition \ref{derivatives in x variable}, we know that $\partial_{x}G_{L}(x,v_{\textup{max}}(x,t),t)\leq 0$. Thus, from (\ref{general function supersolution}) and (\ref{eq char}), we further obtain that
\begin{align*}
   & \partial_{t}G(x,v,t)+v^{\alpha}\partial_{x}G(x,v,t)+\frac{{L}v^{\gamma}}{1+|x|^{m-d}}\xi_{R}(v)\partial_{v}G(x,v,t)\\
    &\geq B_t  \bigg(\partial_{t}G_{L}(x,v_{\textup{max}}(x,t),t)+v_{\textup{max}}^{\alpha}\partial_{x}G_{L}(x,v_{\textup{max}}(x,t),t)\bigg)= 0
\end{align*}
and \eqref{second ingredient supersolution} follows.

We are thus left to prove that (\ref{third ingredient supersolution}) holds.
We analyze the cases when $x\in[(1-2\delta) v^{\alpha}t, (1+2\delta) v^{\alpha}t]$ and $x\not\in[(1-2\delta) v^{\alpha}t,(1+2\delta) v^{\alpha}t]$ separately.

\textbf{Proof of  (\ref{third ingredient supersolution}) for} $x\not\in[(1-2\delta) v^{\alpha}t, (1+2\delta) v^{\alpha}t]$.

We have that
\begin{align}\label{positivity of the additional term part two}
-\int_{0}^{\frac{v}{2}}&K(v-v',v')G(x,v-v',t)f_{n}(x,v',t)\der v' +\int_{0}^{\infty}K(v,v')G(x,v,t)f_{n}(x,v',t)\der v'\nonumber\\
    &\geq  -\int_{0}^{\frac{v}{2}}K(v,v')[G(x,v-v',t)-G(x,v,t)]f_{n}(x,v',t)\der v'\nonumber\\
    &\qquad \qquad +\int_{0}^{\frac{v}{2}}[K(v,v')-K(v-v',v')]G(x,v-v',t)f_{n}(x,v',t)\der v'.
    \end{align}
    Since $G,f_{n}\geq 0$ and $K(v-v',v')\leq K(v,v')$ when $v'\in[0,\frac{v}{2})$ by (\ref{growth condition on the kernel}), it holds that 
    \begin{align}\label{positivity of the additional term}
        \int_{0}^{\frac{v}{2}}[K(v,v')-K(v-v',v')]G(x,v-v',t)f_{n}(x,v',t)\der v'\geq 0.
    \end{align}
We know that $K(v,v')\leq K_{1}(v^{\gamma}+v'^{\gamma})$, where $K_{1}$ is as in (\ref{condition on gamma existence}). Thus, when $v'\leq\frac{v}{2}$, it holds that $K(v,v')\leq 2K_{1}v^{\gamma}$. Without loss of generality  we assume in the following that $K(v,v')\leq v^{\gamma}$ when $v'\leq\frac{v}{2}$. This is in order to simplify the notation but we allow $L$ in Definition \ref{definition supersolution} to depend on $K_{1}$. Additionally, by (\ref{definition of supersol supersol}), it holds that $\partial_{v}G\leq 0$ and thus $G(x,v-v',t)-G(x,v,t)\geq 0$, for $v\in(0,\frac{v}{2})$. Since $f_n \leq G$  and  with (\ref{positivity of the additional term}) we deduce that
    \begin{align*}
 -&\int_{0}^{\frac{v}{2}}v^{\gamma}[G(x,v-v',t)-G(x,v,t)]f_{n}(x,v',t)\der v'\\
&    \geq  -v^{\gamma}\int_{0}^{\frac{v}{2}}\big(G(x,v-v',t)-G(x,v,t)\big)G(x,v',t)\der v'.
    \end{align*}
  We use the following notation
    \begin{align*}
    I_{1}:=v^{\gamma}\int_{0}^{\frac{v}{2}}\big(G(x,v-v',t)-G(x,v,t)\big)G(x,v',t)\der v' =\xi_{R}(v)I_{1}+(1-\xi_{R}(v))I_{1}
    \end{align*}
with $\xi_{R}$  as in \eqref{chirdef}. 
Assume that $H$, as in Definition \ref{definition supersolution},  satisfies
    \begin{align}\label{supersolution}
         \xi_{R}(v)v^{\gamma}\int_{0}^{\frac{v}{2}}\big(H(x,v-v',t)- H(x,v,t)\big)G(x,v',t)\der v'\leq & -\frac{{L} v^{\gamma}}{1+|x|^{m-d}} \xi_{R}(v)\partial_{v}   H(x,v,t)\nonumber\\
         &+CB_{t}    H(x,v,t)
    \end{align}
    and 
    \begin{align}
 (1-\xi_{R}(v))v^{\gamma}\int_{0}^{\frac{v}{2}}\big(H(x,v-v',t)-H(x,v,t)\big)G(x,v',t)\der v'\leq CB_{t} H(x,v,t),\label{alter lambda part two}
\end{align}
for  $L=4K_{1}K_{2}K_{3}C_{0}$ as in Definition \ref{definition supersolution}. 
Then
\begin{align*}
 \partial_t& G + v^{\alpha} \partial_{x}G - v^{\gamma}\int_{0}^{\frac{v}{2}}\big(G(x,v-v',t)-G(x,v,t)\big)
 G(x,v',t)\der v'\\
 &\geq B_t  \Big ( \partial_t  H + v^{\alpha} \partial_{x} H -
 v^{\gamma}\int_{0}^{\frac{v}{2}}\big( H (x,v-v',t)-
H (x,v,t)\big) G(x,v',t)\der v'\Big) + \lambda  e^{\lambda t+\lambda t^{\frac{\alpha+1-\gamma}{\alpha}}}  H \\
 &\geq  B_t   \Big ( \partial_t  H + v^{\alpha} \partial_{x} H +\frac{{L} v^{\gamma}}{1+|x|^{m-d}} \xi_{R}(v)\partial_{v}   H(x,v,t)\Big) \\
 &+ B_t    \big( -2CB_{t}+ \lambda \big)  H  \geq \partial_t G+ v^{\alpha} \partial_x G + \frac{{L} v^{\gamma}}{1+|x|^{m-d}} \xi_R(v) \partial_v G
\end{align*}
if $\lambda \geq 2CB_{t}$, which holds true if $\lambda=4C$ and if $\max\{t,t^{\frac{\alpha+1-\gamma}{\alpha}}\}\leq \frac{\ln(2)}{2\lambda}$. This proves \eqref{third ingredient supersolution}.

\bigskip
{\bf Proof of (\ref{alter lambda part two}):}.   We have the following cases

\bigskip
{ \bf Case $1.$ $ a)$  $x> 0$  and $v\leq v_{\textup{max}}(x,t)$:} In this case $ H(x,v-v',t)= H(x,v,t)$, for $v'\in(0,\frac{v}{2}]$. Thus, (\ref{alter lambda part two}) holds.

\bigskip
{\bf Case $1.$ $ b)$  $x> 0$ and $v_{\textup{max}}(x,t)\leq v\leq 2v_{\textup{max}}(x,t)$:} The proof of this case is the same as for Case $1.$ $b)$ when proving that (\ref{supersolution}) holds. We thus postpone its proof.

\bigskip

{\bf Case $2.$ Either $\{x> 0$ and $ 2v_{\textup{max}}(x,t)\leq v\}$ or $\{x\leq 0\}$:} 
Since $v\leq 2R$ and $\gamma\geq 0$, it follows, using \eqref{H1} and Lemma \ref{estimate for mass}, that
\begin{align}
    (1-\xi_{R}(v))I_{1}\textup{e}^{-\lambda t}&\leq (2R)^{\gamma}\int_{0}^{\frac{v}{2}}\big(H(x,v-v',t)- H(x,v,t)\big)G(x,v',t) \der v'\nonumber\\
    &\leq (2R)^{\gamma}\int_{0}^{\frac{v}{2}} H(x,v-v',t)G(x,v',t) \der v’ \nonumber \\
    &\leq K_{2}(2R)^{\gamma} H(x,v,t)\int_{0}^{\frac{v}{2}}G(x,v',t)\der v'\nonumber\\
 &\leq\frac{ C_{0}K_{2}K_{3}(2R)^{\gamma} B_{t} H(x,v,t)}{1+|x|^{m-\frac{1}{\alpha}}}\leq C B_{t}H(x,v,t).\label{alter lambda}
\end{align}
%We have the following cases
%Case $1.$ $x> 0$ with the subcases:

%Case $1.$ $ a)$ $v\leq v_{\textup{max}}(x,t)$: In this case $ H(x,v-v',t)= H(x,v,t)$, for $v'\in[0,\frac{v}{2}]$. Thus, (\ref{alter lambda part two}) holds.

%Case $1.$ $ b)$ $v_{\textup{max}}(x,t)\leq v\leq 2v_{\textup{max}}(x,t)$: The proof of this case is the same as for the proof of the same case when proving that (\ref{supersolution}) holds. We thus postpone its proof.

%Case $2.$  $ 2v_{\textup{max}}(x,t)\leq v$ or $x\leq 0$.
\begin{rmk}\label{remark dependence constants 1}
    Notice that the constant $C$ in \eqref{alter lambda} depends on $R$ from Proposition \ref{properties of the characteristics}, on the constant $K_{2}$ from \eqref{H1}, and on the constant $K_{3}$ from  Lemma \ref{estimate for mass}. However, since $L$ is fixed in Definition \ref{definition supersolution}, $R$ depends only on $K_{1}, K_{2}, K_{3}$ and $C_{0}$.
\end{rmk}

\bigskip
{\bf Proof  of (\ref{supersolution}):}  We have the following cases

%Case $1.$ $x> 0$ with the subcases:
%Case $1.$ $ a)$ $v\leq v_{\textup{max}}(x,t)$.
%Case $1.$ $ b)$ $v_{\textup{max}}(x,t)\leq v\leq 2v_{\textup{max}}(x,t)$. The proof of this case also holds for Case $1.$ $b)$ when proving (\ref{alter lambda part two}).

%Case $1.$ $ c)$  $ 2v_{\textup{max}}(x,t)\leq v$.
%Case $2.$ $x\leq 0$.

\bigskip

{\bf Case $1.$ $a)$: $x>0$ and $v\leq v_{\textup{max}}(x,t)$:}  Notice that in this case $ H(x,v-v',t)= H(x,v,t)$, for $v'\in(0,\frac{v}{2}]$, and $\partial_{v} H(x,v,t)=0$. Thus, (\ref{supersolution}) holds.

\bigskip

{\bf Case $1.$ $b)$: $x>0$ and   $v_{\textup{max}}(x,t)\leq v\leq 2v_{\textup{max}}(x,t)$:} 
%(The proof of this case also holds for Case $1.$ $b)$ when proving (\ref{alter lambda part two}).)
We divide the integral on the left hand side of \eqref{supersolution}  into the region $v' \in (0,v-v_{\text{max}})$ and $v' \in (v-v_{\text{max}},\frac{v}{2})$, respectively.  For $v' \in (v-v_{\text{max}},\frac{v}{2})$, using
 $\frac{v}{2}\leq v_{\textup{max}}(x,t)$ and Lemma \ref{estimate for mass} with $n=0$, we obtain that
\begin{align}
\int_{v-v_{\textup{max}}(x,t)}^{\frac{v}{2}}\big(  H(x,v-v',t)- H(x,v,t)\big) G(x,v',t)\der v'
&\leq  H(x,v_{
\textup{max}}(x,t),t)\int_{0}^{v_{\textup{max}}}G(x,v',t)\der v'\nonumber\\
& \leq \frac{ C_{0}K_{3} B_{t}H(x,v_{
\textup{max}}(x,t),t)}{1+|x|^{m-\frac{1}{\alpha}}}.\label{firstineq}
\end{align}
We have $0\leq v^{\alpha}\leq 2 v_{\textup{max}}^{\alpha}(x,t)\leq 2K_{max} xt^{\frac{1}{m-1}}$, with $K_{max}$ as in \eqref{vmaxestimate}.

Moreover, using \eqref{H2} it follows that
    \begin{align*}
        v^{\gamma}\int_{v-v_{\textup{max}}(x,t)}^{\frac{v}{2}}\big( H(x,v-v',t)- H(x,v,t)
        \big) G(x,v',t)\der v'\leq   \frac{ 2^{\gamma}C_{0}K_{2}K_{3}B_{t}H(x,v,t)v_{\textup{max}}(x,t)^{\gamma}}{1+x^{m-\frac{1}{\alpha}}}.
    \end{align*}
Since $v_{\textup{max}}(x,t)^{\alpha}\leq 2K_{max} xt^{\frac{1}{m-1}}$, with $K_{max}$ as in \eqref{vmaxestimate}, we further obtain that
    \begin{align*}
     \frac{H(x,v,t)v_{\textup{max}}(x,t)^{\gamma}}{1+x^{m-\frac{1}{\alpha}}}\leq  \frac{CH(x,v,t)x^{\frac{\gamma}{\alpha}}t^{\frac{\gamma}{\alpha(m-1)}}}{1+x^{m-\frac{1}{\alpha}}}\leq Ct^{\frac{\gamma}{\alpha(m-1)}} H(x,v,t),
     \end{align*}
     since $m>\frac{\gamma+1}{\alpha}$. Thus
         \begin{align}\label{dependence constants part one}
        v^{\gamma}\int_{v-v_{\textup{max}}(x,t)}^{\frac{v}{2}}\big(H(x,v-v',t)- H(x,v,t)\big)
        G(x,v',t)\der v'\leq CB_{t}t^{\frac{\gamma}{\alpha(m-1)}} H(x,v,t).
    \end{align}
We now estimate the integral
\begin{align}
J:=\int_{0}^{v-v_{\textup{max}}(x,t)}\big( H(x,v-v',t)- H(x,v,t)\big)G(x,v',t)\der v'.\label{second case integral}
\end{align}
As before, using \eqref{H1} and  Lemma \ref{estimate for mass}, we find
\begin{align*}
J\leq \int_{0}^{v{-}v_{\textup{max}}(x,t)}H(x,v{-}v',t) G(x,v',t)
\der v'\leq & \int_{0}^{v-v_{\textup{max}}(x,t)}K_{2}  H(x,v,t)G(x,v',t)\der v'\\
&\leq  
\frac{ C_{0}K_{2}K_{3}B_{t}H(x,v,t)}{1+x^{m-\frac{1}{\alpha}}}\nonumber
\end{align*}
and this implies, since $v\leq 2 v_{\textup{max}}(x,t)$, that
\begin{align}
v^{\gamma}\int_{0}^{v{-}v_{\textup{max}}(x,t)}\big(H(x,v{-}v',t)- H(x,v,t)\big)G(x,v',t)\der v&\leq C B_{t}H(x,v,t)\frac{v_{\textup{max}}(x,t)^{\gamma}}{1+x^{m-\frac{1}{\alpha}}}\nonumber\\
&\leq CB_{t} H(x,v,t)\frac{x^{\frac{\gamma}{\alpha}}t^{\frac{\gamma}{\alpha(m-1)}}}{1+x^{m-\frac{1}{\alpha}}}.\label{dependence constants part two}
\end{align}
Since $m>\frac{\gamma+1}{\alpha}$ it follows that (\ref{supersolution})  holds in this case.

\begin{rmk}\label{remark dependence constants 2}
       Notice that the constant $C$ in \eqref{dependence constants part one} and \eqref{dependence constants part two} depends only on the constant $K_{2}$ from \eqref{H1} and on the constants $C_{0},K_{3}$ from  Lemma \ref{estimate for mass}. 
\end{rmk}

\bigskip
{\bf Case $1.$ $c)$: $x>0$ and $2v_{\max}(x,t) \leq v$ or $x<0$:}  Notice that the computations used in Case $1.$ $ b)$ hold for any $v\leq C_{l} v_{\textup{max}}(x,t)$, for some fixed constant $C_{l}>0$. We can thus assume without loss of generality in this case that $v> C_{l} v_{\textup{max}}(x,t)$  and that $v\geq R$ because of the presence of $\xi_{R}(v)$ in (\ref{supersolution}).

We have that $v-v'\in (\frac{v}{2},v)$, for $v'\in(0,\frac{v}{2})$. It holds
   \begin{align*}
        \int_{0}^{\frac{v}{2}}\big( H(x,v-v',t)- H(x,v,t)\big) G(x,v',t)\der v'
        =    - \int_{0}^{\frac{v}{2}}G(x,v',t)\int_{v-v'}^{v}\partial_{ v} H(x,\tilde v,t)\der \tilde v\der v'.
    \end{align*}
We can  then use (\ref{H4}), \eqref{H3} and Lemma \ref{estimate for mass} with $n=1$ in order to deduce that
 \begin{align}
    - v^{\gamma}\xi_{R}(v)&\int_{0}^{\frac{v}{2}}G(x,v',t)\int_{v-v'}^{v}\partial_{v} H(x,\tilde v,t)\der \tilde v\der v'\nonumber\\
    &\leq  -K_{2}v^{\gamma}\xi_{R}(v)\partial_{v} H(x,v,t)\int_{0}^{\frac{v}{2}}v'G(x,v',t)\leq -\frac{C_{0}K_{2}K_{3}B_{t}v^{\gamma}\xi_{R}(v)\partial_{v} H(x,v,t)}{1+|x|^{m-d}}\nonumber\\
    &\leq -\frac{L v^{\gamma}\xi_{R}(v)\partial_{v} H(x,v,t)}{1+|x|^{m-d}},\label{the choice of L}
    \end{align}
    where in the last inequality we used the definition of $L$ in Definition \ref{definition supersolution} and that $B_{t}\leq 2$ by Remark \ref{choice of time}. Thus (\ref{supersolution}) holds in this case.

\bigskip

\textbf{Proof of  (\ref{third ingredient supersolution}) for} $x\in[(1-2\delta) v^{\alpha}t, (1+2\delta) v^{\alpha}t]$ or, alternatively, $v^{\alpha}t\in[\frac{x}{1+2\delta}, \frac{x}{1-2\delta}].$ 

 We will  assume  that $v\geq 1$ and $\gamma>1$, since the other cases are similar but easier to treat.

In order to prove (\ref{third ingredient supersolution}) we will first  show that
\begin{align}\label{step one problematic region supersolution}
    &\int_{0}^{\frac{v}{2}}K(v,v')f_{n}(x,v',t)|H(x,v-v',t)-H(x,v,t)|\der v'\leq CL(1+t^{\frac{\alpha+1-\gamma}{\alpha}-1}) H(x,v,t)
    \end{align}
and then that
\begin{align}\label{step two problematic region supersolution}
      &\frac{v^{\gamma}}{1+|x|^{m-d}}|\partial_{v}H(x,v,t)|\leq CLt^{\frac{\alpha+1-\gamma}{\alpha}-1} H(x,v,t),
\end{align}
for $H$ and $L$ as in Definition \ref{definition supersolution}. If  (\ref{step one problematic region supersolution}) and (\ref{step two problematic region supersolution}) hold,
then   (\ref{third ingredient supersolution}) holds. Indeed, arguing as in \eqref{positivity of the additional term part two}, \eqref{positivity of the additional term} and using $f_n \leq G$, we find 
\begin{align*}
(*):=\partial_t G& + v^{\alpha}\partial_{x} G -\int_0^{\frac{v}{2}} K(v-v',v') G(x,v-v',t) f_n(x,v',t)\,dv' \\
&\qquad + \int_0^{\infty} K(v,v') G(x,v,t)f_n(x,v',t)\,dv'\\
&\geq B_t \bigg ( \partial_t H + v^{\alpha}  \partial_{x} H +\frac{{L}v^{\gamma}}{1+|x|^{m-d}}\xi_{R}(v)\partial_{v}H(x,v,t)\bigg)\\
 & - B_t \frac{{L}v^{\gamma}}{1+|x|^{m-d}}|\partial_{v}H(x,v,t)|- B_t v^{\gamma}\int_{0}^{\frac{v}{2}}| H (x,v-v',t)-
  H (x,v,t)|f_n(x,v',t)\der v' \\
 &\qquad + \lambda (c_{\alpha}t^{\frac{\alpha+1-\gamma}{\alpha}-1}+1) B_t  H .
 \end{align*}
Using now (\ref{step one problematic region supersolution}) and (\ref{step two problematic region supersolution}), we can conclude that
\begin{align*}
 (*)&\geq B_t  \bigg ( \partial_t H + v^{\alpha}  \partial_{x} H +\frac{{L}v^{\gamma}}{1+|x|^{m-d}}\xi_{R}(v)\partial_{v}H(x,v,t)\bigg)\\
 & \qquad + B_t \big[ - CL-2LCt^{\frac{\alpha+1-\gamma}{\alpha}-1} + \lambda (c_{\alpha}t^{\frac{\alpha+1-\gamma}{\alpha}-1}+1)\big]  H \\
 & \geq \partial_t G + v^{\alpha} \partial_x G +\frac{{L}v^{\gamma}}{1+|x|^{m-d}}\xi_{R}(v)\partial_{v}G(x,v,t)
\end{align*}
if $\lambda$ is sufficiently large. 

\begin{rmk}
    It is worthwhile to mention that $L$ depends only on $K_{1}$ from \eqref{condition on gamma existence}, $K_{2}$ from Lemma \ref{L.Hproperties}, and $K_{3}$ from  Lemma \ref{estimate for mass}, see Definition \ref{definition supersolution}.
\end{rmk}

We now prove that (\ref{step one problematic region supersolution}) holds. Let $\eta\in(0,1)$ be fixed and sufficiently small. We want to bound the following terms.
    \begin{align*}
    &\int_{\eta v}^{\frac{v}{2}}K(v,v')f_{n}(x,v',t)|H(x,v-v',t)-H(x,v,t)|\der v'\\
    &+\int_{0}^{\eta v}K(v,v')f_{n}(x,v',t)|H(x,v-v',t)-H(x,v,t)|\der v'=:J_{1}+J_{2}.
\end{align*}
We analyze each term separately. We notice that, since $\delta<1$ and $t$ is sufficiently small, we have that $v^{\alpha}\geq \frac{x}{(1+2\delta)t}\geq 2K_{max}xt^{\frac{1}{m-1}}\geq 2v_{\textup{max}}^{\alpha}$ in this region, where $K_{max}$ is as in \eqref{vmaxestimate}. Thus we can assume in all the following that 
\begin{align}\label{v and v' are correct region}
v\geq 2 v_{\textup{max}} \textup{ and } v'\geq v_{\textup{max}}, \textup{ for all } v'\in\Big[\frac{v}{2},v\Big].
\end{align}
Moreover, by (\ref{form solution characteristics}), it holds that $G(x,v,t)= \frac{C_{0}B_{t}}{1+|X|^{m}+V^{p}}\leq \frac{C_{0}B_{t}}{1+V^{p}}\leq \frac{2^{p}C_{0}B_{t}}{1+v^{p}}$. Using \eqref{H estimate problematic region} and the fact that $B_{t}\leq 2$, for $t\leq T$, we have 
\begin{align*}
   J_{1}&\leq C \int_{\eta v}^{\frac{v}{2}}v^{\gamma}[H(x,v-v',t)+H(x,v,t)]G(x,v',t)\der v'\leq C H(x,v,t) \int_{\eta v}^{\frac{v}{2}}\frac{v^{\gamma}}{1+v'^{p}}\der v'\\
    &\leq C(\eta)H(x,v,t)\int_{0}^{\infty}\frac{v'^{\gamma}}{1+v'^{p}}\der v'\leq C H(x,v,t).
\end{align*}
For the second term, it holds that
\begin{align*}
   J_{2}\leq Cv^{\gamma}\int_{0}^{\eta v}|H(x,v-v',t)-H(x,v,t)|G(x,v',t)\der v'\,.
\end{align*}
We remember we are in the region where $v^{\alpha}t\in[\frac{x}{1+2\delta}, \frac{x}{1-2\delta}]$ and thus $v'^{\alpha}\leq \eta^{\alpha}v^{\alpha}\leq \frac{\eta^{\alpha}x}{(1-2\delta)t}$. Due to \eqref{Xestimate}, it follows that
\begin{align*}
  X(x,v',t)\geq x-(1+\delta)v'^{\alpha}t  \geq x-\frac{(1+\delta)\eta^{\alpha}x}{1-2\delta}=  \frac{1-2\delta-(1+\delta)\eta^{\alpha}}{1-2\delta}x,
\end{align*}
for all $v'\in[0,\eta v]$. Since $\delta\leq\frac{1}{4}$ it follows that we can choose $\eta$ to be sufficiently small, but independent of $\delta$, such that $1-2\delta-(1+\delta)\eta^{\alpha}\geq \frac{1}{4}$ and thus
\begin{align*}
   X(x,v',t) \geq (1-2\delta-(1+\delta)\eta^{\alpha})x\geq \frac{x}{4},
\end{align*}
for all $v'\in[0,\eta v]$. Thus, since $v^{\alpha}t\in[\frac{x}{1+2\delta}, \frac{x}{1-2\delta}]$ and $\delta\leq\frac{1}{4}$, we have that  $X(x,v',t)\geq \frac{v^{\alpha}t}{8}$. It follows that
\begin{align*}
   J_{2}&\leq CB_{t}v^{\gamma}\int_{0}^{\eta v}\frac{\int_{v-v'}^{v}|\partial_{\tilde{v}}H(x,\tilde{v},t)|\der \tilde{v}\der v'}{1+(v^{\alpha}t)^{m}+v'^{p}}\leq C v^{\gamma}\int_{0}^{\eta v}\frac{\int_{v-v'}^{v}|\partial_{\tilde{v}}H(x,\tilde{v},t)|\der \tilde{v}\der v'}{1+(v^{\alpha}t)^{m}+v'^{p}}
\end{align*}
since $B_{t}\leq 2$. Let now $\tilde{v}\in[v-v',v]$, with $v'\in[0,\eta v].$ We have that 
\begin{align*}
|\partial_{\tilde{v}}H(x,\tilde{v},t)|\leq C_{0}\frac{|m|X|^{m-2}X\partial_{\tilde{v}}X+V^{p-1}\partial_{\tilde{v}}V|}{(1+|X|^{m}+V^{p})^{2}}(x,\tilde{v},t)\leq C_{0}\frac{|m|X|^{m-2}X\partial_{\tilde{v}}X+V^{p-1}\partial_{\tilde{v}}V|}{(1+V^{p})^{2}}(x,\tilde{v},t).
\end{align*}

Assume $v^{\gamma-1}t\geq 1$ and remember we are in the case when $\gamma>1$ and $v\geq 1$. Since $v'\leq \eta v$ it holds that $\frac{v}{2} \leq \tilde{v}\leq v$ for $\tilde{v}\in[v-v',v]$. In addition, by \eqref{v and v' are correct region}, we have that $\tilde{v}\geq v_{\textup{max}}$. If $\tilde{v}\geq \frac{x}{1+2\delta}$, then from (\ref{dvVestimate problematic region}), we have that
\begin{align}\label{estimate both cases}
    |m|X|^{m-2}X\partial_{\tilde{v}}X+V^{p-1}\partial_{\tilde{v}}V|\leq C L \Big( |X(x,\tilde v, t)|^{m-1}\tilde{v}^{\alpha-1}t+\tilde{v}^{p+\gamma-2}t\Big).
    \end{align}
    Otherwise, if  $\tilde{v}< \frac{x}{1+2\delta}$, then \eqref{estimate both cases} still holds since $v^{\gamma-1}t\geq 1$.

 Moreover, since $\frac{v}{2} \leq \tilde{v}\leq v$ for $\tilde{v}\in[v-v',v]$, it holds that $|X(x,\tilde v,t)|\leq Cv^{\alpha}t\leq C v^{\alpha}$ and thus $|X(x,\tilde v, t)|^{m-1}\tilde{v}^{\alpha-1}t\leq C v^{p-1}t\leq C v^{p+\gamma-2}t$ since $\gamma>1$. Thus
\begin{align}\label{estimate for the derivative of g supersolution}
|\partial_{\tilde{v}}H(x,\tilde{v},t)|\leq \frac{CLv^{p+\gamma-2}t}{(1+v^{p})^{2}}
\end{align}
and thus it holds that
\begin{align*}
   J_{2}\leq CL v^{2(\gamma-1)}\frac{v^{p}t}{(1+v^{p})^{2}}\int_{0}^{\eta v}\frac{v'\der v'}{1+(v^{\alpha}t)^{m}+v'^{p}}.
\end{align*}

By making the change of variables $v'=(1+v^{p}t^{m})^{\frac{1}{p}}\xi$, we further obtain that
\begin{align*}
   J_{2}\leq CLv^{2(\gamma-1)}\frac{v^{p}}{(1+v^{p})^{2}}\frac{t}{(1+v^{p}t^{m})^{1-\frac{2}{p}}}\int_{0}^{\infty}\frac{\xi \,\der \xi}{1+\xi^{p}}\leq CL v^{2(\gamma-1)}\frac{v^{p}}{(1+v^{p})^{2}}\frac{t }{(1+v^{p}t^{m})^{1-\frac{2}{p}}}.
\end{align*}
Remembering the definition of $H$ and since we have, due to \eqref{Vestimate}, that
\begin{align}\label{lower bound g problematic region}
    H(x,v,t)=\frac{C_{0}}{1+|X(x,v,t)|^{m}+V(x,v,t)^{p}}\geq \frac{C}{1+v^{p}}
\end{align}
we deduce that 
\begin{align}\label{T2est}
   J_{2}\leq \frac{ CL v^{2(\gamma-1)}t}{(1+v^{p}t^{m})^{1-\frac{2}{p}}}H(x,v,t).
\end{align}
We now analyze the term $\frac{ v^{2(\gamma-1)}t}{(1+v^{p}t^{m})^{1-\frac{2}{p}}}$. It holds that
\begin{align}\label{last last step}
    \frac{ v^{2(\gamma-1)}t}{(1+v^{p}t^{m})^{1-\frac{2}{p}}}=\frac{ (v^{\alpha}t)^{\frac{2(\gamma-1)}{\alpha}}t^{1-\frac{2(\gamma-1)}{\alpha}}}{(1+(v^{\alpha}t)^{m})^{1-\frac{2}{p}}}\leq t^{1-\frac{2(\gamma-1)}{\alpha}}\leq t^{-\frac{\gamma-1}{\alpha}}.
\end{align}
It holds that $-\frac{\gamma-1}{\alpha}>-1$ since $\gamma<\alpha+1$ and thus we have that $\textup{e}^{t^{1-\frac{\gamma-1}{\alpha}}}\leq C$. Then, \eqref{step one problematic region supersolution} follows from \eqref{T2est}.

If $v^{\gamma-1}t \leq 1$, we use (\ref{dvVestimate problematic region}) and then (\ref{estimate for the derivative of g supersolution}) becomes 
\begin{align}\label{gamma t small one}
|\partial_{\tilde{v}}H(x,\tilde{v},t)|\leq \frac{CL v^{p-1}}{(1+v^{p})^{2}}.
\end{align}
We can conclude using the same computations as above and by noticing that  $-\frac{(\gamma-1)}{\alpha}>-1$.

Finally, we need to prove that (\ref{step two problematic region supersolution}) holds in the case when $x\in[(1-2\delta) v^{\alpha}t,(1+2\delta)v^{\alpha}t]$. Assume first that $v^{\gamma-1}t \geq 1$. We have that
\begin{align*}
 &\frac{v^{\gamma}}{1+|x|^{m-d}}|\partial_{v}H(x,v,t)|\leq \frac{C v^{\gamma}|\partial_{v}H(x,v,t)|}{1+(v^{\alpha}t)^{m-d}}. 
 \end{align*}
 Making use of (\ref{estimate for the derivative of g supersolution}) and (\ref{lower bound g problematic region}), we further obtain as before that
 \begin{align*}
 &\frac{v^{\gamma}}{1+|x|^{m-d}}|\partial_{v}H(x,v,t)|\leq CL v^{2(\gamma-1)}\frac{v^{p}}{1+v^{p}}\frac{t }{1+(v^{\alpha}t)^{m-d}}H(x,v,t) 
 \end{align*}
 and we can then use similar arguments as in (\ref{last last step}). The case when $v^{\gamma-1}t \leq 1$ can be proved similarly using (\ref{gamma t small one}) instead of (\ref{estimate for the derivative of g supersolution}). This concludes our proof.
\end{proof}
\subsection{Proof of Theorem \ref{main theorem}}\label{subsection existence of limit}

In this subsection, we finish the proof of Theorem \ref{main theorem} by establishing  that there exists a limit for the sequence $\{f_{n}\}_{n\in\mathbb{N}}$ defined in (\ref{inductive sequence}) and then passing to the limit in the equation. Our proof has analogies with the methods used to solve symmetric hyperbolic systems, see for example \cite{majda}.

We first prove some bounds that are independent of $t$ for the function $G$ defined in Definition \ref{definition supersolution}.
\begin{lem}
Let $T>0$ be sufficiently small. Then it holds that
\begin{align}
    G(x,v,t)&\leq \frac{2^{m}C_{0}B_{t}}{1+|x|^{m}+v^{p}}\leq \frac{2^{m+1}C_{0}}{1+|x|^{m}+v^{p}};\label{bound indep of t for g 1}\\
    G(x,v-v',t)&\leq  \frac{2^{m+1}K_{2}C_{0}}{1+|x|^{m}+v^{p}} \textup{ for all } v'\in\bigg(0,\frac{v}{2}\bigg),\label{bound indep of t for g 2}
    \end{align}
 for all $v>0$,  $x\in\mathbb{R}$, and all $t\in[0,T]$, where $B_{t}$ was defined in (\ref{Btdef}), $K_{2}$ is as in Lemma \ref{L.Hproperties}, and $C_{0}$ is as in (\ref{form solution characteristics}).
\end{lem}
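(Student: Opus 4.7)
The plan is to reduce both inequalities to pointwise bounds on $H$ defined in Definition \ref{definition h for all l}, exploiting $G = B_{t} H$ with $B_{t} \leq 2$ on $[0,T]$ (Remark \ref{choice of time}). It therefore suffices to prove $H(x,v,t)\leq 2^{m}C_{0}/(1+|x|^{m}+v^{p})$ and then analogously at $v-v'$; the factor $B_{t} \leq 2$ upgrades $2^{m}$ to $2^{m+1}$.

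For \eqref{bound indep of t for g 1} I would split on the two branches in the definition of $H$. When $H(x,v,t) = G_{L}(x,v,t)$, the representation \eqref{form solution characteristics} reduces the task to showing $1+|X(x,v,t)|^{m}+V(x,v,t)^{p}\geq 2^{-m}(1+|x|^{m}+v^{p})$. The bound $V\geq(1-\delta)v\geq v/2$ from \eqref{Vestimate} controls the $V^{p}$ contribution. For $|X|$ I would separate the sign of $x$: if $x\leq 0$, then \eqref{xnegative} yields $|X|\geq|x|$ immediately; if $x>0$, I distinguish the regimes $v^{\alpha}t\leq x/2$ (where \eqref{xpos2} gives $|X|\geq x-(1+\delta)v^{\alpha}t\geq x/4$) and $v^{\alpha}t>x/2$ (where $v^{p}=v^{\alpha m}\geq (x/t)^{m}\geq |x|^{m}$ since $t\leq 1$, so $V^{p}$ alone dominates $|x|^{m}$). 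In the remaining branch $H(x,v,t)=G_{L}(x,v_{\max}(x,t),t)$ with $v\leq v_{\max}$, I invoke \eqref{x estimate independent of delta} from the proof of Lemma \ref{estimate for mass for h}, namely $X(x,v_{\max},t)\geq x/2$, together with $V(x,v_{\max},t)\geq v_{\max}/2\geq v/2$, to conclude the same lower bound on $1+|X|^{m}+V^{p}$.

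For \eqref{bound indep of t for g 2} I would treat the cases $x>0$ and $x\leq 0$ separately. When $x>0$, estimate \eqref{new H estimate} in Lemma \ref{L.Hproperties} gives $H(x,v-v',t)\leq K_{2}H(x,v,t)$ directly, and composing with \eqref{bound indep of t for g 1} yields
\[
G(x,v-v',t)=B_{t}H(x,v-v',t)\leq K_{2}G(x,v,t)\leq \frac{2^{m+1}K_{2}C_{0}}{1+|x|^{m}+v^{p}}.
\]
When $x\leq 0$, the function $G_{L}$ is monotonically decreasing in $v$ so that $H=G_{L}$, and I estimate directly from the characteristics: by \eqref{xnegative}, $|X(x,v-v',t)|\geq |x|$, and by \eqref{Vestimate}, $V(x,v-v',t)\geq (v-v')/2\geq v/4$. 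Hence $1+|X|^{m}+V^{p}\geq (1+|x|^{m}+v^{p})/4^{p}$, and the resulting universal constant is absorbed into $K_{2}$ (which is independent of $\delta,L,R$).

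The proof is essentially bookkeeping over the cases built into Definition \ref{definition h for all l} and the corollary \eqref{xnegative}--\eqref{xpos3}; the main nuisance --- not a conceptual obstacle --- is tracking the universal constants through each regime so that they coalesce into the clean factors $2^{m}$ and $2^{m+1}K_{2}$. In particular, the subcase $x>0$ with $v^{\alpha}t>x/2$ requires one to give up on the $|X|^{m}$ contribution entirely and extract the full $|x|^{m}$ from $V^{p}$, which is precisely where the relation $p=\alpha m$ and the smallness $t\leq 1$ enter.
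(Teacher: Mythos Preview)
Your proof is correct and follows essentially the same approach as the paper: both split according to the two branches of $H$ and the position of $x$ relative to $v^{\alpha}t$, then invoke the characteristic bounds \eqref{Vestimate}--\eqref{xpos3} to control $1+|X|^{m}+V^{p}$ from below, and both derive \eqref{bound indep of t for g 2} from \eqref{bound indep of t for g 1} via \eqref{new H estimate} for $x>0$ and a direct characteristic estimate for $x\leq 0$. Your case split is slightly coarser (threshold $v^{\alpha}t=x/2$ instead of the paper's $\delta$-dependent intervals), and there is a harmless slip in the second regime --- from $v^{\alpha}t>x/2$ one obtains $v^{p}>(x/(2t))^{m}$ rather than $(x/t)^{m}$ --- but the conclusion holds for $T$ small.
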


\begin{proof}
To prove  \eqref{bound indep of t for g 1} we notice first that it holds that $B_{t}\leq 2$ if we take $t\leq 1$ to be sufficiently small, where $B_{t}$ was defined in \eqref{Btdef}.

We first consider the case $x>0$ and $v\leq v_{\textup{max}}$, where $v_{\textup{max}}(x,t)$ was defined in (\ref{definition v max}).

From Definition \ref{definition supersolution} and the fact that if $v\leq v_{\textup{max}}(x,t)$ then we are in the region where $t\leq \frac{x}{v^{\alpha}(1+2\delta)}$, we can use the bound in (\ref{xpos2}) to deduce that 
\begin{align}
    G(x,v,t)&\leq\frac{C_{0}}{1+|x-(1+\delta)v_{\textup{max}}(x,t)^{\alpha}t|^{m}+(1-\delta)^{p}v_{\textup{max}}(x,t)^{p}},
\end{align}
when $v\leq v_{\textup{max}}(x,t)$, where $C_{0}$ is as in (\ref{form solution characteristics}).

 By \eqref{vmaxestimate}, we have that $\frac{1}{K_{max}}xt^{\frac{1}{m-1}}\leq v_{\textup{max}}(x,t)\leq K_{max} xt^{\frac{1}{m-1}}$ and thus $x\geq x-(1+\delta)v_{\textup{max}}(x,t)^{\alpha}t\geq \frac{x}{2}>0$ when  $x>0$ if $t$ is sufficiently small. Thus, using in addition that $v\leq v_{\textup{max}}$ and that $\delta<\frac{1}{2}$, it holds that $(1-\delta)^{p}v_{\textup{max}}^{p}\geq \frac{v^{p}}{2^{p}}$. Thus,
\begin{align}\label{bound on cut region g}
    G(x,v,t)=G(x,v_{\textup{max}}(x,t),t)&\leq\frac{2^{m}C_{0}}{1+|x|^{m}+v^{p}}.
\end{align}

We now treat the case when $t\in[\frac{x}{(1+2\delta)v^{\alpha}},\frac{x}{(1-2\delta)v^{\alpha}}]$. From \eqref{form solution characteristics} and then using the fact that $x\leq 2v^{\alpha}t\leq v^{\alpha}$, it follows that
\begin{align*}
    G(x,v,t)\leq \frac{2^{p}C_{0}}{1+v^{p}}\leq \frac{2^{p+1}C_{0}}{1+|x|^{m}+v^{p}}.
\end{align*}

If $v^{\alpha}t\leq \frac{x}{1+2\delta}$, from \eqref{xpos2} it holds that
\begin{align*}
    G(x,v,t)\leq \frac{C_{0}}{1+|x-(1+\delta)v^{\alpha}t|^{m}+(1-\delta)^{p}v^{p}}
\end{align*}
and  \eqref{bound indep of t for g 1} follows from (\ref{positive part one}).

   Finally, if $v^{\alpha}t\geq \frac{x}{1-2\delta}$, from \eqref{xpos1} it holds that
\begin{align*}
    G(x,v,t)\leq \frac{C_{0}}{1+|x-(1-\delta)v^{\alpha}t|^{m}+(1-\delta)^{p}v^{p}}
\end{align*}
and  \eqref{bound indep of t for g 1} follows from (\ref{Jestimate}).

 If  $x\leq 0$, from \eqref{xnegative} we have that $|X(x,v,t)|\geq|x-(1-\delta)v^{\alpha}t|=|x|+(1-\delta)v^{\alpha}t$
    and the conclusion follows.

\eqref{bound indep of t for g 2} follows from \eqref{bound indep of t for g 1}, \eqref{new H estimate}, and \eqref{x neg below}. 
\end{proof}
Using these bounds, we can now prove that there exists a limit for the sequence $\{f_{n}\}_{n\in\mathbb{N}}$ that was defined in (\ref{inductive sequence}). 
\begin{prop}\label{sequence has a limit}
Let $T>0$ be sufficiently small. For every $\epsilon>0$, there exists $n_{\epsilon}\in\mathbb{N}$ such that, for every $n,m\geq n_{\epsilon}$, it holds that $||f_{n}-f_{m}||_{\infty}:=\sup_{t\in[0,T], x\in\mathbb{R},v\in(0,\infty)}|f_{n}(x,v,t)-f_{m}(x,v,t)|\leq \epsilon$.
\end{prop}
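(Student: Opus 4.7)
The plan is to turn the mild formulation of the iterative scheme into a Gr\"onwall-type inequality for the consecutive differences $w_n:=f_{n+1}-f_n$. First, for each $n\in\mathbb{N}$, I would write the iterate in mild form, analogous to (\ref{mild solution equation}),
\[
f_{n+1}(x,v,t) = f_{\textup{in}}(x-v^\alpha t, v)\,S_n(x,v,0,t) + \tfrac{1}{2}\int_0^t S_n(x,v,s,t)\,Q_n^+(x,v,s)\,\der s\,,
\]
where $S_n(x,v,s,t):=\exp\!\big({-}\int_s^t a[f_n](x-v^\alpha(t-\tau),v,\tau)\,\der\tau\big)$ and $Q_n^+$ denotes the coagulation gain along the characteristic $s\mapsto x-v^\alpha(t-s)$ with weights $f_{n+1}\cdot f_n$. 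Subtracting this representation for indices $n$ and $n-1$, and using $f_{n+1}f_n - f_n f_{n-1} = w_n f_n + f_n w_{n-1}$ together with $|S_n - S_{n-1}| \leq \int_s^t a[|w_{n-1}|]\,\der\tau$, one expresses $w_n$ (which vanishes at $t=0$ since all iterates share the same initial datum) as a time integral of linear combinations of $w_n$ and $w_{n-1}$ at shifted arguments, with coefficients built from $f_n$ and $f_{n+1}$.

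Next I would measure $w_n$ in the weighted $L^\infty$-norm
\[
\|w\|_t := \sup_{s\in[0,t]}\,\sup_{(x,v)\in\mathbb{R}\times (0,\infty)} (1+|x|^m + v^p)\,|w(x,v,s)|\,,
\]
which, by Proposition \ref{proposition about supersolution} and (\ref{bound indep of t for g 1}), controls the whole sequence uniformly: $\|f_n\|_T \leq 2^{m+1}C_0$. The essential pointwise comparison
\[
1+|x|^m + v^p \leq C\,\big(1+|x-v^\alpha(t-s)|^m + (v-v')^p\big)\quad\text{for }v'\in(0,\tfrac{v}{2}),\ t-s\in[0,T]\,,
\]
which is obtained by the same case split already used for (\ref{ineq}), (\ref{positive part one}) and (\ref{x neg below}), lets me transfer the weight across the transport shift and across the coagulation convolution. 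Combining this with $K(v,v')\leq K_1(v^\gamma + v'^\gamma)$, the moment estimate (\ref{estimate for all moment}) to control $a[f_n]$ and $a[f_{n+1}]$, and the auxiliary bound (\ref{bound needed for the cauchy sequence}) (using $m>\tfrac{2\gamma+1}{\alpha}$), I would derive
\[
\|w_n\|_t \,\leq\, A\int_0^t \|w_n\|_s\,\der s + A\int_0^t \|w_{n-1}\|_s\,\der s
\]
with a constant $A$ depending only on $C_0, K_1, K_2, K_3$ and on $\alpha, \gamma, m, p$, but not on $n$.

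A standard Gr\"onwall argument applied to the first integral on the right gives $\|w_n\|_t \leq A\textup{e}^{AT}\int_0^t \|w_{n-1}\|_s\,\der s$, and iterating yields $\|w_n\|_T \leq \frac{(A'T)^n}{n!}\|w_0\|_T$, which is summable. Hence $\{f_n\}$ is Cauchy in $\|\cdot\|_T$, and because the weight is bounded below by $1$, it is also Cauchy in the plain $L^\infty$-norm of the proposition. The main obstacle in this program is the weight-transfer step: pulling the factor $1+|x|^m+v^p$ inside the time and $v'$ integrations at the shifted arguments $(x-v^\alpha(t-s), v-v', s)$ without producing an unbounded factor requires essentially the same delicate case split as in the proof of Lemma \ref{L.Hproperties}, in particular near the critical region $x\approx v^\alpha t$ where $v_{\textup{max}}$ is located. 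Once this comparison is in hand, the remainder of the argument is the routine Picard-type iteration sketched above.
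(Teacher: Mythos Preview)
Your overall strategy---measure consecutive differences in a weighted $L^\infty$ norm, derive a recursive integral inequality, and iterate---is sound, but the specific weight you chose does not close the loop when $\gamma>\alpha$, which is precisely the regime of interest here ($\gamma>1>\alpha$).

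The weight-transfer step you flag as the main obstacle actually goes through: for $v'\in(0,\tfrac v2)$ and $y=x-v^\alpha(t-s)$ with $0\leq t-s\leq T$ small one indeed has $1+|x|^m+v^p\leq C\big(1+|y|^m+(v-v')^p\big)$, by exactly the case split you indicate. The real obstruction lies elsewhere. After using this transfer and the uniform bound $f_n(y,v-v',s)\leq C(1+|y|^m+(v-v')^p)^{-1}$, the gain contribution involving $w_{n-1}$ becomes
\[
(1+|x|^m+v^p)\,|\text{gain}|\ \leq\ C\int_0^t \|w_{n-1}\|_s\int_0^{v/2}\frac{K(v-v',v')}{1+|y|^m+v'^p}\,\der v'\,\der s\ \leq\ C\int_0^t \|w_{n-1}\|_s\,\frac{v^\gamma}{1+|y|^{m-\frac{1}{\alpha}}}\,\der s,
\]
where the $v'$-integral was estimated by \eqref{bound needed for the cauchy sequence} with $n=0$. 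Substituting $z=v^\alpha(t-s)$ in the remaining time integral gives at best
\[
\int_0^t\frac{v^\gamma\,\der s}{1+|x-v^\alpha(t-s)|^{m-\frac{1}{\alpha}}}\ \leq\ v^{\gamma-\alpha}\int_{\R}\frac{\der z}{1+|z|^{m-\frac{1}{\alpha}}}\ =\ C\,v^{\gamma-\alpha},
\]
which is unbounded in $v$ whenever $\gamma>\alpha$. The same $v^\gamma$ factor contaminates the loss-difference term via $a[w_{n-1}]$. Hence the inequality $\|w_n\|_t\leq A\int_0^t\|w_n\|_s\,\der s+A\int_0^t\|w_{n-1}\|_s\,\der s$ cannot be obtained with the weight $1+|x|^m+v^p$; your constant $A$ would have to depend on $v$.

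The paper fixes this by bounding the differences against the weaker template
\[
|R_n(x,v,t)|\leq \frac{(CT)^n(1+v^\gamma)}{(1+|x|^m+v^p)(1+|x|^{m-d})},
\]
see \eqref{induction step for later use}. The extra $(1+v^\gamma)$ in the numerator absorbs the kernel growth, and the additional factor $(1+|x|^{m-d})^{-1}$ supplies the $x$-decay needed to control the $v'$-moments via \eqref{bound needed for the cauchy sequence} (this is where $m>\frac{2\gamma+1}{\alpha}$ enters). The linear-in-$R_n$ part is handled by Duhamel together with the supersolution machinery of Proposition~\ref{proposition about supersolution}, which produces the factor $(CT)^n$ rather than $(A'T)^n/n!$. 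Your scheme can be repaired by switching to this weaker norm, but then the delicate point is not the weight transfer along characteristics---it is accommodating the $v^\gamma$ coming from the kernel.
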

\begin{proof}
{\bf Step 1: (Set-up)}
Let $t\geq 0$, $x\in\mathbb{R}$ and $v> 0$. Let $\{f_{n}\}_{n\in\mathbb{N}}$ be the sequence defined in (\ref{inductive sequence}). For $n\in\mathbb{N}$, we denote by 
\begin{align}\label{def rn}
R_n(x,v,t):=f_{n+1}(x,v,t)-f_n(x,v,t).
\end{align}
Moreover, for two functions $f,g$, we denote by
\begin{align}\label{k1 term}
\mathbb{K}_{1}[f,g]:=\int_{0}^{\frac{v}{2}}K(v-v',v')f(x,v',t)g(x,v-v',t)\der v',
\end{align}

\begin{align}\label{k2 term}
\mathbb{K}_{2}[f,g]:=\int_{0}^{\infty}K(v,v')f(x,v',t)g(x,v,t)\der v',
\end{align}
and
\begin{align}\label{full k term}
\mathbb{K}[f,g]:=\mathbb{K}_{1}[f,g]-\mathbb{K}_{2}[f,g].
\end{align}
Using this notation, it holds that
\begin{align}\label{induction step with distance one}
\partial_t R_n(x,v,t)+v^{\alpha}\partial_x R_n(x,v,t)&=\mathbb{K}_{1}[f_{n},f_{n+1}]-\mathbb{K}_{1}[f_{n-1},f_{n}]-\mathbb{K}_{2}[f_{n},f_{n+1}]+\mathbb{K}_{2}[f_{n-1},f_{n}]\nonumber\\
&=\mathbb{K}_{1}[f_{n},f_{n+1}]-\mathbb{K}_{1}[f_{n},f_{n}]+\mathbb{K}_{1}[f_{n},f_{n}]-\mathbb{K}_{1}[f_{n-1},f_{n}]\nonumber\\
&-\mathbb{K}_{2}[f_{n},f_{n+1}]+\mathbb{K}_{2}[f_{n},f_{n}]-\mathbb{K}_{2}[f_{n},f_{n}]+\mathbb{K}_{2}[f_{n-1},f_{n}]\nonumber\\
&=\mathbb{K}[f_{n},R_n]+\mathbb{K}[R_{n-1},f_{n}].
\end{align}
\begin{rmk}\label{remark induction step}
    Notice that it suffices to analyze the term $R_{n}$ in order to obtain the statement of Proposition \ref{sequence has a limit}. This is since we can repeat the computations in (\ref{induction step with distance one}) to obtain
    \begin{align}\label{induction step with distance}
\partial_t [f_{m}-f_{n}]+v^{\alpha}\partial_x[f_{m}-f_{n}]=\mathbb{K}[f_{m-1},f_{m}-f_{n}]+\mathbb{K}[f_{m-1}-f_{n-1},f_{n}].
\end{align}
Since the estimates we will prove do not depend on $n,m\in\mathbb{N}$, we can reduce the problem to analyzing $R_{n}$ in order to simplify the notation.
\end{rmk}
Notice the following. From (\ref{induction step with distance one}) and (\ref{initial function}), we have that $R_{n}$ solves the following system
 \begin{equation}\label{duhamel 1}
\left\{\begin{aligned}
\partial_t R_n(x,v,t)+v^{\alpha}\partial_x R_n(x,v,t)&=\mathbb{K}_{1}[f_{n},R_n]+\mathbb{K}_{1}[R_{n-1},f_{n}]-\mathbb{K}_{2}[f_{n},R_{n}]-\mathbb{K}_{2}[R_{n-1},f_{n}]; \\
R_{n}(x,v,0)&=0.
   \end{aligned}\right.
   \end{equation}

Since the system is linear in $R_{n}$, by Duhamel's principle, it suffices to derive estimates for
 \begin{equation}\label{duhamel 2}
\left\{\begin{aligned}
\partial_t R^{s}_n(x,v,t)+v^{\alpha}\partial_x R^{s}_n(x,v,t)&=\mathbb{K}_{1}[f_{n},R^{s}_n]-\mathbb{K}_{2}[f_{n},R^{s}_{n}], \textup{ for } t>s; \\
R^{s}_{n}(x,v,s)&=\mathbb{K}_{1}[R_{n-1},f_{n}](x,v,s)-\mathbb{K}_{2}[R_{n-1},f_{n}](x,v,s).
   \end{aligned}\right.
   \end{equation}

We now prove suitable estimates for the inhomogeneous part in (\ref{duhamel 1}), which in turn will give us suitable estimates for $R_{n}$. 

\bigskip
{\bf Step 2: (Induction basis)}  It holds
\begin{align}\label{R1estimate}
R_1(x,v,t)\leq \frac{CT(v^{\gamma}+1)}{(1+|x|^{m}+v^{p})(1+|x|^{m-d})},
\end{align}
for all $t\in[0,T],$ where $d$ is as in (\ref{we need function to be even}).

We will prove \eqref{R1estimate} after (\ref{induction step for later use}) since the estimates are similar.

\bigskip
{\bf Step 3: (Induction step)}
It holds 
\begin{align}\label{induction step for later use}
    R_n(x,v,t)\leq \frac{(CT)^{n}(v^{\gamma}+1)}{(1+|x|^{m}+v^{p})(1+|x|^{m-d})},
\end{align}
for all $t\in[0,T],$ where $d$ is as in (\ref{we need function to be even}).

We assume by induction that there exists a constant $C>0$ such that

\begin{align}\label{estimate induction step}
    R_{n-1}(x,v,t)\leq \frac{(CT)^{n-1}(v^{\gamma}+1)}{(1+|x|^{m}+v^{p})(1+|x|^{m-d})}.
\end{align}

We estimate the inhomogeneous terms $\mathbb{K}_{1}[R_{n-1},f_{n}]$ and $\mathbb{K}_{2}[R_{n-1},f_{n}]$. Assume that the following inequality holds
\begin{align}\label{suitable estimate inhomogeneous terms}
   | \mathbb{K}_{1}[R_{n-1},f_{n}]-\mathbb{K}_{2}[R_{n-1},f_{n}]|\leq\frac{C^{n}T^{n-1}(v^{\gamma}+1)}{(1+|x|^{m}+v^{p})(1+|x|^{m-d})}.
\end{align}
From (\ref{duhamel 2}), it follows that
\begin{align}\label{boooound}
  R_{n}^{s}(x,v,t)\leq \frac{C^{n}T^{n-1}(v^{\gamma}+1)}{(1+|x|^{m}+v^{p})(1+|x|^{m-d})},
\end{align}
for $t\in[0,T]$, if $T$ is sufficiently small. Since from (\ref{duhamel 1}) and (\ref{duhamel 2}), we have that
\begin{align*}
R_{n}(x,v,t)=\int_{0}^{t}R_{n}^{s}(x,v,t)\der s,
\end{align*}
the conclusion (\ref{induction step for later use}) follows from (\ref{boooound}).

We are thus left to prove that (\ref{suitable estimate inhomogeneous terms}) holds. We start with $\mathbb{K}_{2}[R_{n-1},f_{n}]$. Using Proposition \ref{proposition about supersolution}, (\ref{bound indep of t for g 1}) and the fact that $K(v,v')\leq K_{1}(v^{\gamma}+v'^{\gamma})$ from (\ref{condition on gamma existence}), we have that
\begin{align}
\mathbb{K}_{2}[R_{n-1},f_{n}]&=\int_{0}^{\infty}K(v,v')R_{n-1}(x,v',t)f_{n}(x,v,t)\der v'\leq \frac{Cv^{\gamma}}{1+|x|^{m}+v^{p}}\int_{0}^{\infty}R_{n-1}(x,v',t)\der v'\nonumber\\
&+\frac{C}{1+|x|^{m}+v^{p}}\int_{0}^{\infty}v'^{\gamma}R_{n-1}(x,v',t)\der v'.\label{rn2}
\end{align}
Using assumption (\ref{estimate induction step}) and then (\ref{bound needed for the cauchy sequence}) for $n=0$ and $n=2\gamma$, we further obtain that
\begin{align}
    \int_{0}^{\infty}(1+v'^{\gamma})R_{n-1}(x,v',t)\der v' &\leq  \frac{(CT)^{n-1}}{1+|x|^{m-d}}\int_{0}^{\infty}\frac{(v'^{\gamma}+1)^{2}}{1+|x|^{m}+v'^{p}}\der v'\nonumber\\
    &\leq \frac{C^{n}T^{n-1}}{\Big(1+|x|^{m-d}\Big)\Big(1+|x|^{m-\frac{2\gamma+1}{\alpha}}\Big)}\leq  \frac{C^{n}T^{n-1}}{1+|x|^{m-d}}.\label{rn1}
\end{align}
Plugging (\ref{rn1}) into (\ref{rn2}), we deduce that
\begin{align*}
\mathbb{K}_{2}[R_{n-1},f_{n}]\leq\frac{C^{n}T^{n-1}(v^{\gamma}+1)}{(1+|x|^{m}+v^{p})(1+|x|^{m-d})}.
\end{align*}
Similarly, we can bound the term $\mathbb{K}_{1}[R_{n-1},f_{n}]$. More precisely, since $v'\in(0,\frac{v}{2})$, we make use of  Proposition \ref{proposition about supersolution} and (\ref{bound indep of t for g 2}) and, as before, it holds that
\begin{align*}
\mathbb{K}_{1}[R_{n-1},f_{n}]&=\int_{0}^{\frac{v}{2}}K(v-v',v')R_{n-1}(x,v',t)f_{n}(x,v-v',t)\der v'\\
&\leq  \frac{Cv^{\gamma}}{1+|x|^{m}+v^{p}}\int_{0}^{\infty}R_{n-1}(x,v',t)\der v'\\
&\leq\frac{C^{n}T^{n-1}(v^{\gamma}+1)}{(1+|x|^{m}+v^{p})(1+|x|^{m-d})}.
\end{align*}
This concludes the proof of \eqref{induction step for later use}.

\bigskip
{\bf Step 4: (Proof of \eqref{R1estimate})}
Using the definition of $f_{0}$ in (\ref{definition f0}), we obtain that
  \begin{align*}
     \partial_t R_1(x,v,t)+v^{\alpha}\partial_x R_1(x,v,t)=\mathbb{K}[f_{0},R_{1}]+\mathbb{K}[f_{0},f_{0}].
\end{align*}  
Following the steps of the proof of \eqref{induction step for later use}, we obtain that
\begin{align*}
    \mathbb{K}[f_{0},f_{0}]\leq \frac{C(1+v^{\gamma})}{(1+|x|^{m}+v^{p})(1+|x|^{m-d})}
\end{align*}
and the conclusion follows by Duhamel's principle as before.

\bigskip
{\bf Step 5: (Conclusion)}
We combine Remark \ref{remark induction step} with (\ref{R1estimate}) and (\ref{induction step for later use}) and choose the time $T$ in (\ref{induction step for later use}) to be sufficiently small, such that the right-hand side of (\ref{induction step for later use}) tends to zero as $n\rightarrow \infty$.

\end{proof}
We are now able to conclude the proof  of Theorem \ref{main theorem}.
\begin{proof}[Proof of Theorem \ref{main theorem}]

We first prove that if $t\leq T$ and $T$ is sufficiently small, it holds that
\begin{align}\label{bound for f0}
    f_{0}(x,v,t)\leq \frac{2C_{0}}{1+|x|^{m}+v^{p}}.
\end{align}From (\ref{definition f0}) and (\ref{definition f0 initial condition}) it follows that
$f_{0}(x,v,t)\leq \frac{C_{0}}{1+|x-v^{\alpha}t|^{m}+v^{p}}.$ 
We first consider $x>0$.
If $v^{\alpha}t\leq x\bigg(1-\frac{1}{\sqrt[m]{2}}\bigg)$ then  $x-v^{\alpha}t\geq \frac{x}{\sqrt[m]{2}}$ and thus 
\begin{align*}
          \frac{1}{1+|x-v^{\alpha}t|^{m}+v^{p}}\leq \frac{1}{1+\frac{x^m}{2}+v^p}\leq \frac{2}{1+x^m+v^p}.
    \end{align*}
    If $v^{\alpha}t\geq  x\bigg(1-\frac{1}{\sqrt[m]{2}}\bigg)$ then $t\neq 0$ and since $t \leq 1$ is sufficiently small we have that
$    v^{p}\geq 2x^{m}$ and thus
      \begin{align*}
          \frac{1}{1+|x-v^{\alpha}t|^{m}+v^{p}}\leq \frac{1}{1+v^p}=\frac{1}{1+\frac{v^p}{2}+\frac{v^p}{2}}\leq \frac{1}{1+x^{m}+\frac{v^p}{2}}\leq \frac{2}{1+x^{m}+v^p}.
    \end{align*}
 If  $x\leq 0$, then, since $m$ is even, we have $(x-v^{\alpha}t)^{m}=(|x|+v^{\alpha}t)^{m}\geq x^{m}$
    and the conclusion follows.
    Then, by \eqref{bound for f0} and (\ref{bound needed for the cauchy sequence}), we obtain that 
\begin{align}\label{bound moment f0 final theorem}
    \int_{(0,\infty)}vf_{0}(x,v,t)\der v &\leq \frac{2K_{3}C_{0}}{1+|x|^{m-\frac{2}{\alpha}}}.
 \end{align}

On the other hand, by Lemma \ref{estimate for mass}, it follows that we can find a constant $K_{3}>0$ such that
\begin{align}\label{bound for g final theorem}
\int_{(0,\infty)}v
G (x,v,t)\der v &\leq \frac{K_{3}C_{0}B_t}{1+|x|^{m-d}},
\end{align}
where $d$ was defined in (\ref{we need function to be even}), $B_{t}$ was defined in (\ref{Btdef}), and  $G$ was defined in Definition \ref{definition supersolution}. Moreover, we can choose $t\leq 1$ to be sufficiently small (as in Remark \ref{choice of time}) such that $B_{t}\leq 2$ and thus it holds that $\int_{(0,\infty)}v
G (x,v,t)\der v \leq \frac{2K_{3}C_{0}}{1+|x|^{m-d}}$. 
 
We use induction in order to prove that $f_{n}\leq G$ for all $n\in\mathbb{N}$. By Proposition \ref{proposition about supersolution}, the induction step holds true. For the induction basis, we need to prove that \eqref{main ingredient supersolution} holds true for $n=0$. This is done with the same estimates as in Proposition \ref{proposition about supersolution} by using \eqref{bound moment f0 final theorem}. Thus,  if we take $L$ in (\ref{eq char}) to be as in Definition \ref{definition supersolution}, namely $L=4K_{1}K_{2}K_{3}C_{0}$, where $K_{1}$ is as in \eqref{condition on gamma existence}, $K_{2}$ is as in Lemma \ref{L.Hproperties}, and $K_{3}$ is as in (\ref{bound for g final theorem}), we can conclude that 
\begin{align}\label{main inequality to prove convergence}
    f_{n}(x,v,t)\leq G(x,v,t),
\end{align}
for all $n\in\mathbb{N}$.

From (\ref{main inequality to prove convergence}) and \eqref{bound indep of t for g 1}, it follows that there exists some $C>0$ such that
\begin{align}\label{this is all we need}
    f_{n}(x,v,t)\leq  \frac{C}{1+|x|^{m}+v^{p}},
\end{align}
for all $n\in\mathbb{N}$, $t\in[0,T]$, $x\in\mathbb{R}$, and $v\in(0,\infty)$.

By Proposition \ref{sequence has a limit} we have that the exists a limit of the sequence $\{f_{n}\}_{n\in\mathbb{N}}$. It remains to show that the limit of the sequence $\{f_{n}\}_{n\in\mathbb{N}}$ satisfies  equation (\ref{mild solution equation}). With the bound on $\{f_n\}$ in (\ref{this is all we need}) and the bound on the kernel \eqref{condition on gamma existence} it is completely standard to pass to the limit in the equation. Mass-conservation of $f_{n}$ follows by testing with $v$ in (\ref{inductive sequence}) and then integrating in $v$ and $x$. Mass-conservation of $f$ then follows by passing to the limit as $n\rightarrow\infty$ in $\int_{\mathbb{R}}\int_{(0,\infty)}vf_{n}(x,v,t)\der v \der x.$ We omit the details here.

 \end{proof}

\appendix

\section{Estimates for the second order derivative of $G_{L}$}\label{appendix c}

We will prove in this appendix that (\ref{second derivative is monotone}) holds true.

\begin{prop}\label{0 prop second derivative}
 Given $L>0$ and $\delta\in\big(0,\frac{1}{2}\big)$ there exists a sufficiently large $R>0$ such that for all $t\in[0,T]$, with $T$ sufficiently small, which is independent of $L,\delta,$ and $R$, it holds that 
     \begin{align}\label{monotonicity in v max}
 \partial_{v}^{2}G_{L}(x,v,t)< 0
  \end{align}
if
\begin{align}\label{App2.1}
    \frac{1}{K_{max}} x t^{\frac{1}{m-1}} \leq v^{\alpha}
 \leq K_{max} x t^{\frac{1}{m-1}},
\end{align} where $K_{max}$ is as in (\ref{vmaxestimate}).
\end{prop}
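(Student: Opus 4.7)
The plan is to recast the claim as a comparison of derivatives of the denominator. Writing $Q(x,v,t):=1+|X(x,v,t)|^{m}+V(x,v,t)^{p}$ so that $G_{L}=C_{0}/Q$, a direct computation gives
\[
\partial_{v}^{2}G_{L} \;=\; \frac{C_{0}}{Q^{3}}\bigl(2(\partial_{v}Q)^{2}-Q\,\partial_{v}^{2}Q\bigr),
\]
so it suffices to show $Q\,\partial_{v}^{2}Q>2(\partial_{v}Q)^{2}$ for every $v$ satisfying \eqref{App2.1} and every $t\in[0,T]$ with $T$ small.

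First I would identify the regime in which the characteristics lie. Under \eqref{App2.1}, one has $v^{\alpha}t\leq K_{\max}\,x\,t^{m/(m-1)}$, and after choosing $T$ small this is at most $x/4$. Consequently, by \eqref{Xestimate}, $X(x,v,t)\in[x/4,x]$ and in particular $X\geq 0$, while $V(x,v,t)\asymp v$ by \eqref{Vestimate}. We are therefore strictly outside the critical strip $x\in[(1-2\delta)v^{\alpha}t,(1+2\delta)v^{\alpha}t]$, and the uniform bounds of Proposition \ref{properties of the characteristics} apply, namely $|\partial_{v}X|\asymp v^{\alpha-1}t$ via \eqref{dvXestimate} and $\partial_{v}V\asymp 1$ via \eqref{dvVestimate}. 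Using the key scaling $v^{\alpha}\asymp x\,t^{1/(m-1)}$ one then obtains $X^{m-1}|\partial_{v}X|\asymp x^{m-1}v^{\alpha-1}t\asymp v^{p-1}$ and $V^{p-1}\partial_{v}V\asymp v^{p-1}$. Both pieces of
\[
\partial_{v}Q \;=\; mX^{m-1}\partial_{v}X + pV^{p-1}\partial_{v}V
\]
are thus bounded in absolute value by $C\,v^{p-1}$, giving $(\partial_{v}Q)^{2}\leq C\,v^{2(p-1)}$.

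Next I would estimate $\partial_{v}^{2}Q=m(m-1)X^{m-2}(\partial_{v}X)^{2}+mX^{m-1}\partial_{v}^{2}X+p(p-1)V^{p-2}(\partial_{v}V)^{2}+pV^{p-1}\partial_{v}^{2}V$ from below. The term $p(p-1)V^{p-2}(\partial_{v}V)^{2}$ is bounded below by $c\,v^{p-2}$ thanks to $\partial_{v}V\geq \frac14$, and $m(m-1)X^{m-2}(\partial_{v}X)^{2}$ is non-negative. For $mX^{m-1}\partial_{v}^{2}X$ and $pV^{p-1}\partial_{v}^{2}V$ I would differentiate \eqref{the original characteristics} once more, using the explicit formula \eqref{Phi-2} for $\Phi$ on $\{V\geq 2R\}$ together with \eqref{definition characteristics solution} and the already established first-order bounds, to obtain $|\partial_{v}^{2}X|\leq Cv^{\alpha-2}t$ and $|\partial_{v}^{2}V|\leq Cv^{-1}$ (with constants independent of $L,\delta,R$). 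Plugging in the relation $v^{\alpha}\asymp xt^{1/(m-1)}$ one checks that each of these contributions is of order at most $v^{p-2}t^{m/(m-1)}$, hence absorbed by the dominant positive term for $T$ small, so $\partial_{v}^{2}Q\geq \tfrac{c}{2}v^{p-2}$. Combined with $Q\geq 1+X^{m}\geq c\,x^{m}\asymp v^{p}t^{-m/(m-1)}$, this yields
\[
Q\,\partial_{v}^{2}Q \;\geq\; c\,\frac{v^{2p-2}}{t^{m/(m-1)}} \;\gg\; C\,v^{2p-2} \;\geq\; 2(\partial_{v}Q)^{2}
\]
for $T$ sufficiently small and independent of $L,\delta,R$, which closes the argument.

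The main obstacle will be the careful derivation of the second-order estimates for the characteristics: unlike the first-order bounds, these require tracking the coupling between the two ODEs in \eqref{the original characteristics} and verifying that the potentially negative contributions $mX^{m-1}\partial_{v}^{2}X$ and $pV^{p-1}\partial_{v}^{2}V$ cannot cancel the leading positive term $p(p-1)V^{p-2}(\partial_{v}V)^{2}$. The smallness of $t$ (equivalently, the factor $t^{m/(m-1)}$ produced by the scaling \eqref{App2.1}) is what ultimately provides this cancellation-protection and drives the whole estimate.
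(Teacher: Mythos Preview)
Your overall architecture is the same as the paper's: reduce to $Q\partial_v^2 Q>2(\partial_v Q)^2$, bound $|\partial_v Q|$ by $Cv^{p-1}$ in the regime \eqref{App2.1}, and show $\partial_v^2 Q\geq c\,v^{p-2}$ so that the factor $Q\geq c\,v^p t^{-m/(m-1)}$ wins for small $t$. The gap is in the step where you absorb the second--order cross terms. With the bounds you quote, namely $|\partial_v^2 X|\leq Cv^{\alpha-2}t$ and $|\partial_v^2 V|\leq Cv^{-1}$, the contributions are \emph{not} of order $v^{p-2}t^{m/(m-1)}$ as you claim. Indeed, using $x^{m-1}t\asymp v^{\alpha(m-1)}$ (which follows from $v^\alpha\asymp xt^{1/(m-1)}$) one finds
\[
|X^{m-1}\partial_v^2 X|\;\lesssim\; x^{m-1}v^{\alpha-2}t\;\asymp\; v^{\alpha(m-1)}v^{\alpha-2}\;=\;v^{p-2},
\qquad
|V^{p-1}\partial_v^2 V|\;\lesssim\; v^{p-2},
\]
i.e.\ exactly the same order as the positive term $p(p-1)V^{p-2}(\partial_vV)^2\asymp v^{p-2}$. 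With constants you declared independent of $L,\delta,R$, there is no mechanism left to make them smaller than the lower constant $c$, and $\partial_v^2 Q\geq \tfrac{c}{2}v^{p-2}$ does not follow.

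The paper closes this in two different ways for the two terms. For the $X$--term it does not rely on a magnitude bound at all: it proves the \emph{sign} result $\partial_v^2 X\geq 0$ (via $\partial_v^2 X=\alpha(1-\alpha)\int_0^tV^{\alpha-2}(\partial_vV)^2-\alpha\int_0^tV^{\alpha-1}\partial_v^2V$ and the smallness of $\partial_v^2V$), so $mX^{m-1}\partial_v^2 X\geq 0$ and there is nothing to absorb. For the $V$--term it derives the sharper estimate $|\partial_v^2 V|\leq Cv^{\gamma-\alpha-2}$; since $\gamma<1+\alpha$ the extra factor $v^{\gamma-\alpha-1}$ is $\leq R^{\gamma-\alpha-1}$, which is made as small as one likes by taking $R$ large. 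Thus the smallness that saves the absorption comes from $R\to\infty$ (and the sign of $\partial_v^2X$), not from $t\to 0$; the small--$t$ gain is used only in the final comparison $Q\partial_v^2Q\gg (\partial_vQ)^2$, exactly as you set up.
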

\begin{proof}
Let $Q(x,v,t)=1+|X|^{m}+V^{p}$. Since   $G_{L}=\frac{1}{Q}$, we have that 
\begin{align}
    \partial_{v}Q(x,v,t)&=m|X|^{m-2}X\partial_{v}X+pV^{p-1}\partial_{v}V,\label{App2.2}\\
\partial^{2}_{v}Q(x,v,t)&=m(m-1)|X|^{m-2}|\partial_{v}X|^{2}+p(p-1)V^{p-2}(\partial_{v}V)^{2}+m|X|^{m-2}X\partial^{2}_{v} X+p V^{p-1}\partial^{2}_{v} V,\label{second order for q}
\end{align}
and
\begin{align}\label{main proposition}
\partial_{v}^{2}G_{L}=2\frac{|\partial_{v}Q|^{2}}{Q^{3}}-\frac{\partial_{v}^{2}Q}{Q^{2}}=\frac{2|\partial_{v}Q|^{2}-Q\partial_{v}^{2}Q}{Q^{3}}.
\end{align}

It is worthwhile to notice that if $v$ is as in (\ref{App2.1}), then $x\geq (1+2\delta)v^{\alpha}t$ if $t$ is sufficiently small and thus (\ref{dvVestimate}) holds. We analyze first the term $2|\partial_{v}Q|^{2}$ in (\ref{main proposition}). Since $v^{\alpha}\leq K_{max} xt^{\frac{1}{m-1}}$, we have that $0\leq x-(1+\delta)v^{\alpha}t\leq X\leq x-(1-\delta)v^{\alpha}t\leq x$ and  $x>0$. Using Proposition \ref{properties of the characteristics} and \eqref{App2.1} we deduce that
\begin{align*}
    |\partial_{v}Q|\leq C_{1}x^{m-1}v^{\alpha-1}t+C_{2}v^{p-1}\leq \frac{Cx^{m}}{v}t^{\frac{m}{m-1}},
\end{align*}
for some constants $C_{1},C_{2},C>0$, and thus
     \begin{align}
  2|\partial_{v}Q(x,v,t)|^{2}\leq  \frac{Cx^{2m}}{v^{2}}t^{\frac{2m}{m-1}}.\label{b2}
\end{align}
 We then analyze the terms in (\ref{second order for q}). We have $|X|^{m-2} |\partial_v X|^2 \geq 0$ and from Proposition \ref{properties of the characteristics} we know 
 \begin{align}\label{App 2.5}
     V^{p-2}(\partial_{v}V)^{2}
     \geq \frac{1}{C}v^{p-2}.
\end{align}
 We are going to show that if $t$ is sufficiently small  and $v$ as in \eqref{App2.1} we have 
 \begin{align}\label{App2.4}
  \partial^2_v X (x,v,t) \geq 0
 \end{align}
and that for any $\epsilon >0$ and $v$ as in \eqref{App2.1} we have 
\begin{align}
    |V^{p-1}\partial^{2}_{v}V|&\leq \epsilon v^{p-2} 
    \label{App 2.3}
\end{align} 
if $R$ is sufficiently large and $t$ is sufficiently small. Combining then (\ref{App 2.5})-(\ref{App 2.3}) with (\ref{App2.1}) and noticing that $Q \geq |X|^m\geq (x-(1+\delta)v^{\alpha}t)^{m}\geq \frac{x^{m}}{2^{m}}$ if $t$ is sufficiently small, it follows that 
\begin{align}\label{b2 2}
   Q\partial_{v}^{2}Q\geq \frac{x^{m}}{C} v^{p-2}\geq \frac{x^{2m}}{Cv^{2}}t^{\frac{m}{m-1}}.
\end{align}
(\ref{main proposition}), (\ref{b2}) and (\ref{b2 2}) imply that we can find a sufficiently small  $T\in(0,1)$  such that (\ref{monotonicity in v max}) holds for all $t\leq T$.

In all the following computations, we will assume for simplicity that $v\geq 2R$, with $R$ as in Proposition  \ref{properties of the characteristics}. The rest of the cases can be proved using similar computations.

\medskip
{\bf Auxiliary result:} 
Given $L>0$ and $\delta\in\big(0,\frac{1}{2}\big)$ there exists a sufficiently large $R>0$ such that for all $t\in[0,T]$, with $T$ sufficiently small, which is independent of $L,\delta,$ and $R$, it holds that 
\begin{align}\label{App2.6}
|\partial^{2}_{v}X(x,v,t)|\leq C v^{\alpha-2}t
\end{align}
if $v$ is as in (\ref{App2.1}).

In order to show \eqref{App2.6} we first prove that there exists  $C>0$ such that 
 \begin{align}\label{der in vv of phi}
|\partial^{2}_{v}\Phi(z,v)|\leq C v^{-1}, \textup{ for } v\geq 2R,
\end{align}
 where $\Phi$ is as (\ref{Phiequation}). We differentiate two times with respect to $v$ in (\ref{Phi-2}) in order to  deduce that 
\begin{align*}    \partial^{2}_{v}\Phi(z,v)&=(\gamma{-}\alpha)\bigg(v^{1-(\gamma-\alpha)}+(1-(\gamma {-}\alpha))z\bigg)^{\frac{2(\gamma-\alpha)-1}{1-(\gamma-\alpha)}}v^{-2(\gamma-\alpha)}\\
    &\qquad -(\gamma{-}\alpha)(v^{1-(\gamma-\alpha)}+(1-(\gamma{-}\alpha))z)^{\frac{\gamma-\alpha}{1-(\gamma{-}\alpha)}}v^{-(\gamma-\alpha)-1}.
    \end{align*}
Since $0\leq z \leq C$ and we are in the case when $v\geq 2R$, we can choose $R>0$ sufficiently large such that 
$    v^{1-(\gamma-\alpha)}\leq v^{1-(\gamma-\alpha)}+(1-(\gamma-\alpha))z\leq 2 v^{1-(\gamma-\alpha)}$
and thus $|\partial^{2}_{v}\Phi(z,v)|\leq C v^{-1}$.

In order to prove \eqref{App2.6} we differentiate (\ref{tintegral}) twice with respect to $v$ and obtain 
\begin{align}\label{two diff in x}
&\frac{\partial^{2}_{v}X(x,v,t) }{\Phi(\psi(X(x,v,t))-\psi(x),v)^{\alpha}}-\alpha\frac{\partial_{v}X(x,v,t)[\partial_{v}\Phi+\partial_{z}\Phi \psi'(X)\partial_{v}X] }{\Phi(\psi(X(x,v,t))-\psi(x),v)^{\alpha+1}}\nonumber\\
&=  \alpha\int_{x}^{X} \frac{\partial^{2}_{v}\Phi\der \xi}{\Phi(\psi(\xi)-\psi(x),v)^{\alpha+1}}+ \alpha(\alpha+1)\int_{X}^{x} \frac{(\partial_{v}\Phi)^{2}\der \xi}{\Phi(\psi(\xi)-\psi(x),v)^{\alpha+2}}\nonumber\\
&\qquad +  \frac{\alpha\partial_{v}\Phi}{\Phi(\psi(X)-\psi(x),v)^{\alpha+1}}\partial_{v}X.
\end{align}
From (\ref{der in vv of phi}), \eqref{Xestimate} and \eqref{Vestimate}  we deduce that 
\begin{align}\label{estimate 1 of 4}
\bigg|\int_{x}^{X} \frac{\partial^{2}_{v}\Phi\der \xi}{\Phi(\psi(\xi)-\psi(x),v)^{\alpha+1}}\bigg|\leq Cv^{-\alpha-2}[x-X]\leq C v^{-2}t.
  \end{align}
Furthermore,  Proposition \ref{properties of the characteristics} implies that
  \begin{align}\label{estimate 2 of 4}
\frac{|\partial_{v}\Phi\partial_{v}X|}{\Phi(\psi(X)-\psi(x),v)^{\alpha+1}}&\leq C v^{-2}t\,,\\
\label{estimate 3 of 4}
\int_{X}^{x} \frac{(\partial_{v}\Phi)^{2}\der \xi}{\Phi(\psi(\xi)-\psi(x),v)^{\alpha+2}}\leq Cv^{-\alpha-2}[x-X]&\leq C v^{-2}t
\end{align}
and that
%\begin{align}\label{estimate 4 of 4}
$\frac{1}{C} v^{\alpha} \leq \Phi(\psi(X)-\psi(x),v)^{\alpha}\leq C v^{\alpha}.$
%\end{align}

We claim that  if $v$ satisfies \eqref{App2.1}  it holds 
\begin{align}\label{the annoying term}
 0\leq \frac{\partial_{z}\Phi \psi'(X)|\partial_{v}X|^{2}}{\Phi(\psi(X)-\psi(x),v)^{\alpha+1}}\leq C v^{-2}t.
\end{align}
Indeed, Proposition  \ref{properties of the characteristics} implies that $|\partial_{v}X|\leq C  v^{\alpha-1}t$ and  $\frac{1}{\Phi(\psi(X)-\psi(x),v)^{\alpha+1}}\leq C v^{-\alpha-1}$.

We use \eqref{App2.1}  again to deduce that we are in the region where $x-(1+2\delta)v^{\alpha}t\geq 0$ for sufficiently small $t$. From the estimate (\ref{needed for the appendix}) we obtain that
\begin{align}\label{this is changed}
0\leq \partial_{z}\Phi \psi'(X)|\partial_{v}X|\leq C.
\end{align}
Thus, (\ref{the annoying term}) follows.

%If $\gamma\leq 1$, we notice that since $v\geq R$ and $t$ is sufficiently small, we obtain that
%\begin{align}
%0\leq v^{\gamma-1}t \psi'(X)\leq \frac{Cv^{\gamma-1}}{1+|X|^{m-d}}\leq C.
%\end{align}Thus, it holds that 
%\begin{align*}
% 0\leq \frac{\partial_{z}\Phi \psi'(X)|\partial_{v}X|^{2}}{\Phi(\psi(X)-\psi(x),v)^{\alpha+1}}\leq Cv^{\gamma-1}t\psi'(X)v^{-2}t\leq C v^{-2}t.
%\end{align*}
Combining the estimates (\ref{estimate 1 of 4})-(\ref{the annoying term}) and then making use of (\ref{two diff in x}), we obtain \eqref{App2.6}.

\medskip
{\bf Proof of \eqref{App 2.3}:}

 We only look at the case when $\gamma>1$. The case $\gamma\in[0,1]$ can be proved in a similar manner. As before, we assume in the following for simplicity that the constant $L$ in (\ref{the original characteristics}) is $L=1$. 
 If $\gamma\in(1,1+\alpha)$, then by integrating in \eqref{the original characteristics} it follows that
  \begin{align}\label{main equlity for volume}
      V(x,v,t)^{1-\gamma}-v^{1-\gamma}=(\gamma-1)\int_{0}^{t}\frac{\der \xi}{1+|X(x,v,\xi)|^{m-d}}
  \end{align}
such that 
\begin{align}\label{form of v}
      V(x,v,t)=\frac{v}{\bigg(1+(\gamma-1)v^{\gamma-1}\int_{0}^{t}\frac{\der \xi}{1+|X|^{m-d}}\bigg)^{\frac{1}{\gamma-1}}}.
  \end{align}
  Differentiating in $v$, we obtain that 
     \begin{align}\label{derivate in v of vv}
     \partial_{v} V(x,v,t)&=\frac{1}{\bigg(1+(\gamma-1)v^{\gamma-1}\int_{0}^{t}\frac{\der \xi}{1+|X|^{m-d}}\bigg)^{\frac{1}{\gamma-1}}}-\frac{(\gamma-1)v v^{\gamma-2}\int_{0}^{t}\frac{\der \xi}{1+|X|^{m-d}}}{\bigg(1+(\gamma-1)v^{\gamma-1}\int_{0}^{t}\frac{\der \xi}{1+|X|^{m-d}}\bigg)^{\frac{\gamma}{\gamma-1}}}\\
     &+\frac{v v^{\gamma-1}\int_{0}^{t}\frac{(m-d)|X|^{m-d-2}X\partial_{v}X\der \xi}{(1+|X|^{m-d})^{2}}}{\bigg(1+(\gamma-1)v^{\gamma-1}\int_{0}^{t}\frac{\der \xi}{1+|X|^{m-d}}\bigg)^{\frac{\gamma}{\gamma-1}}}
     =:\frac{T_{1}}{T_{2}^{\frac{\gamma}{\gamma-1}}},\nonumber
  \end{align}
  where
  \begin{align*}
      T_{1}:=1+v^{\gamma}\int_{0}^{t}\frac{(m-d)|X|^{m-d-2}X\partial_{v}X\der \xi}{(1+|X|^{m-d})^{2}}
  \end{align*}
  and
\begin{align*}
      T_{2}:=1+(\gamma-1)v^{\gamma-1}\int_{0}^{t}\frac{\der \xi}{1+|X|^{m-d}}.
  \end{align*}
%In the case that  $\gamma\in(0,1)$ we proceed similarly to obtain 
   % \begin{align}\label{form of v small gamma}
      %V(x,v,t)=\bigg(v^{1-\gamma}-(1-\gamma)\int_{0}^{t}\frac{\der \xi}{1+|X|^{m-d}}\bigg)^{\frac{1}{1-\gamma}}
 % \end{align}
 % and thus 
  %   \begin{align}\label{derivate in v of vv small gamma}
   %       \partial_{v} V(x,v,t)&=v^{-\gamma}\Bigg(v^{1-\gamma}-(1-\gamma)\int_{0}^{t}\frac{\der \xi}{1+|X|^{m-d}}\Bigg)^{\frac{\gamma}{1-\gamma}}\Bigg(1+v^{\gamma}\int_{0}^{t}\frac{(m-d)|X|^{m-d-2}X\partial_{v}X\der \xi}{(1+|X|^{m-d})^{2}}\Bigg)\nonumber\\
    %      &=\Bigg(1-\int_{0}^{t}\frac{(1-\gamma)v^{\gamma-1}\der \xi}{1+|X|^{m-d}}\Bigg)^{\frac{\gamma}{1-\gamma}}\Bigg(1+v^{\gamma}\int_{0}^{t}\frac{(m-d)|X|^{m-d-2}X\partial_{v}X\der \xi}{(1+|X|^{m-d})^{2}}\Bigg).          
     %\end{align}
%From (\ref{we need this}) and (\ref{we thus deduce that}), we can conclude as before that \eqref{dvVestimate} holds.
 
  Differentiating in $v$, we obtain that 
  \begin{align*}
   \partial^{2}_{v} V(x,v,t)
     =\frac{\partial_{v}T_{1}}{T_{2}^{\frac{\gamma}{\gamma-1}}}-\frac{\gamma}{\gamma-1}\frac{T_{1}\partial_{v}T_{2}}{T_{2}^{\frac{2\gamma-1}{\gamma-1}}}.
  \end{align*}
  It holds that
  \begin{align*}
      \partial_{v}T_{1}&=\gamma v^{\gamma-1}\int_{0}^{t}\frac{(m-d)|X|^{m-d-2}X\partial_{v}X\der \xi}{(1+|X|^{m-d})^{2}}-2v^{\gamma}\int_{0}^{t}\frac{\big|(m-d)|X|^{m-d-2}X\partial_{v}X\big|^{2}\der \xi}{(1+|X|^{m-d})^{3}}\\
      &\qquad+v^{\gamma}\int_{0}^{t}\frac{(m-d)|X|^{m-d-2}X\partial^{2}_{v}X\der \xi}{(1+|X|^{m-d})^{2}}+v^{\gamma}\int_{0}^{t}\frac{(m-d)(m-d-1)|X|^{m-d-2}|\partial_{v}X|^{2}\der \xi}{(1+|X|^{m-d})^{2}}.
  \end{align*}
  Moreover, we have that
  \begin{align*}
\partial_{v}T_{2}=(\gamma-1)^{2}v^{\gamma-2}\int_{0}^{t}\frac{\der \xi}{1+|X|^{m-d}}-(\gamma-1)v^{\gamma-1}\int_{0}^{t}\frac{(m-d)|X|^{m-d-2}X\partial_{v}X\der \xi}{(1+|X|^{m-d})^{2}}.
  \end{align*}

 We  first prove the following estimates:
   \begin{align}\label{we need this}
0\leq v^{\gamma-1}\int_{0}^{t}\frac{\der \xi}{1+|X(x,v,\xi)|^{m-d}}\leq \frac{1}{2}
\end{align}
and
\begin{align}\label{we thus deduce that}
   v^{\gamma}\bigg|\int_{0}^{t}\frac{(m-d)|X(x,v,\xi)|^{m-d-2}X(x,v,\xi)\partial_{v}X(x,v,\xi)\der \xi}{(1+|X(x,v,\xi)|^{m-d})^{2}} \bigg|\leq \frac{1}{2}.
\end{align}

 We first prove that (\ref{we need this}) holds. For sufficiently small $t$, we are in the case when $x\geq(1+2\delta)v^{\alpha}t$ since $v$ is as  in (\ref{App2.1}).  We use \eqref{Xestimate} and the fact that $x\geq (1+2\delta)v^{\alpha}t\geq(1+2\delta)v^{\alpha}s$ for all $s\in[0,t]$. Thus,  $|X(s)|\geq |x-(1+\delta)v^{\alpha}s|$ when $x\geq(1+2\delta)v^{\alpha}t$, for all $s\in[0,t]$. It follows that 
\begin{align}\label{we need this for later part 1}
v^{\gamma-1}\int_{0}^{t}\frac{\der \xi}{1+|X(x,v,\xi)|^{m-d}}&\leq C v^{\gamma-1}\int_{0}^{t}\frac{\der \xi}{1+|x-(1+\delta)v^{\alpha}\xi|^{m-d}}\nonumber\\
 &\leq C v^{\gamma-\alpha-1}\int_{0}^{v^{\alpha}t}\frac{\der z}{1+|x-(1+\delta)z|^{m-d}}\leq C v^{\gamma-\alpha-1},
\end{align}
which, since $v\geq R$ and $\gamma<\alpha+1$, implies \eqref{we need this} if $R$ is sufficiently large.

%We first consider the case when $x\leq 0$. Using \eqref{dvXestimate} it follows that 
%\begin{align}
%v^{\gamma}\bigg|\int_{0}^{t}\frac{(m-d)|X|^{m-d-2}X\partial_{v}X\der \xi}{(1+|X|^{m-d})^{2}}\bigg|&\leq C     v^{\gamma+\alpha-1}\int_{0}^{t}\frac{|X|^{m-d-1} \xi\der \xi}{(1+|X|^{m-d})^{2}}\nonumber\\
    %&\leq C v^{\gamma+\alpha-1} \int_{0}^{t}\frac{|x-v^{\alpha}\xi|^{m-d-1}\xi\der \xi}{(1+|x-v^{\alpha}\xi|^{m-d})^{2}}.\label{change change}
%\end{align}
%By making the change of variable $z=v^{\alpha}\xi$ in (\ref{change change}), we further obtain that
%\begin{align*}
 %v^{\gamma}\bigg|\int_{0}^{t}\frac{(m-d)|X|^{m-d-2}X\partial_{v}X\der \xi}{(1+|X|^{m-d})^{2}}\bigg| &\leq C  v^{\gamma-\alpha-1}\int_{0}^{v^{\alpha}t}\frac{|x-z|^{m-d-1}z\der z}{(1+|x-z|^{m-d})^{2}}\\
 %&= C  v^{\gamma-\alpha-1}\int_{0}^{v^{\alpha}t}\frac{(|x|+z)^{m-d-1}z\der z}{(1+(|x|+z)^{m-d})^{2}}\\
%&\leq  C  v^{\gamma-\alpha-1}\int_{0}^{v^{\alpha}t}\frac{(|x|+z)^{m-d}\der z}{(1+(|x|+z)^{m-d})^{2}}\leq C v^{\gamma-\alpha-1}
%\end{align*}
%and conclude \eqref{we thus deduce that} by taking $R$ sufficiently large.

We now prove (\ref{we thus deduce that}). As before, we consider the case when $x\geq (1+2\delta)v^{\alpha}t$ since we can choose $t$ sufficiently small.  It follows that 
\begin{align*}
v^{\gamma}\bigg|\int_{0}^{t}\frac{(m-d)|X|^{m-d-2}X\partial_{v}X\der \xi}{(1+|X|^{m-d})^{2}}\bigg|&\leq Cv^{\gamma-1}\int_{0}^{t}\frac{|x-(1-\delta)v^{\alpha}\xi|^{m-d-1}v^{\alpha}\xi\der \xi}{(1+|x-(1+\delta)v^{\alpha}\xi|^{m-d})^{2}}.
\end{align*}
If $v$ is as in (\ref{App2.1}), then $x-(1+\delta)z\geq x-2z\geq \frac{x}{2}$ for sufficiently small $t$, for $z\in[0,v^{\alpha}t]$. By making the change of variables $z=v^{\alpha}\xi$ and using the fact that $x-(1-\delta)z\leq x$ we further obtain that
\begin{align*}
 v^{\gamma-1}\int_{0}^{t}\frac{|x-(1-\delta)v^{\alpha}\xi|^{m-d-1}v^{\alpha}\xi\der \xi}{(1+|x-(1+\delta)v^{\alpha}\xi|^{m-d})^{2}}&\leq C v^{\gamma-\alpha-1} \int_{0}^{v^{\alpha}t}\frac{|x-(1-\delta)z|^{m-d-1}z\der z}{(1+|x-(1+\delta)z|^{m-d})^{2}}\\
&\leq C v^{\gamma-\alpha-1} \int_{0}^{\frac{x}{1+2\delta}}\frac{|x|^{m-d}\der z}{(1+|x|^{m-d})^{2}}\leq  \frac{C v^{\gamma-\alpha-1} |x|^{m-d+1}}{(1+|x|^{m-d})^{2}}\\
&\leq  C v^{\gamma-\alpha-1},
\end{align*}

and thus  (\ref{we thus deduce that})  follows if we take $R$ sufficiently large since $v\geq R$ and $\gamma<\alpha+1$.

From (\ref{we need this}) and (\ref{we thus deduce that}), it holds that $T_{1}\in[\frac{1}{2},1]$  and
$T_{2}\in[\frac{1}{2},\frac{3}{2}]$. From this we deduce that
\begin{align*}
|\partial^{2}_{v}V(x,v,t)|\leq C \big( |\partial_{v}T_{1}|+|\partial_{v}T_{2}|\big)\,.
\end{align*}
We analyze each term of $\partial_{v}T_{1}$ separately. Following the computations for (\ref{we thus deduce that}), we deduce that
\begin{align}\label{1 out of 6}
    v^{\gamma-1}\bigg|\int_{0}^{t}\frac{(m-d)|X|^{m-d-2}X\partial_{v}X\der \xi}{(1+|X|^{m-d})^{2}}\bigg|\leq C   v^{\gamma-\alpha-1}v^{-1}.
\end{align}

Similarly, by making the change of variables $z=v^{\alpha}\xi$, it follows that
\begin{align}\label{2 out of 6}
v^{\gamma}\int_{0}^{t}\frac{\big||X|^{m-d-2}X\partial_{v}X\big|^{2}\der \xi}{(1+|X|^{m-d})^{3}}&\leq C v^{\gamma}\int_{0}^{t}\frac{\big(|x-(1-\delta)v^{\alpha}\xi|^{m-d-1}v^{\alpha-1}\xi\big)^{2}\der \xi}{(1+|x-(1+\delta)v^{\alpha}\xi|^{m-d})^{3}}\nonumber\\
&\leq C v^{\gamma-\alpha-2}\int_{0}^{v^{\alpha}t}\frac{\big(|x-(1-\delta)z|^{m-d-1}z\big)^{2}\der z}{(1+|x-(1+\delta)z|^{m-d})^{3}} \leq C v^{\gamma-\alpha-1}v^{-1}.
\end{align}

Moreover, \eqref{App2.6} implies  that
\begin{align}\label{3 out of 6}
v^{\gamma}\bigg|\int_{0}^{t}\frac{(m-d)|X|^{m-d-2}X\partial^{2}_{v}X\der \xi}{(1+|X|^{m-d})^{2}}\bigg|&\leq  C v^{\alpha-2}v^{\gamma}\int_{0}^{t}\frac{(m-d)|X|^{m-d-1}\xi\der \xi}{(1+|X|^{m-d})^{2}}\nonumber\\
&\leq C v^{-\alpha-2}v^{\gamma}\int_{0}^{v^{\alpha}t}\frac{|x-(1-\delta)z|^{m-d-1}z\der z}{(1+|x-(1+\delta)z|^{m-d})^{2}}\nonumber\\
&\leq C v^{\gamma-\alpha-1}v^{-1}.
\end{align}

We now analyze the last term in $\partial_{v}T_{1}$. We have that
\begin{align}\label{4 out of 6}
    v^{\gamma}\int_{0}^{t}\frac{|X|^{m-d-2}|\partial_{v}X|^{2}\der \xi}{(1+|X|^{m-d})^{2}}\leq C v^{\gamma-\alpha-2}\int_{0}^{v^{\alpha}t}\frac{|x-(1-\delta)z|^{m-d-2}z^{2}\der z}{(1+|x-(1+\delta)z|^{m-d})^{2}}\leq C  v^{\gamma-\alpha-1}v^{-1}.
\end{align}

We continue by finding upper bounds for each term in $\partial_{v}T_{2}$. From (\ref{we need this for later part 1}), it holds that 
\begin{align}\label{5 out of 6}
    v^{\gamma-2}\int_{0}^{t}\frac{\der \xi}{1+|X|^{m-d}}\leq C v^{\gamma-\alpha-1}v^{-1},
\end{align}
which offers the right estimate in order to prove  \eqref{App 2.3}. Notice that the last term in $\partial_{v}T_{2}$, which is $v^{\gamma-1}\int_{0}^{t}\frac{|X|^{m-d-2}X\partial_{v}X\der \xi}{(1+|X|^{m-d})^{2}}$, has already been analyzed in (\ref{1 out of 6}).

Combining (\ref{1 out of 6})-(\ref{5 out of 6}) together with the fact that $v\geq R$ and that $\gamma<\alpha+1$, we obtain that
\begin{align*}
    |v^{p-1}\partial_{v}^{2}V(x,v,t)|\leq C  v^{\gamma-\alpha-2}v^{p-1}\leq \epsilon v^{p-2},
\end{align*}
if $v \geq R$ is sufficiently large. This  concludes the proof of \eqref{App 2.3}.

\medskip
{\bf Proof of \eqref{App2.4}:}

 From  (\ref{the original characteristics}) it follows that
 \begin{align}\label{how does x behave}
\partial_{v}^{2}X(x,v,t)=\alpha(1-\alpha)\int_{0}^{t}V^{\alpha-2}(\partial_{v}V)^{2}\der s-\alpha\int_{0}^{t}V^{\alpha-1}\partial_{v}^{2}V\der s.
 \end{align}
 Now \eqref{App 2.3} implies that the second term on the right hand side can be absorbed into the first one and thus \eqref{App2.4} follows.
\end{proof}

\subsection*{Acknowledgements}
 The authors gratefully acknowledge the financial support of the collaborative
research centre The mathematics of emerging effects (CRC 1060, Project-ID 211504053) and of the Bonn International Graduate School of Mathematics at the Hausdorff Center for Mathematics (EXC 2047/1, Project-ID 390685813) funded through the Deutsche Forschungsgemeinschaft (DFG, German Research Foundation).

\subsubsection*{Statements and Declarations}

\textbf{Conflict of interest} The authors declare that they have no conflict of interest.

\textbf{Data availability} Data sharing not applicable to this article as no datasets were generated or analysed during the current study.

\printbibliography[heading=bibintoc]

\end{document}